\documentclass{article}
\usepackage[]{geometry}
\usepackage[mathcal]{euscript}
\usepackage{bm,url}                     
\usepackage{amsfonts}
\usepackage{amssymb}
\usepackage{amsmath}
\usepackage{amsthm}
\usepackage{graphicx}               
\usepackage{natbib}                 
\usepackage{colortbl}  
\usepackage{todonotes}             
\usepackage{booktabs}
\newtheorem{definition}{Definition}
\newtheorem{remark}{Remark}
\newtheorem{theorem}{Theorem}

\newtheorem{proposition}{Proposition}
\newtheorem{corollary}{Corollary}

\newcommand{\R}[1]{\mathbb{R}^{#1}}

\usepackage[english]{babel}
\usepackage{amsmath}
\usepackage{amsfonts}
\usepackage{amssymb}
\usepackage{graphicx}
\usepackage{color}
\usepackage{array}
\usepackage{mathrsfs}
\usepackage{hyperref}

\newcommand{\eps}{\varepsilon}

\newcommand{\M}{\mathcal{M}}
\newcommand{\diam}{{\rm diam}}
\newcommand{\peel}{{\rm peel}}

\definecolor{violet}{rgb}{0.7,0,0.6}
\definecolor{OliveGreen}{RGB}{85,107,47}


\begin{document}

\begin{center}
	\Large \bf  Detection of low dimensionality and data denoising via set estimation techniques\normalsize
\end{center}
\normalsize

\

\begin{center}
	Catherine Aaron$^a$, Alejandro Cholaquidis$^b$
	and Antonio Cuevas$^c$\\
	$^a$ Universit\'e Blaise-Pascal Clermont II, France\\
	$^b$ Centro de Matem\'atica, Universidad de la Rep\'ublica, Uruguay\\
	$^c$ Departamento de Matem\'aticas, Universidad Aut\'onoma de Madrid
\end{center}

\begin{abstract}
This work is closely related to the theories of set estimation and manifold estimation. 
Our object of interest is a, possibly lower-dimensional, compact set $S
\subset {\mathbb R}^d$.
The general aim is to identify (via stochastic procedures) some qualitative or quantitative features of 
$S$, of geometric or topological character. The available information is just
a random sample of points drawn on $S$. 
The term ``to identify'' means here to achieve a correct answer almost surely (a.s.) when the sample size
tends to infinity. More specifically the paper aims at giving some partial answers to the following questions: is $S$ full dimensional? Is $S$ ``close to a lower dimensional set'' $\M$? If so, can we estimate $\M$ or some functionals of $\M$ (in particular, the Minkowski content of $\M$)?  As an important auxiliary tool in the answers of these questions, a denoising procedure is proposed in order to partially remove the noise in the original data.
The theoretical results are complemented with some simulations and graphical illustrations.
\end{abstract}

\section{Introduction}\label{sec-int}
\noindent \textit{The general setup and some related literature}.  The emerging statistical field currently known as
\it manifold estimation\/ \rm (or, sometimes, \textit{statistics on manifolds}, or \it manifold learning\rm) 
is the result of the confluence of, at least, three classical theories: (a) the analysis of directional (or circular)
data \cite{mar00}, \cite{bha14} where the aims are similar to those of the classical statistics but the data are 
supposed to be drawn on the sphere or, more generally, on a lower-dimensional manifold; (b) the study of non-linear
methods of dimension reduction, \cite{del01}, \cite{has89}, aiming at recovering a lower-dimensional structure from 
random points taken around it, and (c) some techniques of stochastic geometry \cite{cha05} and set estimation 
\cite{cue10}, \cite{chola:14}, \cite{cue07} whose purpose is to estimate some relevant quantities of a set (or the 
set itself) from the information provided by a random sample whose distribution is closely related to the set.

There are also strong connections with the theories of persistent homology and computational topology, \cite{car09},
\cite{sma11}, \cite{fas14},  \cite{cav:15}.

In all these studies, from different points of view, the general aim is  similar: one wants to get  information 
(very often of geometric or topological type) on a set  from a sample of points.  To be more specific, let us mention
some  recent references on these topics, roughly grouped according the subject (the list is largely non-exhaustive):

\ \ \ \ Manifold recovery from a sample of points, \cite{gen12b,gen12c}.

\ \ \ \ Inference on dimension, \cite{fef16}, \cite{bri13}.

\ \ \ \ Estimation of measures (perimeter, surface area, curvatures), \cite{cue07}, \cite{jim11}, \cite{ber14}.

\ \ \ \ Estimation of some other relevant quantities in a manifold, \cite{sma08}, \cite{che12}.

\ \ \ \ Dimensionality reduction, \cite{gen12a}, \cite{ten00}.

\

\noindent \textit{The problems under study. The contents of the paper}. We are interested in getting some information (in particular, regarding dimensionality and Minkowski content) about a compact set ${\mathcal M}\subset {\mathbb R}^d$. While the set ${\mathcal M}$ is typically unknown, we are supposed to have a random sample of points $X_1,\ldots,X_n$ whose distribution $P_X$ has a support ``close to ${\mathcal M}$''. To be more specific, we consider two different models: 
\begin{itemize}
	\item[] \textit{The noiseless model}: the support of $P_X$ is ${\mathcal M}$ itself;  \cite{aam15}, \cite{ame02}, \cite{chola:14}, \cite{cf97}.
	\item[] \textit{The parallel (noisy) model}: the support of $P_X$ is the parallel set $S$ of points within a distance to ${\mathcal M}$ smaller than $R_1$, for some $R_1>0$, where $\M$ is a $d'$-dimensional
	set and $d'\leq d$; \cite{ber14}. Note that other different models ``with noise'' are considered in \cite{gen12a}, \cite{gen12b} and \cite{gen12c}. 
\end{itemize}


In Section \ref{sec:lower} we first develop, under the noiseless model,  an algorithmic procedure to identify, eventually, almost surely (a.s.), whether or not 
$\M$ has an empty interior; this is achieved in Theorems \ref{Th:noiseless} and \ref{th:choosern} below. A positive answer would essentially entail (under some conditions, see the beginning of Section 3) that $\M$ has a dimension smaller than  that of the ambient space.   

Then, assuming the noisy model and $\mathring{{\mathcal M}}=\emptyset$ ( where $\mathring{\mathcal{M}}$ denotes the interior of $\mathcal{M}$) Theorems \ref{Th:noisy1} (i) and \ref{thnoise1} (i) provide two methods for the estimation of the maximum level of noise $R_1$, giving also the corresponding  convergence rates. If $R_1$ is known in advance, the  remaining  results in Theorems \ref{Th:noisy1}  and \ref{thnoise1} allow us also to decide whether or not the ``inside set'' ${\mathcal M}$ has an empty interior.

The identification methods are ``algorithmic'' in the sense that they are based  on automatic procedures to perform them 
with arbitrary precision. 
This will require to impose some regularity conditions on ${\mathcal M}$ or $S$. Section \ref{sec-back} 
includes all the relevant definitions, notations and basic geometric concepts we will need.


In Section \ref{sec:alg} we consider again the noisy model where the data are drawn on the $R_1$-parallel set around a lower dimensional set ${\mathcal M}$. We propose a method  
to ``denoise'' the sample, which essentially amounts to estimate $\M$ from sample data drawn around the parallel set $S$ around $\M$.

In Section \ref{sec:minkowski} we consider the problem of estimating the $d'$-dimensional Minkowski measure 
of ${\mathcal M}$ under both the noiseless and the noisy model. We assume throughout the section that the dimension $d'$ (in Hausdorff sense, see below) of the set $\M$ is known. 

Finally, in Section \ref{sec:comput} we present some simulations and numerical illustrations.

\section{Some geometric background}\label{sec-back}

This section is devoted to make explicit the notations, and basic concepts and definitions 
(mostly of geometric character) we will need in the rest of the paper. 

\

\noindent\textit{Some notation}.
 Given a set $S\subset \mathbb{R}^d$, we will denote by
$\mathring{S}$, $\overline{S}$, $\partial S$ and $S^c$, the interior, closure, boundary and complement of $S$ 
 respectively, with respect to the usual topology of $\mathbb{R}^d$.   Let us denote $d(y,S)=\inf_{x\in S}\Vert y-x\Vert$ for $y\in {\mathbb R}^d$, where $\Vert\cdot\Vert$ stands for the Euclidean norm. 
We will also denote $\rho(S)=\sup_{x\in S}d(x,\partial S)$. Notice that $\rho(S)>0$ is equivalent 
to $\mathring{S}\neq \emptyset$.

The parallel set of $S$ of radius $\varepsilon$ will be denoted as $B(S,\eps)$, that is
$B(S,\eps)=\{y\in{\mathbb R}^d:\ \inf_{x\in S}\break \Vert y-x\Vert\leq \eps \}$.
If $A\subset\mathbb{R}^d$ is a Borel set, then $\mu_d(A)$ (sometimes just $\mu(A)$) will denote 
its Lebesgue measure.
We will denote by $\mathcal{B}(x,\varepsilon)$ (or $\mathcal{B}_d(x,\eps)$, when necessary) the closed ball
in $\mathbb{R}^d$,
of radius $\varepsilon$, centred at $x$, and $\omega_d=\mu_d(\mathcal{B}_d(x,1))$. 
Given two compact non-empty sets $A,B\subset{\mathbb R}^d$, 
the \it Hausdorff distance\/ \rm or \it Hausdorff-Pompeiu distance\/ \rm between $A$ and $C$ is defined by
\begin{equation}
d_H(A,C)=\inf\{\eps>0: \mbox{such that } A\subset B(C,\eps)\, \mbox{ and }
C\subset B(A,\eps)\}.\label{Hausdorff}
\end{equation}

\noindent\textit{Some geometric regularity conditions for sets.}
The following conditions have been used many times in set estimation topics see, 
e.g., \cite{sma08}, \cite{gen12b}, \cite{cue10} and references therein.

\begin{definition} \label{rolling} 
	Let $S\subset \mathbb{R}^d$ be a closed set. 
	The set $S$ is said to satisfy the outside $r$-rolling condition if for each boundary point $s\in \partial S$ 
	there exists some $x\in S^c$ such that $\mathcal{B}(x,r)\cap \partial S=\{s\}$.
		A compact set $S$ is said to satisfy the inside $r$-rolling condition if $\overline{S^c}$ satisfies the outside 
	$r$-rolling condition at all boundary points.
\end{definition}

\begin{definition} \label{def-rconvexity} A set $S\subset \mathbb{R}^d$ is said to be $r$-convex, for $r>0$, if 
	$S=C_r(S),$ where
	\begin{equation} \label{rhull}
	C_r(S)=\bigcap_{\big\{ \mathring{\mathcal{B}}(x,r):\ \mathring{\mathcal{B}}(x,r)\cap S=\emptyset\big\}} \Big(\mathring{\mathcal{B}}(x,r)\Big)^c,
	\end{equation}
	is the $r$-convex hull of $S$.
	When $S$ is $r$-convex, a natural estimator of $S$ from a random sample $\mathcal{X}_n$ of points 
	(drawn on a distribution with support $S$), is $C_r(\mathcal{X}_n)$.
\end{definition}

Following the notation in \cite{fed59}, let ${\rm \text{Unp}}(S)$ be the set of points $x\in \mathbb{R}^d$ 
with a unique projection on $S$.
\begin{definition} \label{def-reach} For $x\in S$, let \emph{reach}$(S,x)=\sup\{r>0:\mathring{\mathcal{B}}(x,r)\subset {\emph{Unp}}(S)\big\}$. 
The reach of $S$ is defined by $\emph{reach}(S)=\inf\big\{\emph{reach}(S,x):x\in S\big\},$ and $S$
is said to be of positive reach if $\emph{reach}(S)>0$.
\end{definition}
The study of sets with positive reach was started by \cite{fed59}; see \cite{tha08} for a survey. 
This is now a major topic in different problems of 
manifold learning or topological data analysis. See, e.g., \cite{adl16} for a recent reference.

The conditions established in Definitions \ref{rolling}, \ref{def-rconvexity} and \ref{def-reach} have an obvious mutual affinity. In fact, they are collectively referred to as ``rolling properties'' in \cite{cue12}. However, they are not equivalent: if the reach of $S$ is $r$ then $S$ is $r$-convex, which in turn implies the (outer) $r$-rolling condition. The converse implications are not true in general; see \cite{cue12} for details.

\begin{definition}\label{def-stand}  A set $S\subset \mathbb{R}^d$ is said to be standard with respect to a
Borel measure $\nu$  at  a point $x$ 
	if there exists $\lambda>0$ and $\delta>0$ such that
	\begin{equation} \label{estandar}
	\nu(\mathcal{B}(x,\eps)\cap S)\geq \delta \mu_d(\mathcal{B}(x,\eps)),\quad 0<\eps\leq \lambda.
	\end{equation}
	A set $S\subset \mathbb{R}^d$ is said to be standard if \eqref{estandar}  holds  for all $x\in S$.
\end{definition}


The following results will be useful below. The first one establishes a simple connection between standardness and the inside $r$-rolling condition. 
The second one (whose proof can be found in \cite{pateiro:09}) relates the rolling condition with the reach property.

\begin{proposition} \label{prop-stand} Let $S\subset\mathbb{R}^d$ the support of a Borel measure $\nu$, whose density $f$
	with respect to the Lebesgue measure is bounded from below by $f_0$, if $S$ satisfies  $\text{reach}(\overline{S^c})\geq r$,  then it is standard 
	with respect to $\nu$, for any $\delta\leq  f_0/3$ and 	$\lambda=r$.
\begin{proof} Let  $0<\eps\leq r$ and $x\in S$, if $d(x,\partial S)\geq r$ the result is
	obvious. Let $x\in S$ such that $d(x,\partial S)<r$.  Since $\text{reach}(\overline{S^c})\geq r$  there exists $z\in \R{d}$ such that $x\in \mathcal{B}(z,r)\subset S$. Then, for all $\eps\leq r$
	$$\nu(\mathcal{B}(x,\eps)\cap S)\geq \nu(\mathcal{B}(x,\eps)\cap \mathcal{B}(z,r))\geq f_0 \mu_d(\mathcal{B}(x,\eps)\cap \mathcal{B}(z,r))\geq  \frac{f_0}{3}\mu_d(\mathcal{B}(x,\eps)).$$\end{proof}
\end{proposition}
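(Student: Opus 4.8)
The plan is to reduce the claim to a purely geometric statement about the volume of the intersection of two balls, $\mathcal{B}(x,\eps)$ and $\mathcal{B}(z,r)$, one of which contains the common centre-side point $x$. The measure-theoretic part is immediate: once we know $\mathcal{B}(z,r)\subset S$, monotonicity of $\nu$ and the density lower bound give
\[
\nu(\mathcal{B}(x,\eps)\cap S)\ \geq\ \nu\big(\mathcal{B}(x,\eps)\cap\mathcal{B}(z,r)\big)\ \geq\ f_0\,\mu_d\big(\mathcal{B}(x,\eps)\cap\mathcal{B}(z,r)\big),
\]
so the whole content of the proposition lies in showing $\mu_d(\mathcal{B}(x,\eps)\cap\mathcal{B}(z,r))\geq \tfrac13\,\mu_d(\mathcal{B}(x,\eps))$ whenever $x\in\mathcal{B}(z,r)$ and $\eps\leq r$. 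I would first dispose of the trivial case $d(x,\partial S)\geq r$, where $\mathcal{B}(x,\eps)\subset S$ outright and the bound holds with constant $1$.

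For the main case, the key step is to produce the ball $\mathcal{B}(z,r)\subset S$ through $x$. This is exactly where the hypothesis $\text{reach}(\overline{S^c})\geq r$ enters: by the outer rolling property of $\overline{S^c}$ (which, as noted in the excerpt after Definition~\ref{def-reach}, follows from positive reach), at the nearest boundary point $s\in\partial S$ to $x$ there is a ball of radius $r$ lying in $(\overline{S^c})^c\subset S$ and touching $\partial S$ only at $s$; since $d(x,\partial S)<r$, a short argument shows $x$ lies in the closed $r$-ball obtained by pushing this rolling ball so that its boundary passes through $x$ — more carefully, one takes $z$ on the inward normal ray from $s$ at distance $r$, checks $\|x-z\|\leq r$ using $\|x-s\|=d(x,\partial S)<r$ and collinearity, and invokes that the open ball $\mathring{\mathcal{B}}(z,r)$ avoids $\partial S$ hence lies entirely in $S$ (being connected to interior points of $S$). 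This gives the desired $z$.

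Finally I would bound $\mu_d(\mathcal{B}(x,\eps)\cap\mathcal{B}(z,r))$ from below by a fixed fraction of $\mu_d(\mathcal{B}(x,\eps))=\omega_d\eps^d$. The cleanest route: since $x\in\mathcal{B}(z,r)$, the intersection $\mathcal{B}(x,\eps)\cap\mathcal{B}(z,r)$ contains a half-ball of $\mathcal{B}(x,\eps)$ — namely the half cut off by the hyperplane through $x$ orthogonal to $z-x$, on the side of $z$ — because the larger ball $\mathcal{B}(z,r)$, having radius $r\geq\eps$ and containing $x$ on its boundary-or-interior, covers that entire half-ball (a point $y$ in that half-ball satisfies $\|y-z\|^2=\|y-x\|^2+2\langle y-x,x-z\rangle+\|x-z\|^2\leq \eps^2+\|x-z\|^2\leq r^2$ when $\langle y-x,x-z\rangle\le 0$ and... ). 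The worst case is when $x\in\partial\mathcal{B}(z,r)$, and even there one gets at least the half-ball, i.e.\ a factor $\tfrac12>\tfrac13$; the slack $\tfrac13$ versus $\tfrac12$ leaves room for the boundary-curvature correction if one prefers a more careful estimate. The main obstacle is the second step — rigorously extracting the interior ball $\mathcal{B}(z,r)\subset S$ through an arbitrary near-boundary point $x$ from the reach/rolling hypothesis, handling the possibility that the nearest point of $\partial S$ to $x$ is not unique and verifying the pushed ball stays inside $S$ rather than merely missing $\partial S$.
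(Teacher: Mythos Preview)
Your outline matches the paper's proof exactly: dispose of the deep case $d(x,\partial S)\geq r$, use $\text{reach}(\overline{S^c})\geq r$ to place a ball $\mathcal{B}(z,r)\subset S$ through $x$, then reduce to the volume inequality $\mu_d(\mathcal{B}(x,\eps)\cap\mathcal{B}(z,r))\geq\tfrac13\mu_d(\mathcal{B}(x,\eps))$. The paper simply asserts this last inequality; you attempt to prove it via a half-ball inclusion, and that is where your argument breaks.

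The half-ball claim is false in the extremal configuration $x\in\partial\mathcal{B}(z,r)$. Your own computation gives only $\|y-z\|^2\leq\eps^2+\|x-z\|^2$, and with $\|x-z\|=r$ this is $\eps^2+r^2$, not $\leq r^2$; the trailing ``and\ldots'' shows you noticed the chain does not close. Concretely, take $x=0$, $z=(r,0,\dots,0)$, $\eps=r$, and $y=(t,\sqrt{r^2-t^2},0,\dots,0)$ for small $t>0$: then $y$ lies in the half of $\mathcal{B}(x,\eps)$ on the $z$-side, yet $\|y-z\|^2=2r(r-t)>r^2$, so $y\notin\mathcal{B}(z,r)$. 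The intersection genuinely does not contain that half-ball.

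Worse, the constant $\tfrac13$ that both you and the paper aim for is not dimension-free. For $d=3$, $\|x-z\|=r$, $\eps=r$, the intersection of two balls of radius $r$ at distance $r$ is two spherical caps of height $r/2$, total volume $\tfrac{5\pi r^3}{12}$, which is $\tfrac{5}{16}<\tfrac13$ of $\omega_3 r^3=\tfrac{4\pi r^3}{3}$; the ratio deteriorates further as $d$ grows. So the inequality you are trying to justify is actually false at the endpoint $\eps=r$ for $d\geq3$. A correct version needs either a dimension-dependent constant --- e.g.\ $2^{-d}$ via the easy inclusion $\mathcal{B}\bigl(x+\tfrac{\eps}{2}\tfrac{z-x}{\|z-x\|},\,\tfrac{\eps}{2}\bigr)\subset\mathcal{B}(x,\eps)\cap\mathcal{B}(z,r)$ --- or a choice $\lambda<r$ so the boundary case is avoided. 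Your strategy is the right one; only the geometric volume bound needs a different (and weaker) estimate.
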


\begin{proposition}[Lemma 2.3 in \cite{pateiro:09}]\label{lembea} Let $S\subset\mathbb{R}^d$ be a non-empty closed set. If $S$ satisfies the inside and outside $r$-rolling condition, then $\text{reach}(\partial S)\geq r$.
	\end{proposition}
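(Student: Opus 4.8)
The plan is to prove the two inclusions defining $\mathrm{reach}(\partial S)\ge r$ by exploiting the rolling balls on each side of the boundary to build, around every point $y$ close to $\partial S$, a forbidden region that pins down the nearest point uniquely. Fix a boundary point $s\in\partial S$. By the outside $r$-rolling condition there is an open ball $\mathring{\mathcal B}(a,r)\subset S^c$ with $\overline{\mathcal B}(a,r)\cap\partial S=\{s\}$, and by the inside $r$-rolling condition (applied to $\overline{S^c}$, whose boundary is also $\partial S$) there is an open ball $\mathring{\mathcal B}(b,r)\subset \mathring S$ with $\overline{\mathcal B}(b,r)\cap\partial S=\{s\}$. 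Note $a$, $s$, $b$ are collinear with $\|a-s\|=\|b-s\|=r$ and $a-s$, $b-s$ point in opposite directions along the common normal-like direction at $s$; in particular the two open balls are disjoint, they are tangent to each other at $s$, and their union together with $\{s\}$ is $\mathcal B(a,r)\cup\mathcal B(b,r)$.

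The key geometric step: I claim every point $y$ on the open segment $(a,s)$ has $s$ as its unique nearest point of $\partial S$, and likewise for every $y\in(s,b)$. Indeed, suppose $y\in(a,s)$ and $d(y,\partial S)=\|y-s'\|$ for some $s'\in\partial S$; since $\mathring{\mathcal B}(a,r)\cap\partial S=\emptyset$ and $y$ lies strictly inside $\mathcal B(a,r)$ with $s$ on the bounding sphere, the only point of $\overline{\mathcal B}(a,r)$ on $\partial S$ is $s$, and any point outside $\mathcal B(a,r)$ is at distance $>\|y-s\|$ from $y$ because $y$ is interior to that ball — more precisely, $\|y-x\|\ge r-\|a-y\|=\|y-s\|$ for all $x\notin\mathring{\mathcal B}(a,r)$, with equality forcing $x=s$. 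Hence $s'=s$. The same argument with $\mathcal B(b,r)$ handles $y\in(s,b)$. Consequently the whole open segment $(a,b)$ minus $\{s\}$ projects uniquely onto $s\in\partial S$; since $s\in\mathrm{Unp}(\partial S)$ as well (it is at distance zero), we get that a neighbourhood of $s$ along the segment lies in $\mathrm{Unp}(\partial S)$, but that alone is not enough — we need a full open ball around each interior point of $(a,b)$.

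To upgrade this to a ball, fix $y$ on $(a,s)$ and let $t=\|y-s\|\in(0,r)$, so $\|a-y\|=r-t$. Take any $z$ with $\|z-y\|<t$; I want $z\in\mathrm{Unp}(\partial S)$ with projection near $s$. Here I would invoke Proposition~\ref{lembea}'s hypothesis indirectly by a compactness/continuity argument, or, more cleanly, argue directly: the function $x\mapsto d(x,\partial S)$ restricted to the segment is $1$-Lipschitz and the rolling balls vary; the standard trick (as in Federer's characterisation of positive reach) is to show that for any two boundary points $s_1,s_2$ and their outside rolling centres, the balls are either disjoint or the configuration forces a lower bound on $\|s_1-s_2\|$ in terms of the distances, yielding $\mathrm{reach}(\partial S,s)\ge r$ uniformly. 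I expect the main obstacle to be exactly this last passage from ``unique projection along the normal segment'' to ``unique projection on a full open ball of radius $r$''; the resolution is the observation that if $z$ had two nearest boundary points $s_1\ne s_2$ at distance $<r$, then the inside and outside rolling balls at $s_1$ and at $s_2$ would have to overlap in a way incompatible with $\mathring{\mathcal B}(\cdot,r)\cap\partial S=\emptyset$, because $z$ would lie in the closed ball of radius $r$ tangent at each $s_i$ from the appropriate side, and two such balls of equal radius sharing an interior point cannot both avoid a curve passing through both tangency points unless $s_1=s_2$. Making that overlap argument precise (tracking which side each ball sits on, using that $z$ is equidistant from $s_1,s_2$ and that $S$ is closed so the roles of inside/outside are consistent) is the technical heart, but it is a finite-dimensional convexity computation and goes through; since the bound is uniform in $s\in\partial S$, taking the infimum gives $\mathrm{reach}(\partial S)\ge r$. $\hfill\square$
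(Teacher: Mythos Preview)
The paper does not prove this proposition; it is quoted verbatim as Lemma~2.3 of \cite{pateiro:09}, so there is no in-paper argument to compare against. Assessing your attempt on its own merits, there is a genuine gap.

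You correctly set up the inside and outside rolling balls at a fixed $s\in\partial S$ and show that every point on the open normal segment $(a,b)$ has $s$ as its unique nearest boundary point. But, as you yourself flag, this only handles a one-dimensional set of points, and the passage to a full open ball in $\mathrm{Unp}(\partial S)$ is left as a sketch: ``the inside and outside rolling balls at $s_1$ and at $s_2$ would have to overlap in a way incompatible\ldots\ it is a finite-dimensional convexity computation and goes through.'' That sentence is exactly where the proof has to happen, and it is not written. (The collinearity of $a,s,b$ is also asserted without justification, though it does follow from the same tangent-balls observation.)

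The clean fix is to reverse the quantifiers: rather than fixing $s$ and trying to cover $\mathring{\mathcal B}(s,r)$, fix any $y$ with $0<\rho:=d(y,\partial S)<r$ and show its nearest boundary point is unique. Say $y\in S^c$ (the case $y\in\mathring S$ is symmetric), so $\mathring{\mathcal B}(y,\rho)\subset S^c$. If $s$ is any nearest point, the inside rolling ball $\mathring{\mathcal B}(b_s,r)\subset\mathring S$ is disjoint from $\mathring{\mathcal B}(y,\rho)$ and both closed balls contain $s$, which forces $y-s$ and $b_s-s$ to be antiparallel; hence $y$ lies on the segment $[s,a_s]$, where $a_s$ is the outside rolling centre, with $\|a_s-y\|=r-\rho$. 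If $s'$ were a second nearest point, then $\|a_s-s'\|\le\|a_s-y\|+\|y-s'\|=(r-\rho)+\rho=r$, while the outside rolling condition gives $\mathcal B(a_s,r)\cap\partial S=\{s\}$, forcing $s'=s$. This triangle-inequality line \emph{is} the ``overlap argument'' you gesture at; once stated it is two lines, and it is what your write-up is missing.
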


\

\noindent \textit{Some basic definitions on manifolds}\rm.  The following basic concepts are stated here for the sake of completeness and 
notational clarity. More complete information on these topics can be found, for example, in the classical
textbooks  \cite{boo75} and \cite{doc92}. See also  the book \cite{gal10} and the summary \cite[chapter 3]{zha11}.   Let us start with the classical concept of sub-manifold in ${\mathbb R}^d$ (often referred to simply as ``manifold''). Denote by ${\mathbb R}^k_+$ the half-space ${\mathbb R}^k_+=\{x\in {\mathbb R}^k:\ x_k\geq 0\}$.

\begin{definition}\label{def-manifold}
	 A  topological sub-manifold  ${\mathcal M}$ of dimension $k$ in ${\mathbb R}^d$
	is a subset of ${\mathbb R}^d$ with $k\leq d$ such  that every point in  ${\mathcal M}$ has a neighborhood 
	homeomorphic either to ${\mathbb R}^k$ or to
	${\mathbb R}^k_+$. 
	
	Those points of $\M$ having no neighborhood homeomorphic to ${\mathbb R}^k$ are called boundary points. If the boundary of $\M$ (i.e. the set of boundary points of $\M$) is empty we will say that ${\mathcal M}$ is a (sub-)manifold without boundary. 
	
	We will say that  a manifold without boundary ${\mathcal M}$ is a 
	\textit{regular $k$-surface},  or a differentiable   $k$-manifold of class $p\geq 1$, if there is  a
	family (often called \textit{atlas}) ${\mathcal V}=\{(V_\alpha,x_\alpha)\}$  of pairs $(V_\alpha,x_\alpha)$ 
	(often called \textit{parametrizations}, \textit{coordinate systems} or \textit{charts}) such that the
	$V_\alpha$ are open sets in ${\mathbb R}^k$ and the $x_\alpha:V_\alpha\rightarrow {\mathcal M}$ are
	functions of class $p$ satisfying: (i) ${\mathcal M}=\cup_\alpha x_\alpha(V_\alpha)$, (ii) every $x_\alpha$ 
	is a homeomorphism between $V_\alpha$ and $x_\alpha(V_\alpha)$ and (iii) for every $v\in V_\alpha$ 
	the differential $dx_\alpha(u):{\mathbb R}^k\rightarrow {\mathbb R}^d$ is injective. 
	
	A manifold with boundary $\mathcal{M}$ is said to be a regular $k$-surface if the set of interior points in $\mathcal{M}$ is a regular $k$-surface. 
	\end{definition}

A manifold ${\mathcal M}$ is said to be \textit{compact} when it is compact as a topological space. As a direct 
consequence of the definition of compactness, any compact  differentiable  manifold has a finite atlas. Typically, in most relevant 
cases the required atlas for a  differentiable  manifold has, at most, a denumerable set of charts.

An equivalent definition of the notion of manifold (see \citet[Def 2.1, p. 2]{doc92}) can be stated 
in terms of  \textit{parametrizations or  coordinate systems} of type $(U_\alpha,\varphi_\alpha)$ with 
$\varphi_\alpha:V_\alpha\subset {\mathcal M}\rightarrow {\mathbb R}^k$. The conditions would be completely similar
to the previous ones, except that the $\varphi_\alpha$ are defined in a reverse way to that of Definition 
\ref{def-manifold}.

In the simplest case, just one chart $x:V\rightarrow \M$ is needed. The structures defined in this way are
sometimes called \textit{planar manifolds}. 

\

\noindent \textit{Some background on geometric measure theory.} 
The important problem of defining lower-dimensional measures (surface measure, perimeter, etc.) has been 
tackled in different ways. The book by \cite{mat95} is a classical reference. We first recall the so-called 
Hausdorff measure. It is defined for any separable metric space $({\mathcal M},\rho)$. 
Given $\delta,r>0$ and $E\subset {\mathcal M}$, let
	$$
	{\mathcal H}^r_\delta(E)=\inf \left\{\sum_{j=1}^\infty (\diam (B_j))^r:\,E\subset\cup_{j=1}^\infty B_j,\ \diam (B_j)\leq \delta\right\},
	$$
	where $\diam(B)=\sup\{\rho(x,y): x,y\in B\}$, $\inf \emptyset=\infty$. 
	Now, define ${\mathcal H}^r(E)=\lim_{\delta\to 0}{\mathcal H}^r_\delta(E)$.

The set function ${\mathcal H}^r$ is an outer measure. If we restrict ${\mathcal H}^r$  to the measurable sets
(according to standard Caratheodory's definition) we get the $r$-dimensional Hausdorff measure on ${\mathcal M}$.

The Hausdorff dimension of a set $E$ is defined by
\begin{equation}
	\dim_H(E)=\inf\{r\geq 0: {\mathcal H}^r(E)=0\}=
	\sup(\{r\geq 0: {\mathcal H}^r(E)=\infty\}\cup\{0\}).\label{Def-dimhaus}
\end{equation}

It can be proved that, when ${\mathcal M}$ is a $k$-dimensional smooth manifold, $\dim_H({\mathcal M})=k$. 

\

Another popular notion to define lower-dimensional measures for the case
${\mathcal M}\subset{\mathbb R}^d$ is the \textit{Minkowski content}.   For an integer $d^\prime<d$ recall that  $\omega_{d-d^\prime}=\mu_{d-d^\prime}({\mathcal B}(0,1))$ and define the $d^\prime$-dimensional
Minkowski content of a set ${\mathcal M}$ by

\begin{equation}\label{def-min}
L_0^{d^\prime}({\mathcal M})=\lim_{\eps\to 0}\frac{\mu_d\left(B({\mathcal M},\eps)\right)}{\omega_{d-d^\prime}\eps^{d-d^\prime}},
\end{equation}

provided that this limit does exist.

In what follows we will often denote $L_0^{d^\prime}({\mathcal M})=L_0({\mathcal M})$, when the value of $d^\prime$ 
is understood. The term ``content'' is used here as a surrogate for ``measure'', as the expression \eqref{def-min} 
does not generally leads to a true (sigma-additive) measure. 

A compact set ${\mathcal M}\subset {\mathbb R}^d$ is said to be \textit{$d^\prime$-rectifiable} if there exists a 
compact set $K\subset {\mathbb R}^{d^\prime}$
and a Lipschitz function $f:{\mathbb R}^{d^\prime}\rightarrow {\mathbb R}^d$ such that ${\mathcal M}=f(K)$. 
 Theorem 3.2.39 in \cite{fed:69}  proves that for a compact $d^\prime$-rectifiable set ${\mathcal M}$,
$L_0^{d^\prime}({\mathcal M})=
{\mathcal H}^{d^\prime}(\M)$. 
More details on the relations between the rectifiability property and the structure of manifold can be found 
 in \cite{fed:69} Theorem 3.2.29.

\section{Checking closeness to lower dimensionality}\label{sec:lower}

We consider here the problem of identifying whether or not the set ${\mathcal M}\subset {\mathbb R}^d$ 
(not necessarily a manifold) has an empty interior. 

Note that, if ${\mathcal M}\subset \mathbb{R}^d$ is ``regular enough'', $\dim_H(\M)<d$ is
in fact equivalent to $\mathring{\M}=\emptyset$. 
Indeed, in general $\dim_H(\M)<d$ implies $\mathring{\M}=\emptyset$.  The converse implication is not always true, even
for sets fulfilling the property $\mathcal{H}^d(\partial \M)=0$ (see \cite{avila}). However it holds if $\M$ has 
positive reach, since in this case $\mathcal{H}^{d-1}(\partial \M)<\infty$ (see the comments after Th. 7 and inequality
(27) in \cite{amb08}). 

Also, clearly, in the case where ${\mathcal M}$ is a manifold, the fact that ${\mathcal M}$ has
an empty interior \textit{amounts to say that its dimension is smaller than that of the ambient space}.

\subsection{The noiseless model}
 
We first consider the case where the sample information follows the noiseless model explained in the Introduction,
that is,   the data $\mathcal{X}_n=\{X_1,\ldots,X_n\}$  are assumed to be an $iid$ sample of points drawn from an
unknown distribution $P_X$ with support $\M\subset \mathbb{R}^d$.  When ${\mathcal M}$ is a lower-dimensional set,
this model can be considered as an extension of the classical theory of directional (or spherical) data, in which 
the sample data are assumed to follow a distribution whose support is the unit sphere in ${\mathbb R}^d$. 
See, e.g., \cite{mar00}.

Our main tool here will be the simple \it offset\/ \rm or \textit{Devroye-Wise  estimator} (see \cite{dw:80}) given by
\begin{equation}\label{dw}
\hat{S}_{n}(r)=\bigcup_{i=1}^n \mathcal{B}(X_i,r).
\end{equation}
 
More specifically, we are especially interested in the ``boundary balls'' of $\hat{S}_{n}(r)$.

\begin{definition} Given $r>0$ let $\hat{S}_{n}(r)$  the set estimator (\ref{dw}) based on 
$\{X_1,\ldots,X_n\}$. We will say that $\mathcal{B}(X_i,r)$ is a boundary ball of $\hat{S}_{n}(r)$  if there exists
a point $y\in \partial \mathcal{B}(X_i,r)$  such that $y\in \partial \hat{S}_n(r)$. The  ``peeling'' of $\hat{S}_{n}(r)$, denoted by $\peel (\hat{S}_n(r))$, is the union of all non-boundary balls of $\hat{S}_{n}(r)$. In other words, $\peel (\hat{S}_n(r))$ is the result of removing from $\hat{S}_n(r)$ all the boundary balls. 
\end{definition}

The following theorem is the main result of this section. It relates, in statistical terms, the emptiness 
of $\mathring{\M}$ with $\peel(\hat{S}_n)$.

\begin{theorem} \label{Th:noiseless} Let $\M\subset \mathbb{R}^d$ be a compact non-empty set. Then under the model 
and notations stated in the two previous paragraphs we have,

(i) if $\mathring{\M}=\emptyset$, and $\M$ fulfills the outside rolling condition
for some $r>0$, then $\peel(\hat S_n(r'))=\emptyset$
for  any set $\hat{S}_n(r')$ of type (\ref{dw}) with $r'<r$.

(ii)  In the case $\mathring{\M}\neq \emptyset$, assume that there exists a ball $\mathcal{B}(x_0,\rho_0)\subset \mathring{\M}$ such that  $\mathcal{B}(x_0,\rho_0)$ is standard w.r.t to $P_X$, with 
 constants $\delta$ and $\lambda=\rho_0$  (see Definition \eqref{def-stand}). 
Then $\peel(\hat S_n(r_n))\neq\emptyset$ eventually, a.s., 
where $r_n$ is a radius sequence such that: $(\kappa \frac{\log(n)}{n})^{1/d}\leq r_n \leq \rho_0/2$ 
for a given
$\kappa >(\delta \omega_d)^{-1}$.

\begin{proof} (i) 
	To prove that $\peel(\hat{S}_n(r'))=\emptyset$ for all $r'<r$ it is enough to prove that for all $r'<r$ 
	and for all $i=1,\dots,n$
	there exists a point $y_i\in \partial \mathcal{B}(X_i,r')$ such that  $y_i \notin \mathcal{B}(X_j,r')$
	  
	for all $X_j\neq X_i$.
	Since $\M$ is closed and $\mathring{\M}=\emptyset$, $\partial \M=\M$. The outside rolling ball property
	implies that 
	for all $X_i\in \M$ exists $z_i\in \M^c$ such that $\mathcal{B}(z_i,r)\cap \M=\{X_i\}$. 
	Let us denote $u_i=(z_i-X_i)/r$,  then $y_i=X_i+r'u_i$  see Figure \ref{th1a}.  Clearly  
	$y_i\in\partial \mathcal{B}(X_i,r')$. From $\mathcal{B}(y_i,r')\subset\mathcal{B}(z_i,r)$ 
	and the outside rolling ball property we get that
	 $\{X_i\}\subset \mathcal{B}(y_i,r')\cap \mathcal{X}_n \subset\mathcal{B}(z_i,r)\cap \M \subset \{X_i\}$
	so that, for all $X_j\neq X_i$, $X_j\notin \mathcal{B}(y_i,r')$
	and thus, $y_i\notin\mathcal{B}(X_j,r')$.\\
		\begin{figure}[!h]
	\centering
	\includegraphics[scale=.4]{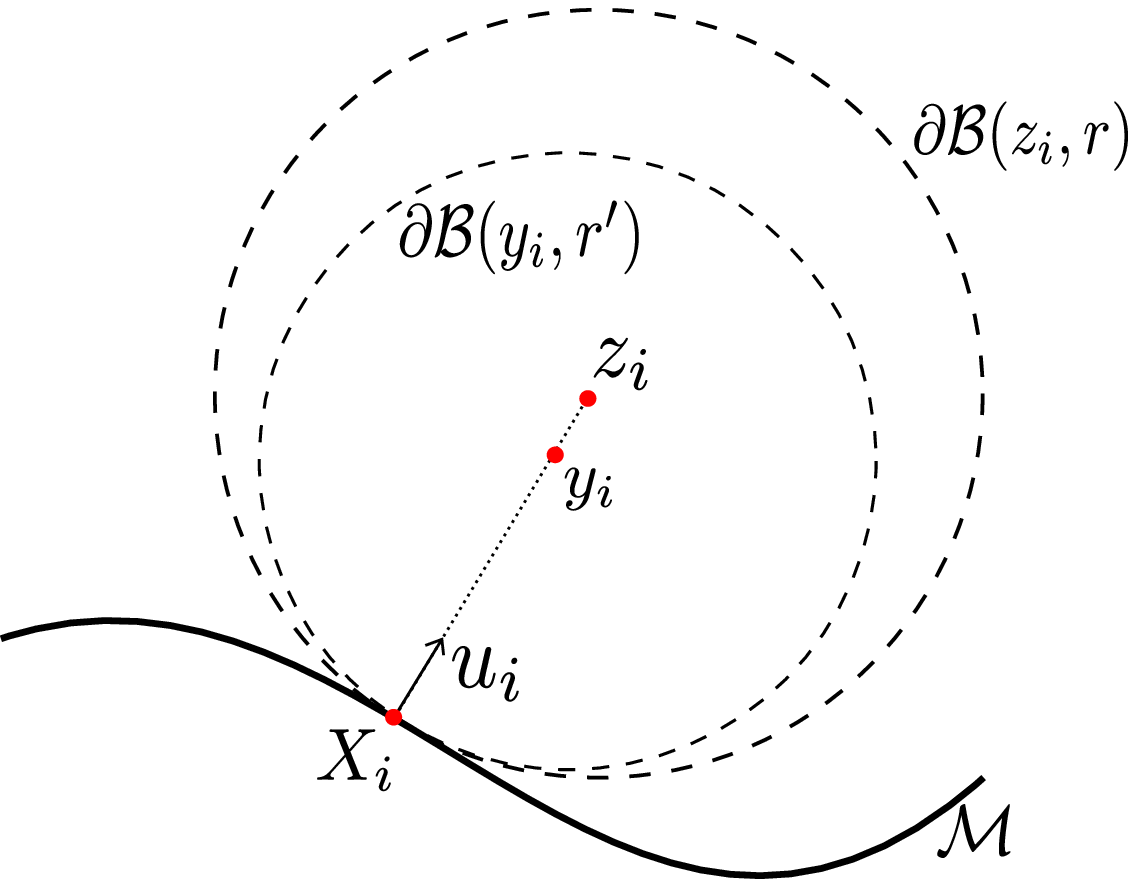}
	\caption{}%
	\label{th1a}
\end{figure}
	(ii) First  we are going to prove that
	\begin{equation}\label{th1b1}
	\begin{split}
	&\text{if }\Big(\frac{C\log(n)}{\delta \omega_d n}\Big)^{1/d}\leq r_n \leq \rho_0/2 \text{ for a given } C>1 \text{ then} \text{ eventually a.s.  for all } y\in \mathcal{B}(x_0,2r_n) \text{ we have }\\
	& \mathring{\mathcal{B}}(y,r_n)\cap \mathcal{X}_n\neq \emptyset.
	\end{split}
	\end{equation}
Consider only $n\geq 3$ and let $\varepsilon_n=(\log(n))^{-1}$, there is a positive constant $\tau_d$, such that we can cover $\mathcal{B}(x_0,2r_n)$ with $\nu_n=\tau_d\eps_n^{-d}$ balls of radius  $r_n\varepsilon_n$ centred in $\{t_1,\dots,t_{\nu_n}\}$.
	Let us define $$p_n=P_X\Big(\exists y\in \mathcal{B}(x_0,2r_n),\mathring{\mathcal{B}}(y,r_n)\cap \mathcal{X}_n=\emptyset\Big),$$
then,

\begin{equation} \label{th1b2}
p_n\leq \sum_{i=1}^{\nu_n} P_X\Big(\mathcal{B}\big(t_i,r_n(1-\varepsilon_n)\big)\cap \mathcal{X}_n=\emptyset\Big).
\end{equation}
Notice that for any given $i$,

	$$P_X\Big(\mathcal{B}\big(t_i,r_n(1-\varepsilon_n)\big)\cap \mathcal{X}_n=\emptyset\Big)=\Big(1-P_X\big(\mathcal{B}\big(t_i,r_n(1-\varepsilon_n)\big)\Big)^n. $$
Since $r_n\leq\rho_0/2$, $t_i\in \mathcal{B}(x_0,\rho_0)$, then using that $\mathcal{B}(x_0,\rho_0)$ is standard with the same $\delta$,
\begin{align*}
	P_X\Big(\mathcal{B}\big(t_i,r_n(1-\varepsilon_n)\big)\cap \mathcal{X}_n=\emptyset\Big)\leq & \Big(1-\omega_d\delta r_n^d(1-\varepsilon_n)^d\Big)^n \\
	\leq &\Big(1-C\frac{\log(n)}{n}\big(1-\varepsilon_n\big)^d\Big)^n.
	\end{align*}
Which, according to \eqref{th1b2} provides:
	$$p_n\leq \tau_d \varepsilon_n^{-d}\Big(1-C\frac{\log(n)}{n}\big(1-\varepsilon_n\big)^d\Big)^{n}\leq \tau_d\eps_n^{-d}n^{-C(1-\eps_n)^d},$$
	where we have used that $(1-x)^n\leq \exp(-nx)$. Since $C>1$, we can choose $\beta>1$ such that $p_n/n^{-\beta}\rightarrow 0$, then,  $\sum p_n<\infty$. Finally
	\eqref{th1b1} follows as a direct application of Borel Cantelli Lemma.
	Observe that \eqref{th1b1} implies that $x_0\in \hat{S}_n(r_n)$ eventually a.s.  see Figure \ref{th1b},  so there exists 
	$X_i$ such that $x_0\in \mathcal{B}(X_i,r_n)$ eventually a.s. Again by \eqref{th1b1} we get that, eventually a.s. for all
	$z\in \partial \mathcal{B}(X_i,r_n)$ there exists $X_j$ such that $z\in \mathring{\mathcal{B}}(X_j,r_n)$ and so 
	$z\notin \partial\hat{S}_n(r_n)$, which implies that, eventually a.s., $\mathcal{B}(X_i,r_n)$ is not removed by the peeling process and
	so $\peel(\hat S_n(r_n))\neq\emptyset$ eventually, a.s..\\
		\begin{figure}[!h]
		\centering
    	\includegraphics[scale=.4]{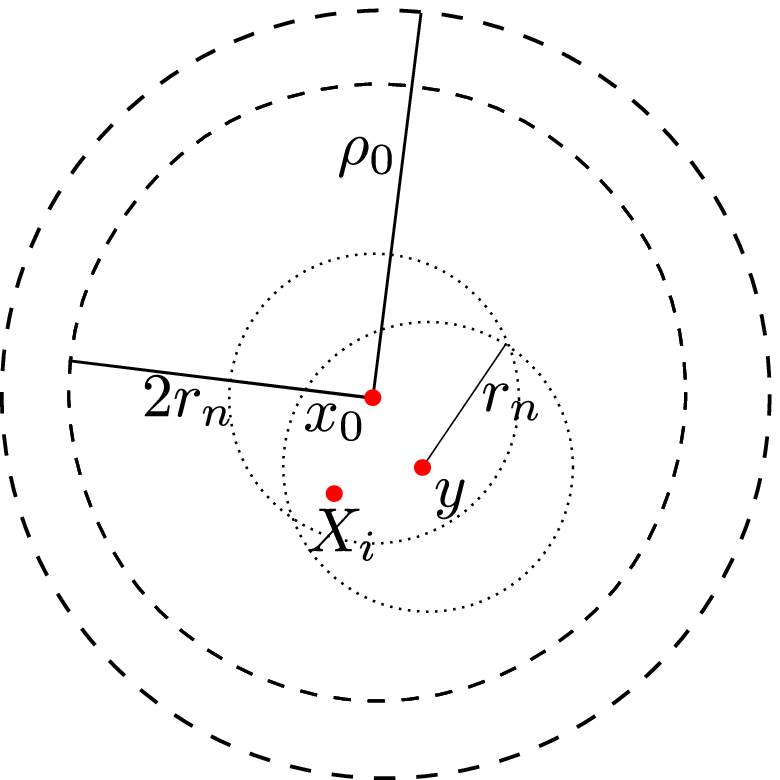}
		\caption{}%
		\label{th1b}
	\end{figure}
	\end{proof}
\end{theorem}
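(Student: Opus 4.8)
The plan is to treat the two assertions separately, since they exploit opposite geometric features of $\M$. For part (i), the core observation is that a boundary ball of $\hat S_n(r')$ is precisely a ball $\mathcal{B}(X_i,r')$ whose boundary sphere is not entirely swallowed by the other balls; hence $\peel(\hat S_n(r'))=\emptyset$ will follow if, for every sample point $X_i$, I can exhibit at least one witness point $y_i\in\partial\mathcal{B}(X_i,r')$ that lies outside every $\mathcal{B}(X_j,r')$ with $j\neq i$. Since $\mathring{\M}=\emptyset$ and $\M$ is closed, $\partial\M=\M$, so the outside $r$-rolling condition applies at \emph{every} $X_i$: there is a free ball $\mathcal{B}(z_i,r)$ exterior to $\M$ touching $\M$ only at $X_i$. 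First I would push the center inward along the segment from $X_i$ to $z_i$ by a factor $r'/r<1$, obtaining $y_i=X_i+r'(z_i-X_i)/r$, and check the elementary inclusion $\mathcal{B}(y_i,r')\subset\mathcal{B}(z_i,r)$ (two concentric-ish balls, the smaller one nested because $\|y_i-z_i\|=r-r'$). Then $\mathcal{B}(y_i,r')\cap\M\subset\mathcal{B}(z_i,r)\cap\M=\{X_i\}$, so no other sample point can lie in $\mathcal{B}(y_i,r')$, i.e.\ $y_i\notin\mathcal{B}(X_j,r')$ for $j\neq i$, giving the claim for all $r'<r$ deterministically.

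For part (ii), the strategy is a covering-plus-Borel--Cantelli argument establishing the ``dense coverage'' statement \eqref{th1b1}: eventually a.s., every ball of radius $r_n$ centered inside $\mathcal{B}(x_0,2r_n)$ already contains a sample point in its interior. First I would reduce the uncountable event over all $y\in\mathcal{B}(x_0,2r_n)$ to a finite union by picking an $\eps_n$-net $\{t_1,\dots,t_{\nu_n}\}$ of $\mathcal{B}(x_0,2r_n)$ at scale $r_n\eps_n$ with $\eps_n=(\log n)^{-1}$, so $\nu_n\asymp\eps_n^{-d}$; if some $\mathring{\mathcal{B}}(y,r_n)$ misses $\mathcal{X}_n$ then the slightly smaller ball $\mathcal{B}(t_i,r_n(1-\eps_n))$ around the nearest net point also misses it. Next I would bound $P_X(\mathcal{B}(t_i,r_n(1-\eps_n))\cap\mathcal{X}_n=\emptyset)=(1-P_X(\mathcal{B}(t_i,r_n(1-\eps_n))))^n$ using standardness of $\mathcal{B}(x_0,\rho_0)$ at $t_i$ (legitimate because $r_n\le\rho_0/2$ forces $t_i\in\mathcal{B}(x_0,\rho_0)$ and $r_n(1-\eps_n)\le\lambda=\rho_0$), giving probability at most $(1-\omega_d\delta r_n^d(1-\eps_n)^d)^n$; plugging in the lower bound $r_n^d\ge C\log n/(\delta\omega_d n)$ and $(1-x)^n\le e^{-nx}$ yields a union bound $p_n\le\tau_d\eps_n^{-d}n^{-C(1-\eps_n)^d}$, which is summable whenever $C>1$. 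Borel--Cantelli then delivers \eqref{th1b1}.

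Finally I would convert \eqref{th1b1} into the non-empty-peeling conclusion. The dense-coverage statement applied to $y=x_0$ shows $x_0\in\hat S_n(r_n)$ eventually a.s., hence $x_0\in\mathcal{B}(X_i,r_n)$ for some sample index $i$; then for \emph{every} $z\in\partial\mathcal{B}(X_i,r_n)$ we have $z\in\mathcal{B}(x_0,2r_n)$, so \eqref{th1b1} again produces $X_j$ with $z\in\mathring{\mathcal{B}}(X_j,r_n)$, i.e.\ $z$ is an interior point of $\hat S_n(r_n)$ and $z\notin\partial\hat S_n(r_n)$. Thus $\mathcal{B}(X_i,r_n)$ has no boundary point of $\hat S_n(r_n)$ on its sphere, so it survives the peeling and $\peel(\hat S_n(r_n))\neq\emptyset$ eventually a.s. One last bookkeeping point: the hypothesis allows any $\kappa>(\delta\omega_d)^{-1}$ with $r_n\ge(\kappa\log n/n)^{1/d}$, which is exactly the condition $r_n^d\ge C\log n/(\delta\omega_d n)$ for $C=\kappa\delta\omega_d>1$, so the ranges match.

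\textbf{Main obstacle.} The delicate part is the net/discretization step in (ii): one must choose $\eps_n$ slowly decaying enough (here $(\log n)^{-1}$) that the factor $(1-\eps_n)^d$ stays above $1/C$ for large $n$ so that $n^{-C(1-\eps_n)^d}$ remains summable, yet the net cardinality $\nu_n=\tau_d\eps_n^{-d}$ grows only polylogarithmically and does not destroy summability after multiplication. Getting the exponent inequalities and the "smaller ball still misses the sample" geometric reduction exactly right is where the argument could go wrong; everything else is routine.
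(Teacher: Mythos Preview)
Your proposal is correct and follows essentially the same argument as the paper's proof: part (i) uses the identical rolling-ball witness construction $y_i=X_i+r'(z_i-X_i)/r$ with the inclusion $\mathcal{B}(y_i,r')\subset\mathcal{B}(z_i,r)$, and part (ii) is the same $\eps_n=(\log n)^{-1}$ net plus standardness plus Borel--Cantelli route to the dense-coverage claim, followed by the same two-step application of that claim (first to $x_0$, then to all $z\in\partial\mathcal{B}(X_i,r_n)\subset\mathcal{B}(x_0,2r_n)$) to exhibit a surviving ball. Your explicit remark that $z\in\mathcal{B}(x_0,2r_n)$ via the triangle inequality and the bookkeeping $C=\kappa\delta\omega_d>1$ are details the paper leaves implicit, but otherwise the two proofs coincide.
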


\begin{remark}\label{remark:r1} Some comments on Theorem \ref{Th:noiseless} are in order, regarding the intuitive meaning of the result itself, the required assumptions and the involved parameters. First note that the outside rolling condition imposed in part (i) is nothing but a geometric smoothness property ruling out the existence of very sharp inward peaks in the boundary of the set. It is close, but not equivalent, to the positive reach condition, as stated in Definition \ref{def-reach}. Clearly, the value of the parameter $r$ in Theorem \ref{Th:noiseless} is a regularity  condition on $\M$: the larger $r$, the more regular $\M$. In general,  if we want to obtain, using statistical methods, some meaningful results on the dimensionality or the interior of $\M$, we will need to impose some regularity property. The advantage of the rolling condition is its simple intuitive, almost ``visual'', interpretation. See \cite{wal99} and \cite{cue12} for further insights on the rolling condition and related properties. 
	
	Regarding part (ii): if  $\mathring{\M}\neq\emptyset$ there must be some ball ${\mathcal B}(x_0,\rho_0)$ included in $\mathring{\M}\neq\emptyset$. The standardness assumption imposed in the theorem, only asks that the probability $P_X$ is not ``too far from uniformity'' on that ball. To be more specific, the probability of the intersection with ${\mathcal B}(x_0,\rho_0)$ of any small enough ball $B$  centered at a point of ${\mathcal B}(x_0,\rho_0)$ must be at most $\delta$ times the volume of $B$. Observe that this mild condition holds, in particular, whenever $P_X$ has a density $f$ bounded 
	from below by a positive constant. More insights on the meaning and use of this standardness property   can be found, for example,  in \cite{cf97} and \cite{rin10}.
	
	Finally, about the interpretation of parts (i) and (ii) in the theorem: statement (i) is simple. It just establishes that the property $\mathring \M=\emptyset$ can be identified, with probability one, whatever the simple size using the offset estimator \eqref{dw} with any radius smaller that the assumed rolling parameter $r$.  As for part (ii), let us note that the only relevant parameter is the standardness  constant $\delta$.  A conservative choice of $\delta$ would also do the job asymptotically. In this case,  the identification of 
	$\mathring{\M}\neq\emptyset$ is done asymptotically (eventually, a.s.) by taking the offset estimator with balls of radii $r_n$ depending only on $\delta$ and $n$. The order $(\log n/n)^{1/d}$ of such balls appears typically in the convergence rates of many set estimators (see \cite{cf97}, \cite{rod07}) as well as in the theory of multivariate spacings, \cite{jan87}. 
	
	Hence, in summary, the method to identify whether or not $\mathring{\M}=\emptyset$ is completely ``algorithmic'' and works, under some regularity conditions on $\M$, with probability one. While the situation $\mathring{\M}=\emptyset$ is easy to identify, the identification of  $\mathring{\M}\neq\emptyset$ only works asymptotically. 
\end{remark}

\

\noindent \textit{The manifold case}.
If $\M$ is assumed to be a manifold, then, under some mild additional assumptions, the identification of low dimensionality can be done in a completely automatic (data-driven) way, with no resort to extra parameters. In other words, the radius of the balls in the auxiliary Devroye-Wise estimator  can be chosen as a function of the data in such a way that it is (asymptotically) small enough to identify the situation $\mathring \M=\emptyset$ and large enough to eventually detect $\mathring \M=\emptyset$, when this is the case.

 \begin{theorem}\label{th:choosern}
{Let $\M$ be a $d'$-dimensional compact manifold in ${\mathbb R}^d$.
Suppose that the sample points $X_1,\ldots,X_n$ are drawn from a probability measure $P_X$  with support $\M$ 
which  has a density $f$, with respect the $d'$-dimensional Hausdorff measure on $\M$, continuous on $\M$ such that
$f_0=\min_{x\in \M}f(x)>0$. Let us define, for any $\beta>6^{1/d}$, $r_n=\beta\max_i\min_{j\neq i} \|X_j-X_i\|$.
Then,
\begin{itemize}
	\item[$i)$] if  $d'=d$   and $\partial \mathcal{M}$ is a $\mathcal{C}^2$  manifold then $\peel(\hat S_n(r_n))\neq\emptyset$ eventually, a.s.. 
	\item[$ii)$]  if  $d'<d$ and $\M$ is a $\mathcal{C}^2$  manifold without boundary, then  $\peel(\hat S_n(r_n))=\emptyset$ eventually, a.s.
\end{itemize} }
\end{theorem}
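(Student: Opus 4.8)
The plan is to reduce Theorem \ref{th:choosern} to Theorem \ref{Th:noiseless} by showing that the data-driven radius $r_n = \beta \max_i \min_{j\neq i}\|X_j - X_i\|$ is, eventually a.s., sandwiched between the two regimes required there. Concretely, I would prove two things: (a) an \emph{upper} bound $r_n \to 0$ fast enough — in fact $r_n = O((\log n / n)^{1/d'})$ a.s. — so that in case $ii)$ we eventually have $r_n < r$, where $r>0$ is a rolling parameter for the $\mathcal{C}^2$ manifold without boundary $\M$ (a compact $\mathcal{C}^2$ manifold without boundary has positive reach, hence satisfies the outside rolling condition for some $r>0$ by the remarks after Definition \ref{def-reach}); then part (i) of Theorem \ref{Th:noiseless} applies verbatim and gives $\peel(\hat S_n(r_n)) = \emptyset$ eventually a.s. And (b) a \emph{lower} bound showing that in case $i)$, where $d'=d$, the radius $r_n$ is eventually at least $(\kappa \log n/n)^{1/d}$ for a suitable $\kappa > (\delta\omega_d)^{-1}$, while still $r_n \le \rho_0/2$ for some ball $\mathcal{B}(x_0,\rho_0)\subset \mathring\M$; then part (ii) of Theorem \ref{Th:noiseless} applies and gives $\peel(\hat S_n(r_n)) \neq \emptyset$ eventually a.s. Here $\delta$ can be taken from Proposition \ref{prop-stand} applied to the interior ball (when $d'=d$, $f$ bounded below by $f_0$ and the ambient and intrinsic measures coincide, so $\delta \le f_0/3$ works with $\lambda = \rho_0$).

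The technical core is controlling the quantity $M_n := \max_i \min_{j\neq i}\|X_j - X_i\|$, the largest nearest-neighbor distance in the sample. For the upper bound I would use a standard covering argument: since $\M$ is a compact $d'$-manifold with a density bounded below by $f_0>0$ w.r.t.\ $\mathcal{H}^{d'}$, it is standard w.r.t.\ $P_X$ in the $d'$-dimensional sense — i.e.\ $P_X(\mathcal{B}(x,\eps)\cap\M) \ge c\,\eps^{d'}$ for small $\eps$, uniformly in $x\in\M$ — which follows from the $\mathcal{C}^2$ regularity (local bi-Lipschitz charts) together with $f \ge f_0$. Cover $\M$ by $O(\eps_n^{-d'})$ balls of radius $\eps_n/2$; if every such ball contains a sample point, then every $X_i$ has a neighbor within $\eps_n$, so $M_n \le \eps_n$. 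Taking $\eps_n = (K\log n/n)^{1/d'}$ with $K$ large, a union bound plus $(1-c\eps_n^{d'})^n \le n^{-cK'}$ and Borel–Cantelli gives $M_n \le \eps_n$ eventually a.s.; multiplying by $\beta$ keeps $r_n \to 0$, so $r_n < r$ eventually, settling $ii)$.

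For the lower bound in case $i)$ I need $M_n$ not to be too small. The cheap route: fix the interior ball $\mathcal{B}(x_0,\rho_0)\subset\mathring\M$; the number of sample points falling in it is $\Theta(n)$ a.s., and the nearest-neighbor distance among $n$ uniform-ish points in a full-dimensional region is of exact order $(\log n/n)^{1/d}$ — more precisely, one shows $\mathbb{P}(M_n < t_n) \to 0$ summably for $t_n = (\kappa'\log n/n)^{1/d}$ with $\kappa'$ small, again by a covering/second-moment or direct union-bound argument (the probability that \emph{some} point has no neighbor within $t_n$ dominates the probability that a fixed grid cell of size $\sim t_n$ contains exactly one point, which is $\gtrsim n (n t_n^d) (1 - c t_n^d)^{n}$). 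Choosing $\beta > 6^{1/d}$ is exactly what makes $\beta^d$ large enough that $\beta M_n \ge (\kappa\log n/n)^{1/d}$ with $\kappa > (\delta\omega_d)^{-1} = 3/(f_0\omega_d)$, reconciling with the hypothesis of Theorem \ref{Th:noiseless}(ii); the constant $6$ presumably comes from $\delta = f_0/3$ and a factor $2$ lost in the covering. Simultaneously $r_n = \beta M_n \le \rho_0/2$ eventually by the upper bound, so both constraints of Theorem \ref{Th:noiseless}(ii) hold.

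The main obstacle I anticipate is the \emph{two-sided} quantitative control of $M_n$ with matching constants: the upper bound is routine, but pinning down the lower bound on $M_n$ with a constant sharp enough to guarantee $\beta^d \delta \omega_d > 1$ for \emph{every} $\beta > 6^{1/d}$ requires care, because $M_n$ concentrates and one must argue about its lower tail, not just its expectation, and because one must make sure the relevant "sparse" sample point witnessing a large nearest-neighbor gap lies in (or can be transported to) the interior ball where standardness with constant $\delta$ is available. Handling the boundary of $\M$ in case $i)$ — where near $\partial\M$ the density of points is lower, so the local nearest-neighbor distances are slightly larger, which only helps — and confirming the $\mathcal{C}^2$ assumption on $\partial\M$ is used only to secure the inside rolling / standardness structure, is the remaining bookkeeping.
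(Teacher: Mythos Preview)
Your overall strategy is correct and matches the paper: reduce both parts to Theorem~\ref{Th:noiseless} by sandwiching the data-driven radius $r_n$ between the two required regimes, using that a compact $\mathcal{C}^2$ manifold without boundary has positive reach (hence an outside rolling radius) for part~$ii)$, and standardness with $\delta=f_0/3$ (via Proposition~\ref{prop-stand}) for part~$i)$.

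The substantive difference is how the two-sided control of $M_n=\max_i\min_{j\neq i}\|X_i-X_j\|$ is obtained. You propose elementary covering arguments for the upper bound and an ad hoc isolated-cell/second-moment argument for the lower bound; the paper instead invokes Penrose's strong law for the largest nearest-neighbour link \cite{penrose:99}. For part~$ii)$ this makes little difference: any proof that $M_n\to 0$ suffices, and your covering argument is fine (the paper uses Penrose's Riemannian version, Theorem~5.1, together with the inequality between Euclidean and geodesic distance). For part~$i)$, however, Penrose's Theorem~1.1 gives the \emph{exact} a.s.\ limit
\[
\frac{n\,\omega_d\,M_n^d}{\log n}\;\longrightarrow\;\max\Big\{\tfrac{1}{f_0},\tfrac{2(d-1)}{d f_1}\Big\}\;\ge\;\frac{1}{f_0},
\]
so that eventually $r_n^d=\beta^d M_n^d\ge \beta^d\log n/(2\omega_d f_0 n)$; with $\kappa=\beta^d/(2\omega_d f_0)$ and $\delta=f_0/3$, the condition $\kappa>(\delta\omega_d)^{-1}$ becomes exactly $\beta^d>6$. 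Thus the constant $6^{1/d}$ in the statement is a direct consequence of Penrose's sharp limit plus a factor $2$ coming from passing from ``limit equals $L$'' to ``eventually $\ge L/2$'', not from a covering loss as you guessed. Your isolated-point heuristic is the right intuition, but to make it yield a constant sharp enough that \emph{every} $\beta>6^{1/d}$ works you would essentially have to reprove Penrose's lower bound; this is the gap you correctly flagged as the main obstacle, and citing \cite{penrose:99} is the clean way to close it.
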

\begin{proof}
{
	\begin{itemize}
	\item[$i)$]  We will use Theorem \ref{Th:noiseless} (ii). In order to do that, we will prove first that the set is standard.  
	 As $d'=d$ then  $\partial \mathcal{M}$ is a $\mathcal{C}^2$ a compact  $(d-1)$-manifold.
	 Then we can use the following result, due to \cite{wal99}. \\

\
Theorem \cite[Th.1]{wal99}.- Let $S\subset \mathbb{R}^d$ be a compact path-connected set with $\mathring S\neq\emptyset$ and let $r_0>0$. Then, the following conditions are equivalent
	\begin{itemize}
		\item[1] A ball of radius $r$ rolls freely inside $S$ and inside $\overline{S^c}$ for all $0\leq r\leq r_0$. 
		\item[2] $\partial S$ is a $(d-1)$-dimensional $C^1$ submanifold in $\mathbb{R}^d$ with the outward pointing unit normal vector $n(s)$ at $s\in \partial S$ satisfying the Lipschitz condition
		$$\|n(s)-n(t)\|\leq \frac{1}{r_0}\|s-t\|\quad \text{ for all }s,t\in \partial S$$ 
	\end{itemize}

	In fact, the author points out that the result is also valid if the condition of path-connected is dropped and we only assume that every path connected component of $\M$ has non-empty interior. Hence, note that this result can be applied in our case for $S=\M$ since the ${\mathcal C}^2$ assumption on the compact hypersurface $\partial\M$ implies the Lipschitz condition for the outward normal vector and the assumption $\mathring\M_1\neq \emptyset$ for every path-connected component $\M_1$ of $\M$ is guaranteed from the fact that every point in $\M$ has a neighborhood homeomorphic to an open set in ${\mathbb R}_+^d$.  Thus, we may use the result 2 $\Rightarrow$ 1 in the above theorem to conclude that $\M$ fulfills both the inside and outside rolling ball property for a small enough radius $r>0$.  Then by Proposition \ref{lembea} $\text{reach}(\partial \M)\geq r$. So, by Proposition \ref{prop-stand}, $\M$ satisfies  the 
	standardness condition established in Definition \ref{def-stand} with $\nu=P_X$, $\delta= f_0/3$ and 
	$\lambda<r$.	 Now, in order to prove that $r_n$ fulfils all the conditions in Theorem \ref{Th:noiseless} (ii)  observe that in the full-dimensional case $d'=d$ the intrinsic volume in $\M$ 
	coincides with the restricted Lebesgue measure; see \cite[Prop. 12.6]{tay06}. As a consequence, $f$ is equal to 
	the density of $P_X$ w.r.t. the Lebesgue measure restricted to $\M$. Let us denote 
	$f_1=\min_{x\in \partial \M}f(x)$.   Note that $r_n/\beta$ is in fact the ``connectivity statistic", that is the minimum value of $r$ such that $\cup_i{\mathcal B}(X_i,r)$ is a connected set. Then, as $f$ is continuous and bounded below from zero on the compact set $\M$ with smooth boundary we are in the assumptions of Theorem 1.1 in \cite{penrose:99} so that, using this result we can conclude that, with probability one, we have,
	$$\frac{nr_n^d\omega_d}{\log(n)\beta^d}\rightarrow \max\Big\{\frac{1}{f_0},\frac{2(d-1)}{df_1}\Big\}\geq \frac{1}{f_0}.$$
	Then for $n$ large enough,
	$$r_n\geq \Big(\frac{\log(n)}{n}\frac{\beta^d}{\omega_d2f_0}\Big)^{1/d},$$
	now if we denote $\kappa=\beta^{d}/(\omega_d2f_0)$, it fulfills that $\kappa>(\delta \omega_d)^{-1}$, so we are in 
	the hypotheses of Theorem \ref{Th:noiseless} (ii) and then we can conclude $\peel(\hat S_n(r_n))\neq\emptyset$ 
	eventually, with probability 1.
	
	\item[$ii)$] 
	 Notice that we can use Theorem  \ref{Th:noiseless} (ii) indeed, as $\M$ is a  $\mathcal{C}^2$ 
	compact manifold of ${\mathbb R}^d$
	 by \cite[Prop. 14]{tha08} it has a positive reach and, thus, it satisfies the outside rolling ball condition (for some radius $r>0$).
	 Then it remains to be proved that  $r_n\leq r$ for $n$ large enough. 
	 Let us endow $\M$ with the standard Riemannian structure, where a local metric is defined on every tangent space just by restricting on it the standard inner product on ${\mathbb R}^d$.  Under smoothness assumptions, the Riemannian measure induced by such a metric on the manifold $\M$ agrees with the $d'$-dimensional Hausdorff measure on $\M$ (this  is just a particular case of the Area Formula; see \cite[3.2.46]{fed:69}). So we may use Theorem 5.1 in \cite{penrose:99}. As a consequence of that result
	 \begin{equation}
	 \max_i\min_{j\neq i} \gamma(X_i,X_j)={\mathcal O}\left(\left(\frac{\log n}{n}\right)^{1/d'}\right),\ \mbox{a.s.},\label{eq:pen51}
	 \end{equation}
where $\gamma$ denotes the geodesic distance on $\M$ associated with the Riemannian structure. Now, since the Euclidean distance is smaller than the geodesic distance, we have for all $i,j$,\break $\|X_j-X_i\|\leq \gamma(X_i,X_j)$ and
	 $\min_j\gamma(X_i,X_j)=\gamma(X_i,X_{i'})\geq \|X_i-X_{i'}\|\geq\break \min_j \|X_i-X_j\|$ and finally 
	 $\max_i \min_{j\neq i} \gamma(X_i,X_j) \geq \max_i \min_{j\neq i} \|X_i-X_j\|$.
	Finally from \eqref{eq:pen51} we have $\max_i\min_{j\neq i} \|X_j-X_i\|\stackrel{a.s.}{\longrightarrow}0$, which concludes
	the proof.
	\end{itemize}}
\end{proof}

\

\subsection{The case of noisy data: the ``parallel'' model}\label{subsec:noisy}

The following two theorems are meaningful in at least two ways. On the one hand, if we know the amount of noise ($R_1$ in the notation introduced before), 
these results can be used to detect whether or not the support $\M$ of the original sample
is full dimensional  
(see \eqref{ineqth2} and \eqref{ineqth2bis}).

On the other hand, in the lower dimensional setting, they give an easy-to-implement way to estimate $R_1$ { 
(see \eqref{ineqth} and \eqref{ineqthbis})}.

Observe that when $\mathring{\M}=\emptyset$, then $R_1=\max_{x\in S} d(x,\partial S)$. 
If {{$\widehat{\partial S}_n$}} denotes a consistent estimator of $\partial B(\M,R_1)$, a natural plug-in estimator for 
$R_1$ is $\max_{Y_i\in \mathcal{Y}_n} d(\mathcal{Y}_n,\widehat{\partial S}_n)$.

 In Theorem \ref{Th:noisy1} $\widehat{\partial S}_n$ is constructed  in terms of the set of the centers of the boundary balls, while in Theorem 
\ref{thnoise1} we 
use the boundary of the $r$-convex hull. The second theorem is stronger than the first one in several aspects: the parameter 
choice is easier and the convergence rate is better (and does not depend on the parameter). The price to pay is computational
since the corresponding statistic is much more difficult to implement; see Section \ref{sec:comput}.

\begin{theorem} \label{Th:noisy1} Let $\M\subset \mathbb{R}^d$ be a compact set such that $\emph{reach}(\M)=R_0>0$.
 Let $R_1$ be a constant with $0<R_1<R_0$ and  let $\mathcal{Y}_n=\{Y_1,\dots,Y_n\}$ be an iid sample of a distribution $P_Y$ with support $S=B(\M,R_1)$, 
absolutely continuous with respect to the Lebesgue measure, whose density $f$ is bounded from below  
by $f_0>0$.
Let $\eps_n=c(\log(n)/n)^{1/d}$, with  $c>(6/(f_0\omega_d))^{1/d}$,   
 and let us denote $\hat{R}_n=\max_{Y_i\in \mathcal{Y}_n}\min_{j\in I_{bb}}\|Y_i-Y_j\|$ 
where $I_{bb}=\{j:\mathcal{B}(Y_j,\eps_n) \text{ is a boundary ball}\}$.
\begin{itemize}
	\item[i)] if $\mathring{\M}=\emptyset$ then, with probability one,
      \begin{equation} \label{ineqth}
      \left|\hat{R}_n-R_1\right|\leq  2  \eps_n \ \text{ for } n \text{ large enough},
      \end{equation}

      \item[ii)] if $\mathring{\M}\neq\emptyset$ then there exists $C>0$ such that, with probability one
\begin{equation} \label{ineqth2}
\left|\hat{R}_n-R_1\right|>C \ \text{ for } n \text{ large enough}.
\end{equation}
\end{itemize} 
\begin{proof} 
\begin{itemize}
\item[$i)$]

 Observe, that, since $\mathring{\mathcal{M}}=\emptyset$, $R_1=\max_{x\in S} d(x,\partial S)$. Then, the proposed estimator $\hat R_n$ is quite natural: roughly speaking, we may consider that the set of centres of the boundary balls is an estimator of the boundary of $S$ so that the maximum distance from the sample points to these centres is a natural estimator of the  parameter $R_1$ that measures the ``thickness'' of $S$.  We will now use 
Corollary 4.9 in \cite{fed59}; this result establishes that for $r>0$, the $r$-parallel set of a non-empty closed set $A$ fulfills $\mbox{reach}(B(A,r))\geq \mbox{reach}(A)-r$. Also, $\mbox{reach}\{x:d(x,A)\geq r\}\geq r$. Then, in our case, for $S=B(\M,R_1)$, this result yields  $\text{reach}(S)\geq R_0-R_1>0$ and $\text{reach}(\overline{S^c})\geq R_1$.  By Proposition 1 and 2 in \cite{cue12} $S$ fulfils the inner and outer rolling condition.

Another consequence of the positive reach of $S$ is that it has a Lebesgue null boundary and thus, 
with probability one for all $i$, $Y_i\in \mathring{S}$ and then, with probability one
\begin{equation}\label{1aster}
 \hat{S}_n(\eps_n)\subset B(\mathring{S},\eps_n).
\end{equation}
 Since $\text{reach}(\overline{S^c})>0$,  by Proposition \ref{prop-stand} $S$ is standard with respect to 
$P_X$ for any constant $\delta<f_0/3$ (see Definition \ref{def-stand}). 

\

 Now, we will use Theorem 4 and Proposition 1  in \cite{crc:04}; according to these result, if $S$ is partially expandable and it is standard with respect to $P_X$ (both conditions are satisfied in our case)  we have for  large enough $n$, with
probability one, 
\begin{equation}\label{2aster}
 S\subset\hat{S}_n(\eps_n),
\end{equation}
for a choice of $\eps_n$ as that indicated in the above statement of the theorem.

For all $x\in S$ let us consider $z\in \partial S$ a point such that $\|x-z\|=d(x,\partial S)$ and
$t=z+\eps_n\eta$ where $\eta=\eta(z)$ is a normal vector to $\partial S$ at $z$ that points outside
$S$ ($\eta$ can be defined according to Definition 4.4 and Theorem 4.8 (12) in \cite{fed59}). 
 Notice that the metric projection of $t$ on $S$ is $z$ 
thus $d(t,S)=\eps_n$ so, according to \eqref{1aster},
with probability one $t\notin \hat{S}_n(\eps_n)$. The point $z$ belongs to $S$ so, by \eqref{2aster}, 
with probability one for $n$ large enough  $z\in \hat{S}_n(\eps_n)$. We thus conclude
$[t,z]\cap \partial \hat{S}_n(\eps_n)\neq \emptyset$, with probability one, for $n$ large enough. Let then consider 
$y\in [t,z]\cap \partial \hat{S}_n(\eps_n)$, as $y\in \partial \hat{S}_n(\eps_n)$ there exists 
$i\in I_{bb}$ such that $y\in \partial \mathcal{B}(Y_i,\eps_n)$ and, as $y\in [t,z]$,  $\|y-z\|\leq \eps_n$ 
thus $\|x-Y_i\|\leq \|x-z\|+\|z-y\|+\|y-Y_i\|\leq d(x,\partial S)+2\eps_n$. To summarize we just have proved that: 
for all $x\in S$ there exits $i\in I_{bb}$ such that $\|x-Y_i\|\leq d(x,\partial S)+2\eps_n$ thus
for all $x\in S$ :  $\min_{i\in I_{bb}} \|x-Y_i\|\leq d(x,\partial S)+2\eps_n$. To conclude
 $\max_j \min_{i\in I_{bb}} \|Y_j-Y_i\|\leq \max_j d(Y_j,\partial S)+2\eps_n\leq  \max_{x\in S}d(x,\partial S)+2\eps_n=R_1+2\eps_n$ 
 (with probability one for $n$ large enough).

The reverse inequality is easier to prove, let us consider $x_0\in S$ such that $d(x_0,\partial S)=R_1$, 
notice that, by \eqref{2aster} (with probability one for $n$ large enough) there exists $i_0$ such that \break
$\|x_0-Y_{i_0}\|\leq \eps_n$. By triangular inequality $\mathcal{B}(Y_{i_0},R_1-\eps_n)\subset S$ and by
\eqref{2aster}  we also have $\mathcal{B}(Y_{i_0},R_1-\eps_n)\subset \hat{S}_n(\eps_n)$ thus $\min_{i\in I_{bb}}\{\|Y_{i_0}-Y_i\|\}\geq R_1-2\eps_n$.
Then we have proved $\max_j \min_{i\in I_{bb}}\{\|Y_{i}-Y_j\|\}\geq R_1-2\eps_n$. This concludes the proof of \eqref{ineqth}.

	\item[$ii)$] Observe that to prove $i)$ we proved that $|\hat{R}_n-\max_{x\in S}d(x,\partial S)|< 2  \eps_n$. 
	Then, with probability one, for $n$ large enough, $|\hat{R}_n-R_1|>|c_1-R_1|/2=C>0$, where $c_1=\max_{x\in \partial S}d(x,\partial S)$.
	\end{itemize}
\end{proof}
\end{theorem}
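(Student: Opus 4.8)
\emph{Proof strategy.} The plan is to reduce both parts to a single quantitative statement: with probability one, for $n$ large enough, $|\hat R_n-\rho(S)|\le 2\eps_n$, where $\rho(S)=\sup_{x\in S}d(x,\partial S)$ is the ``thickness'' of the parallel set $S=B(\M,R_1)$. Granting this, part (i) is immediate, since $\mathring{\M}=\emptyset$ forces $\rho(S)=R_1$ (the deepest point of $S$ lies on $\M$, at distance exactly $R_1$ from $\partial S$: indeed $d(x,\partial S)=R_1$ for every $x\in\M$ and $d(x,\partial S)<R_1$ for $x\in S\setminus\M$). Part (ii) follows because $\mathring{\M}\neq\emptyset$ yields a point of $\mathring\M$ at positive distance $\rho(\M)>0$ from $\partial\M$, so that $\mathcal B(x_1,\rho(\M)+R_1)\subset S$ and hence $\rho(S)\ge R_1+\rho(\M)>R_1$; then $|\hat R_n-R_1|\ge(\rho(S)-R_1)-2\eps_n>(\rho(S)-R_1)/2=:C$ for $n$ large. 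So the whole content is the estimate $|\hat R_n-\rho(S)|\le 2\eps_n$, whose difficulty is to control the set $I_{bb}$ of boundary-ball indices from both sides: every boundary-ball centre must lie within $2\eps_n$ of $\partial S$, and every point of $\partial S$ must lie within $2\eps_n$ of some boundary-ball centre.

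\emph{Step 1: geometry of $S$.} First I would record the regularity of $S=B(\M,R_1)$. By Corollary 4.9 in \cite{fed59}, $\mbox{reach}(S)\ge\mbox{reach}(\M)-R_1=R_0-R_1>0$ and $\mbox{reach}(\overline{S^c})=\mbox{reach}\{x:d(x,\M)\ge R_1\}\ge R_1>0$; by Propositions 1 and 2 in \cite{cue12}, $S$ then satisfies both the inside and the outside rolling conditions. Two consequences are needed: a set of positive reach has a Lebesgue-null boundary, so a.s.\ every $Y_i$ lies in $\mathring S$ and hence a.s.\ $\hat S_n(\eps_n)\subset B(\mathring S,\eps_n)$, which means that a point $t$ with $d(t,S)=\eps_n$ is a.s.\ not in $\hat S_n(\eps_n)$; and, by Proposition \ref{prop-stand}, $S$ is standard with respect to $P_Y$ for any $\delta<f_0/3$. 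The rolling properties make $S$ partially expandable in the sense of \cite{crc:04}, so Theorem 4 and Proposition 1 there apply: for $\eps_n=c(\log(n)/n)^{1/d}$, whose constant $c>(6/(f_0\omega_d))^{1/d}$ is exactly what those results demand, one gets $S\subset\hat S_n(\eps_n)$ eventually, a.s. Thus, a.s.\ for $n$ large, $S\subset\hat S_n(\eps_n)\subset B(\mathring S,\eps_n)$.

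\emph{Step 2: the two-sided estimate.} For the upper bound, fix $x\in S$, let $z\in\partial S$ realise $d(x,\partial S)$, and push outward to $t=z+\eps_n\eta(z)$, where $\eta(z)$ is the outward unit normal at $z$, well defined because $\mbox{reach}(S)>0$ (Definition 4.4 and Theorem 4.8 in \cite{fed59}). Then $z$ is the metric projection of $t$ onto $S$, so $d(t,S)=\eps_n$ and $t\notin\hat S_n(\eps_n)$, while $z\in S\subset\hat S_n(\eps_n)$; hence the segment $[z,t]$ meets $\partial\hat S_n(\eps_n)$ at some $y$, and then $y\in\partial\mathcal B(Y_i,\eps_n)$ for some $i\in I_{bb}$. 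Since $\|y-z\|\le\eps_n$ and $\|y-Y_i\|=\eps_n$, the triangle inequality gives $\|x-Y_i\|\le d(x,\partial S)+2\eps_n$; applying this with $x$ ranging over the sample points yields $\hat R_n\le\sup_{x\in S}d(x,\partial S)+2\eps_n=\rho(S)+2\eps_n$. For the lower bound, take $x_0\in S$ with $d(x_0,\partial S)=\rho(S)$ and use $S\subset\hat S_n(\eps_n)$ to find $Y_{i_0}$ with $\|x_0-Y_{i_0}\|\le\eps_n$; then $\mathring{\mathcal B}(Y_{i_0},\rho(S)-\eps_n)\subset S\subset\hat S_n(\eps_n)$, so this open ball is interior to $\hat S_n(\eps_n)$, and no ball $\mathcal B(Y_j,\eps_n)$ contained in it can be a boundary ball, forcing $\|Y_{i_0}-Y_j\|\ge\rho(S)-2\eps_n$ for every $j\in I_{bb}$, i.e.\ $\hat R_n\ge\rho(S)-2\eps_n$. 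Combining the two bounds closes the estimate, and with it both parts of the theorem.

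\emph{Main obstacle.} The step I expect to fight with is not any single inequality but the bookkeeping around the boundary balls together with the invocation of the off-the-shelf results: verifying that the segment-crossing point $y$ genuinely certifies an index in $I_{bb}$ (and, dually, that a ball lying in the interior of $\hat S_n(\eps_n)$ cannot be one), and checking that $S=B(\M,R_1)$ really meets the hypotheses — positive reach, standardness, partial expandability — of the estimation results in \cite{crc:04} and of the normal-vector machinery in \cite{fed59} for the precise sequence $\eps_n$ specified in the statement.
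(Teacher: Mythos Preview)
Your proposal is correct and follows essentially the same route as the paper: the same reach/rolling/standardness setup via \cite{fed59}, \cite{cue12}, \cite{crc:04}, the same outward-normal segment argument for the upper bound, and the same deep-point ball argument for the lower bound, all packaged as the single estimate $|\hat R_n-\rho(S)|\le 2\eps_n$ (which the paper also uses, slightly less explicitly, to handle part (ii)). Your treatment of (ii) is in fact a bit cleaner than the paper's, since you exhibit $\rho(S)\ge R_1+\rho(\M)$ directly rather than just naming the constant.
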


\begin{theorem} \label{thnoise1}
{ Let $\M\subset \mathbb{R}^d$ be a compact set} such that $\emph{reach}(\M)=R_0>0$. Suppose that the sample $\mathcal{Y}_n=\{Y_1,\ldots,Y_n\}$ 
has a distribution with support $S=B({\mathcal M},R_1)$ for some $R_1<R_0$ with a density bounded from below by a constant $f_0>0$.  
Let us denote $\tilde{R}_n= \max_{i} d(Y_i, \partial C_r(\mathcal{Y}_n))$ where $C_r(\mathcal{Y}_n)$ denotes the $r$-convex hull of the sample, as defined in \eqref{rhull} 
for $r\leq \min(R_1,R_0-R_1)$.
\begin{itemize}
	\item[i)] If $\mathring{\M}=\emptyset$ { and for some $d^\prime<d$ $\M$ has a finite, strictly positive $d^\prime$-dimensional Minkowski content,  then, with probability one,
      \begin{equation} \label{ineqthbis}
      \left|\tilde{R}_n-R_1\right|= \mathcal{O}\big(\log(n)/n\big)^{\min(1/(d-d'),2/(d+1))},\ 
      \end{equation}}
      \item[ii)] if $\mathring{\M}\neq\emptyset$, then there exists $C>0$ such that, with probability one
\begin{equation} \label{ineqth2bis}
\left|\tilde{R}_n-R_1\right|>C \ \text{ for } n \text{ large enough}.
\end{equation}
\end{itemize} 

\begin{proof}
	
	Again, as shown in the proof of Theorem \ref{Th:noisy1}, 
	$\mbox{reach}(B({\mathcal M},R_1))\geq \mbox{reach}({\mathcal M})-R_1=R_0-R_1$; also $\mbox{reach}(\overline{B({\mathcal M},R_1)^c})\geq R_1$.   We now use Proposition 1 in \cite{cue12}; this result establishes that $\mbox{reach}(S)\geq r$ implies that $S$ is $r$-convex. According to this result we may conclude that 
	$B({\mathcal M},R_1)$ and $\overline{B({\mathcal M},R_1)^c}$ are both $r$-convex
	for $r=\min(R_1,R_0-R_1)>0$. 
	Note, in addition, that 
	by construction of $S=B({\mathcal M},R_1)$ we have that $\mathring{S_i}\neq \emptyset$ for every path-connected  component 
	$S_i\subset S$. So, we can use Theorem 3 in \cite{rod07}   (which establishes the rates of convergence in the estimation of an $r$-convex set using the $r$-convex hull of the sample)  to conclude 
	\begin{equation} \label{eqthrhull}
	d_H\big(\partial C_r(\mathcal{Y}_n),\partial S\big)=\mathcal{O}\big((\log(n) /n)^{2/(d+1)}\big), \text{ a.s.}
	\end{equation}

	Let us now prove that, with probability one, for $n$ large enough,
	\begin{equation}\label{th4inclu}
	B\big(\M,R_1-d_H(\partial C_r(\mathcal{Y}_n),\partial S)\big)\subset C_r(\mathcal{Y}_n).
	\end{equation}
	Proceeding by contradiction, let $x_n\in B\big(\M,R_1-d_H(\partial C_r(\mathcal{Y}_n),\partial S)\big)$ such that $x_n\notin C_r(\mathcal{Y}_n)$, let 
	$y_n$ be the projection of $x_n$ onto $\M$. It is easy to see that, for $n$ large enough, with probability one, $\M \subset C_r(\mathcal{Y}_n)$ then $y_n\in C_r(\mathcal{Y}_n)$. 
	Observe that,  from the definition of parallel set, 
	\begin{equation} \label{th4inclu2}
	B\big(\partial S,d_H(\partial C_r(\mathcal{Y}_n),\partial S)\big) 
	= B\big(\M,R_1+d_H(\partial C_r(\mathcal{Y}_n),\partial S)\big)\setminus \mathring B\big(\M,R_1-d_H(\partial C_r(\mathcal{Y}_n),\partial S)\big),
	\end{equation}
	then, there exists $z_n\in \partial C_r(\mathcal{Y}_n)\cap (x_n,y_n)$, $(x_n,y_n)$ being
	the open segment joining $x_n$ and $y_n$,  but then by \eqref{th4inclu2},  $d(z_n,\partial S)>d_H(\partial C_r(\mathcal{Y}_n),\partial S)$ which is a contradiction;
	this concludes the proof of \eqref{th4inclu}.

	 Now we can prove $i)$. Suppose that $\mathring{\M}=\emptyset$.  
	Then $R_1=\max_{x\in S}d(x,\partial S)=\max_{x\in\M}d(x,\partial S)=d_H(\M,\partial S)$. 
	Also, as $C_r({\mathcal Y}_n)\subset S$ thus 
	\begin{equation}\label{th4ineq1}
	\tilde R_n\leq R_1.
	\end{equation}
	
      For every observation $Y_i$ let $m_i$ denote its projection on $\M$; by \eqref{th4inclu} we have 
	$d(m_i,\partial C_r(\mathcal{Y}_n))\geq R_1-d_H(\partial C_r(\mathcal{Y}_n),\partial S)$ so that, from triangular inequality,
	 
	$d(m_i,Y_i)+d(Y_i,\partial C_r(\mathcal{Y}_n))\geq  R_1-d_H(\partial C_r(\mathcal{Y}_n),\partial S)$. Thus
	\begin{equation}\label{th4ineq2}
	\tilde{R}_n\geq R_1-d_H(\partial C_r(\mathcal{Y}_n),\partial S)-\min_id(Y_i,\M).
	\end{equation}
	
	We now analyze the order of the last term in \eqref{th4ineq2}. From the assumption of finiteness of the Minkowski content of $\M$, given a constant $A>0$ there exists a constant $c_{\M}>0$
	such that for $n$ large enough, 
	
	$$
	\mu_d\left(\mathcal{B}\big(\M,(A\log(n)/n)^{1/(d-d')}\big)\right)\geq c_\M A\log(n)/n.
	$$
	Thus, 
	$$P_X \big(\forall i, d(Y_i,\M)\geq  (A\log(n)/n)^{1/(d-d')}\big)\leq \big(1-f_0 c_\M A(\log(n)/n)\big)^n\leq n^{-f_0c_\M A}.$$
	If we take $A>1/(f_0c_\M)$ we obtain, from Borel-Cantelli lemma, 
	\begin{equation}\label{th4ineq3}
	\min_i d(Y_i,\M)={\mathcal O}\left((\log(n)/n)^{1/(d-d')}\right),\ \mbox{a.s.}
	\end{equation}
	
	Finally, \eqref{ineqthbis} is a direct consequence of \eqref{eqthrhull}, \eqref{th4ineq1}, \eqref{th4ineq2} and \eqref{th4ineq3}.
	
	The proof of $ii)$ is obtained as in Theorem \ref{Th:noisy1} part $ii)$.
\end{proof}

\end{theorem}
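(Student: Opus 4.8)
The plan is to mimic the structure of the proof of Theorem \ref{Th:noisy1}, but replacing the offset estimator by the $r$-convex hull $C_r(\mathcal{Y}_n)$ and invoking the known convergence rate for $r$-convex hull estimators instead of the one for Devroye--Wise estimators. The first step is to set up the geometry: using Corollary 4.9 in \cite{fed59} exactly as in Theorem \ref{Th:noisy1}, one gets $\mbox{reach}(S)\geq R_0-R_1$ and $\mbox{reach}(\overline{S^c})\geq R_1$, and then Proposition 1 in \cite{cue12} upgrades positive reach to $r$-convexity for both $S$ and $\overline{S^c}$ with $r=\min(R_1,R_0-R_1)$. Since every path-connected component of $S=B(\M,R_1)$ plainly has non-empty interior, Theorem 3 in \cite{rod07} applies and yields the Hausdorff rate $d_H(\partial C_r(\mathcal{Y}_n),\partial S)=\mathcal{O}((\log n/n)^{2/(d+1)})$ a.s. This handles the ``boundary estimation'' ingredient.

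Next, for part (i), one uses $\mathring{\M}=\emptyset$ to write $R_1=\max_{x\in S}d(x,\partial S)=d_H(\M,\partial S)$, which immediately gives the easy one-sided bound $\tilde R_n\leq R_1$ because $C_r(\mathcal{Y}_n)\subset S$. For the reverse bound the key is the inner inclusion $B(\M,R_1-d_H(\partial C_r(\mathcal{Y}_n),\partial S))\subset C_r(\mathcal{Y}_n)$; I would prove this by contradiction, taking a point $x_n$ in the left-hand set but outside $C_r(\mathcal{Y}_n)$, letting $y_n$ be its projection onto $\M$ (which lies in $C_r(\mathcal{Y}_n)$ since $\M\subset C_r(\mathcal{Y}_n)$ eventually), and finding a boundary point $z_n\in\partial C_r(\mathcal{Y}_n)$ on the segment $(x_n,y_n)$; the identity $B(\partial S,\eta)=B(\M,R_1+\eta)\setminus \mathring{B}(\M,R_1-\eta)$ forces $d(z_n,\partial S)>\eta$, contradicting the definition of $\eta=d_H(\partial C_r(\mathcal{Y}_n),\partial S)$. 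Combining this inclusion with the triangle inequality applied at each $Y_i$ and its projection $m_i$ onto $\M$ gives $\tilde R_n\geq R_1-d_H(\partial C_r(\mathcal{Y}_n),\partial S)-\min_i d(Y_i,\M)$.

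It then remains to control $\min_i d(Y_i,\M)$. Here the finiteness and positivity of the $d'$-dimensional Minkowski content of $\M$ is used to lower-bound $\mu_d(B(\M,t))$ by a constant times $t^{d-d'}$ for small $t$; plugging $t=(A\log n/n)^{1/(d-d')}$, using the lower density bound $f_0$, and applying Borel--Cantelli with $A$ large enough yields $\min_i d(Y_i,\M)=\mathcal{O}((\log n/n)^{1/(d-d')})$ a.s. The final rate $(\log n/n)^{\min(1/(d-d'),2/(d+1))}$ is then just the slower of the two contributions. Part (ii) is obtained exactly as in Theorem \ref{Th:noisy1}(ii): the argument for (i) actually shows $|\tilde R_n-\max_{x\in S}d(x,\partial S)|\to 0$, so when $\mathring{\M}\neq\emptyset$ the true ``thickness'' $R_1$ differs from $\max_{x\in S}d(x,\partial S)$ by a fixed positive gap, giving $|\tilde R_n-R_1|>C$ eventually a.s.

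The main obstacle I anticipate is the rigorous justification of the inner inclusion \eqref{th4inclu}: one must be careful that the projection $y_n$ of $x_n$ onto $\M$ is well-defined and lies inside $C_r(\mathcal{Y}_n)$ (which needs $\M\subset C_r(\mathcal{Y}_n)$ eventually a.s., itself a consequence of the $r$-convexity of $\M$'s neighborhood geometry and the rate in \eqref{eqthrhull}), and that the segment $(x_n,y_n)$ genuinely crosses $\partial C_r(\mathcal{Y}_n)$ at a point whose distance to $\partial S$ can be read off from the ``annular'' decomposition of a parallel set. Everything else is either a direct citation or a routine Borel--Cantelli estimate.
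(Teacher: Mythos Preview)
Your proposal is correct and follows essentially the same route as the paper's proof: the same geometric setup via \cite{fed59} and \cite{cue12}, the same appeal to Theorem~3 in \cite{rod07} for the rate \eqref{eqthrhull}, the same contradiction argument for the inner inclusion \eqref{th4inclu} using the annular decomposition \eqref{th4inclu2}, the same triangle-inequality lower bound \eqref{th4ineq2}, the same Minkowski-content/Borel--Cantelli control of $\min_i d(Y_i,\M)$, and the same reduction of part (ii) to Theorem~\ref{Th:noisy1}(ii). Even the step you flag as the main obstacle is precisely the one the paper isolates and proves separately.
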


\begin{remark}
	The assumption imposed on $\M$ in part (i) can be seen as an statement of $d^\prime$-dimension\-ality. For example if we assume that $\M$ is rectifiable then, from Theorem 3.2.39 in \cite{fed:69}, the $d'$-dimensional Hausdorff measure of $\M$, ${\mathcal H}^{d^\prime}(\M)$ coincides with the corresponding Minkowski content. Hence 
	$0<{\mathcal H}^{d^\prime}(\M)<\infty$ and, according to expression \eqref{Def-dimhaus}, 
	this entails $\mbox{dim}_H(\M)=d^\prime$. 
\end{remark}

\subsection{An index of closeness to lower dimensionality}
According to Theorem \ref{Th:noisy1} in the case $R_1=0$, the value $2\hat{R}_n/\widehat{\diam}(\M)$ (where $\widehat{\diam}(\M)=\max_{i\neq j}\break \|X_i-X_j\|$) can
be seen as an index of departure from low-dimensionality. Observe that if $\M=\overline{\mathring{\M}}$ we get $2\hat R_n/\widehat{\diam}(\M)\to 1$, a.s. and if $\M$ has empty interior, $2\hat R_n/\widehat{\diam}(\M)\to 0$ a.s.

 \section{A method to partially denoise the sample data} \label{sec:alg}

There are several situations in which we may speak of ``noise in the data'': we could first mention the ``outlier model'' in which the noise is given by a certain amount of outlying observations, far away from the central core of the data. Also, we might have a situation in which every observation is perturbed with a small amount of noise. We will present in this section a denoising proposal, dealing with the latter case and related to the models considered in the previous sections. Before presenting  this proposal we will let us briefly comment some references that, from different points of view, deal with the problem of noisy samples in geometric/statistical contexts.

Sometimes the term ``denoise'' is replaced with ``declutter'' in the literature on stochastic geometry. A general ``declutter algorithm'', depending on a single parameter has bee recently proposed in \cite{buc15}. This paper includes also a short interesting overview of the literature on the topic. In particular, the authors mention two main general declutter methodologies, namely procedures based on deconvolution (where the distribution generating the noise appears convolved with the ``true'' underlying model), see \cite{cai13}, and those based on thresholding, \cite{oze11}, where the data are ``cleaned'' using an auxiliary density estimator.  

Another interesting approach to the denoising idea, different to that followed in this paper, is given in \cite{chazal:11}. These authors tackle the identification of some geometric or topological features from samples that could include outliers. Again, they use the $r$-offsets (that is the $r$-parallel sets of the sample data and the target set $S$) as a fundamental tool. Such $r$-offsets are represented in terms of sublevel sets of appropriate functions, defined as a short of distance between a point and a set. The main contribution in the mentioned paper is to robustify (against outliers) such distance functions, and the corresponding sublevel sets, by replacing them with a new function that can be seen as a distance between a point and a probability distribution. 
A recent related approach, based on the use of kernel density estimates, can be found in \cite{phill:2015}.

The denoising idea is also alike to that of identifying (from a sample of points on the set $S$) the ``central part'' of the set, often called ``skeleton'' or ``medial axis'' of $S$. See \cite{cue14}  and references therein. In fact, the possible idea of defining a denoising procedure in terms of distance to the medial axis, could be seen as a sort of ``dual'' version of the method proposed in the present paper, based on the distance to the estimated boundary. 
 
Closely related ideas, ultimately relying on the notion of medial axis, are considered in \cite{dey15}, where a method for ``sparsification'' of a sample is proposed. The aim is also (as in the denoising case) to retain a subset of the original sample, which is assumed to be drawn on a manifold. In authors' words: \textit{``We sparsify the data so that the resulting set is locally uniform and is still good for homology
inference''}.  The proposed method is based on the ``lean feature size'' distance, which is intermediate between the well-know ``local feature size'' (defined in terms of the medial axis) and the ``weak local feature size''.
\subsection{The algorithm}\label{subsec:algorithm}

 Let $\M\subset \mathbb{R}^d$ be a compact set with ${\rm reach}(\M)=R_0>0$. Let $\mathcal{Y}_n=\{Y_1,\dots,Y_n\}$ be an iid  sample of a random variable $Y$, with absolutely continuous distribution whose support is the parallel set $S=B(\M,R_1)$ for some $0<R_1<R_0$.    We now propose an algorithm to get from  
$\mathcal{Y}_n$, a ``partially de-noised'' sample of points ${\mathcal Z}_m$ that allow us to estimate the target set $\M$, as established in Theorem \ref{thalg}.

The procedure works as follows:

\begin{enumerate}
	\item \textit{Take suitable  auxiliary estimators for $S$ and $R_1$}. Let $\hat{S}_n$ be an estimator of $S$ (based on $\mathcal{Y}_n$) such that $d_H(\partial \hat{S}_n,\partial S)<a_n$ eventually a.s., for some $a_n\rightarrow 0$. Let $\hat{R}_n$ be an estimator of $R_1$ such that $|\hat{R}_n-R_1|\leq e_n$ eventually a.s. for some $e_n\rightarrow 0$.
	\item \textit{Select a $\lambda$-subsample far from the estimated boundary of $S$}. Take $\lambda\in (0,1)$ and define $\mathcal{Y}^\lambda_m=\{Y^\lambda_1,\dots,Y^\lambda_m\}\subset \mathcal{Y}_n$ where $Y^\lambda_i\in \mathcal{Y}_m^\lambda$ if and only if $d(Y^\lambda_i,\partial \hat{S}_n)>\lambda \hat{R}_n$. 
	\item \textit{The projection + translation stage}. 
	For every $Y_i^\lambda\in \mathcal{Y}_m^\lambda$, we define $\mathcal{Z}_m=\{Z_1,\dots,Z_m\}$ as follows,
	\begin{equation} \label{denoise}
	Z_i=\pi_{\partial \hat{S}_n}(Y_i^\lambda)+\hat{R}_n\frac{Y_i^\lambda-\pi_{\partial \hat{S}_n}(Y_i^\lambda)}{\|Y_i^\lambda-\pi_{\partial \hat{S}_n}(Y_i^\lambda)\|},
	\end{equation}
	where $\pi_{\partial \hat{S}_n}(Y_i^\lambda)$ denotes the metric projection of $Y_i^\lambda$ on $\partial \hat{S}_n$.
\end{enumerate}
 
  \subsection{Asymptotics} 
The following result shows that the above de-noising procedure allows us to asymptotically recover the ``inner set'' $\M$.

\begin{theorem} \label{thalg}  Let $\M\subset \mathbb{R}^d$ be a compact set with ${\rm reach}(\M)=R_0>0$. Let $\mathcal{Y}_n=\{Y_1,\dots,Y_n\}$ be an iid sample of $Y$, with support $S=B(\M,R_1)$ for some $0<R_1<R_0$, and distribution $P_Y$, absolutely continuous with respect to the Lebesgue measure,   whose density $f$, is bounded from below by $f_0>0$.  Let $a_n$ and $e_n$ be, respectively, the convergence rates in the estimation of $\partial S$, as defined in the algorithm of Sunsection \ref{subsec:algorithm}.    Then, there exists $b_n=\mathcal{O}\left(\max(a_n^{1/3},e_n,\eps_n)\right)$ such that, with probability one,
for $n$ large enough,
	{$$d_H(\mathcal{Z}_m,\M)\leq b_n$$ }
where $\eps_n=c(\log(n)/n)^{1/d}$ with $c>(6/(f_0\omega_d))^{1/d}$ and $\mathcal{Z}_m$ denotes the denoised sample defined in the algorithm.  
\end{theorem}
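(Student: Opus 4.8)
The plan is to control the Hausdorff distance $d_H(\mathcal{Z}_m,\M)$ by the triangle-type bound
$$
d_H(\mathcal{Z}_m,\M)\ \leq\ \sup_{i}d(Z_i,\M)\ \vee\ \sup_{x\in\M}d(x,\mathcal{Z}_m),
$$
treating the two one-sided distances separately. For the first (every $Z_i$ is close to $\M$), fix $Y_i^\lambda\in\mathcal{Y}_m^\lambda$ and write $p=\pi_{\partial\hat S_n}(Y_i^\lambda)$, $q=\pi_{\partial S}(Y_i^\lambda)$. Since $\mathrm{reach}(\partial S)\geq r:=\min(R_1,R_0-R_1)>0$ (as established exactly as in the proofs of Theorems \ref{Th:noisy1} and \ref{thnoise1}, via Corollary 4.9 in \cite{fed59} and Propositions 1--2 in \cite{cue12}), the metric projection $\pi_{\partial S}$ is well-defined and Lipschitz on a tube around $\partial S$. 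The key geometric input is a \emph{stability of projections} estimate: because $d_H(\partial\hat S_n,\partial S)\leq a_n$, one has $\|p-q\|=\mathcal{O}(\sqrt{a_n})$ when $Y_i^\lambda$ is at distance of order $1$ from $\partial S$ (this is the familiar $\sqrt{\cdot}$ loss when passing from Hausdorff-closeness of sets to closeness of nearest-point projections, under a reach/rolling hypothesis); the restriction $d(Y_i^\lambda,\partial\hat S_n)>\lambda\hat R_n$ in Step 2 is precisely what keeps $Y_i^\lambda$ uniformly bounded away from $\partial S$, so the denominator $\|Y_i^\lambda-p\|$ is bounded below and the unit vector in \eqref{denoise} is stable. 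Then the exact point $\pi_{\partial S}(Y_i^\lambda)+R_1\frac{Y_i^\lambda-\pi_{\partial S}(Y_i^\lambda)}{\|Y_i^\lambda-\pi_{\partial S}(Y_i^\lambda)\|}$ lies on $\M$ (moving inward a distance $R_1$ along the inward normal from the boundary of an $R_1$-parallel set lands on $\M$), and $Z_i$ differs from it by $\mathcal{O}(\|p-q\|)+\mathcal{O}(|\hat R_n-R_1|)=\mathcal{O}(\sqrt{a_n}+e_n)$. Cubing versus square-rooting is bookkeeping: the stated $a_n^{1/3}$ rate will come from optimizing the threshold $\lambda\hat R_n$ against $a_n$, since points too close to $\partial\hat S_n$ must be discarded, and the trade-off between ``how many points survive'' and ``how stable their projections are'' yields the exponent $1/3$ rather than $1/2$.

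For the reverse inclusion ($\M$ is covered by the $Z_i$), I would argue that the denoising map is close to a surjection onto $\M$ at scale $\eps_n$. The idea: the $\eps_n$-offset of $\mathcal{Y}_n$ covers $S$ eventually a.s. (this is \eqref{2aster}, i.e. Theorem 4 and Proposition 1 in \cite{crc:04}, valid since $S$ is standard w.r.t. $P_Y$ by Proposition \ref{prop-stand} and partially expandable by its positive reach), so for every $m\in\M$ there is a sample point $Y_j$ within $\eps_n$ of the outward-normal ray through $m$; one checks $Y_j$ survives the $\lambda$-thresholding (it sits near $\M$, hence far from $\partial S\supset\partial\hat S_n$ up to $a_n$) and that its image $Z_j$ under \eqref{denoise} returns to within $\mathcal{O}(\eps_n+\sqrt{a_n}+e_n)$ of $m$, using the same projection-stability estimate plus the fact that projecting onto $\partial\hat S_n$ and then pushing inward by $\hat R_n$ approximately inverts the outward map $m\mapsto m+R_1\eta(m)$. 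Combining the two directions gives $d_H(\mathcal{Z}_m,\M)=\mathcal{O}(\max(a_n^{1/3},e_n,\eps_n))$, which is the claimed $b_n$.

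The main obstacle is the projection-stability step: quantifying how $\pi_{\partial\hat S_n}$ deviates from $\pi_{\partial S}$ purely from $d_H(\partial\hat S_n,\partial S)\leq a_n$, without any reach assumption on the \emph{estimated} set $\hat S_n$ (which is only a generic set estimator in the algorithm). The resolution is to project everything onto the regular set $\partial S$ — whose reach is positive — and to note that any point of $\partial\hat S_n$ lies within $a_n$ of $\partial S$, so the relevant quantity is $d(\pi_{\partial\hat S_n}(y),\partial S)\leq a_n$ combined with the lower bound on $\|y-\partial S\|$ coming from the $\lambda$-thresholding; a short lemma of the form ``if $\|y-\partial S\|\geq\rho$ and $z$ lies within $a_n$ of $\partial S$ then $\|z-\pi_{\partial S}(y)\|=\mathcal{O}(\sqrt{\rho\,a_n}/\rho)$ provided $a_n\ll\rho\wedge\mathrm{reach}(\partial S)$'' does the job, and it is here that the cube-root appears after balancing $\rho\asymp\lambda\hat R_n$ against $a_n$. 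Everything else — the offset-covering \eqref{2aster}, the standardness, the spacing bound $\eps_n$ — is imported verbatim from the earlier results.
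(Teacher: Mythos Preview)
Your overall two-sided Hausdorff strategy, the use of the offset-covering result from \cite{crc:04} for the reverse inclusion, and the observation that $\pi_{\partial S}(Y_i^\lambda)+R_1\eta_i=\pi_{\M}(Y_i^\lambda)$ are all correct and match the paper. But your account of where the exponent $1/3$ comes from is wrong, and as written the argument does not close. You say the cube root arises from ``optimizing the threshold $\lambda\hat R_n$ against $a_n$''; however, $\lambda\in(0,1)$ is a \emph{fixed} parameter of the algorithm, not a tuning sequence. More importantly, the $\lambda$-threshold keeps $l:=d(Y_i^\lambda,\partial S)$ bounded \emph{below} (so that $\hat l$ in the denominator of \eqref{denoise} is bounded away from zero), whereas the projection-stability bound actually degenerates when $R_1-l=d(Y_i^\lambda,\M)$ is small, i.e.\ when $Y_i^\lambda$ is close to $\M$, not to $\partial S$. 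Concretely, the constraint $\pi_{\partial\hat S_n}(Y_i^\lambda)\in \mathcal{B}(Y_i^\lambda,l+a_n)\setminus \mathcal{B}(\pi_\M(Y_i^\lambda),R_1-a_n)$ yields $\|p-q\|^2\le \frac{4lR_1}{R_1-l}\,a_n+a_n^2$, which blows up as $l\to R_1$; no choice of $\lambda$ cures this, since the surviving points have $l\in(\lambda R_1-o(1),\,R_1]$ and therefore include points arbitrarily close to $\M$.

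The paper's resolution is a case split on $R_1-l$ against the threshold $a_n^{1/3}$. If $R_1-l\le a_n^{1/3}$ then $Y_i^\lambda$ is already within $a_n^{1/3}$ of $\M$, and one checks directly that $\|Z_i-Y_i^\lambda\|=|\hat R_n-\hat l|\le a_n^{1/3}+e_n+a_n$, giving $d(Z_i,\M)\le 2a_n^{1/3}+a_n+e_n$. If instead $R_1-l\ge a_n^{1/3}$, the lune bound above gives $\|p-q\|^2\le 4R_1^2 a_n^{2/3}+a_n^2$, and a planar computation (write everything in the plane through $Y_i^\lambda$, $p$, $\pi_\M(Y_i^\lambda)$) converts this into $\|Z_i-\pi_\M(Y_i^\lambda)\|=\mathcal{O}(\sqrt{a_n^{2/3}+a_n^{1/3}e_n+e_n^2})$. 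The choice $a_n^{1/3}$ is exactly what balances these two cases. Your reverse-inclusion argument (pick $Y_i$ within $\eps_n$ of $x\in\M$, check it survives thresholding since $d(Y_i,\partial S)\ge R_1-\eps_n>\lambda\hat R_n$ eventually, then apply the near-$\M$ case to get $\|Z_i-Y_i\|\le \eps_n+a_n+e_n$) is essentially the paper's, so once you repair the forward direction with the correct case split the proof goes through.
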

	\begin{proof} 
	 First let us prove that    $d_H(\mathcal{Y}_n,S)\leq \eps_n$ eventually a.s.. To do that, we will use Theorem 4 in \cite{crc:04} as it was done in Theorem \ref{Th:noisy1}. 
	 By Corollary 4.9 in \cite{fed59}, $\text{reach}(\overline{S^c})>0$ and then by Proposition \ref{prop-stand}, $S$ is standard. Again by Corollary 4.9
	 in \cite{fed59} $\text{reach}(\partial S)>0$, which entails, by Proposition \ref{lembea} that $S$ fulfils the outside rolling condition. Using Theorem 4 and Proposition 1 in \cite{crc:04} 
	 we conclude that,   $d_H(\mathcal{Y}_n,S)\leq \eps_n$ eventually a.s.. 
	 
	  Let us fix $Y_i^\lambda \in \mathcal{Y}_m^\lambda$.
	
	Let us denote $l=\|Y_i^\lambda-\pi_{\partial S}(Y_i^\lambda)\|$ and $\eta_i=(Y_i^\lambda-\pi_{\partial S}(Y_i^\lambda))/l$, let us
	introduce two estimators $\hat{l}=\|Y_i^\lambda-\pi_{\partial \hat{S}_n}(Y_i^\lambda)\|$ and
	$\hat{\eta}_i=(Y_i^\lambda-\pi_{\partial \hat{S}_n}(Y_i^\lambda))/\hat{l}$. With this notation $Z_i=\pi_{\partial \hat{S}_n}(Y_i^\lambda)+\hat{R}_n\hat{\eta}_i$. 
	Recall that since ${\rm reach(\M)}>R_1$ we have (by Corollary 4.9  in \cite{fed59}) that $\pi_{\M}(Y_i^\lambda)=\pi_{\partial S}(Y_i^\lambda)+R_1\eta_i$,\\
	
	For all $Y_i^\lambda$ there exists a point $x\in \partial \hat{S}_n$ with $||x-\pi_{\partial S}(Y_i^{\lambda})||\leq a_n$
	so that, by triangular inequality:  $d(Y_{i}^{\lambda},\partial \hat{S}_n)\leq l+a_n$ that is, 
	\begin{equation}\label{inclu1}
	 \pi_{\partial \hat{S}_n}(Y_i^\lambda)\in \mathcal{B}(Y_i^\lambda,l+a_n).
	\end{equation}
	
	Now let us prove that 
	\begin{equation}\label{inclu2}
	 \pi_{\partial\hat{S}_n}(Y_i^\lambda) \in \mathcal{B}(Y_i^\lambda,l-a_n)^c.
	\end{equation}
	Suppose by contradiction that  $\pi_{\partial\hat{S}_n}(Y_i^\lambda) \in \mathcal{B}(Y_i^\lambda,l-a_n)$, 
	since $d_H(\partial S_n,\partial S)<a_n$ there exists $t\in \partial S$ such that $\|t-\pi_{\partial\hat{S}_n}(Y_i^\lambda)\|< a_n$,
	but then $l=d(Y_i^\lambda,\partial S)\leq \|Y_i^\lambda-\pi_{\partial\hat{S}_n}(Y_i^\lambda)\|+
	\|\pi_{\partial\hat{S}_n}(Y_i^\lambda)-t\|<l$. That concludes the proof of \eqref{inclu2}.
	
	By \eqref{inclu1} and \eqref{inclu2} we have:
	\begin{equation}\label{indegllchap}
	l-a_n\leq \hat{l}\leq l+a_n.
	\end{equation}
    In the same way it can be proved that  
	\begin{equation}\label{inclu3}
	\pi_{\partial\hat{S}_n}(Y_i^\lambda) \in \mathcal{B}(\pi_\M(Y_i^\lambda),R_1-a_n)^c.
	\end{equation}	
	Let us prove that there exists $C_0>0$ such that 
	\begin{equation}\label{thalgstep1}
	 \text{ for all }
	 Y_i^\lambda\in \mathcal{Y}^\lambda_m \text{ , }\|Z_i-\pi_\M(Y_i^\lambda)\|\leq C_0\sqrt{a_n^{2/3}+a_n^{1/3}e_n+e_n^2}.
	\end{equation}
	
	First consider the case $0\leq R_1-l\leq a_n^{1/3}$, which implies that $\|Y_i^{\lambda}-\pi_\M(Y_i^{\lambda})\|\leq a_n^{1/3}$. 
	Notice that, by \eqref{indegllchap}, $\|Y_i^{\lambda}-Z_i\|=|\hat{R}_n-\hat{l}|\leq a_n^{1/3}+e_n+a_n$, 
	finally we get
	\begin{equation}\label{distcase1}
	 \|Z_i-\pi_\M(Y_i^{\lambda})\|\leq 2a_n^{1/3}+a_n+e_n.
	\end{equation}	
	Now we consider the case $R_1-l\geq a_n^{1/3}$, recall that by \eqref{inclu1} and \eqref{inclu3}
	we have.
	
	\begin{equation} \label{eq1th4}
	\pi_{\partial \hat{S}_n}(Y_i^\lambda)\in \mathcal{B}(Y_i^\lambda,l+a_n)\setminus \mathcal{B}(\pi_\M(Y_1^\lambda),R_1-a_n).
	\end{equation}
		
	In Figure \ref{fignoise} it is represented the case for which $\|\pi_{\partial \hat{S}_n}(Y_i^\lambda)-\pi_{\partial S}(Y_i^\lambda)\|$ takes 
	its largest possible value. 
		
	\begin{figure}[!h]
		\centering
		\includegraphics[scale=1]{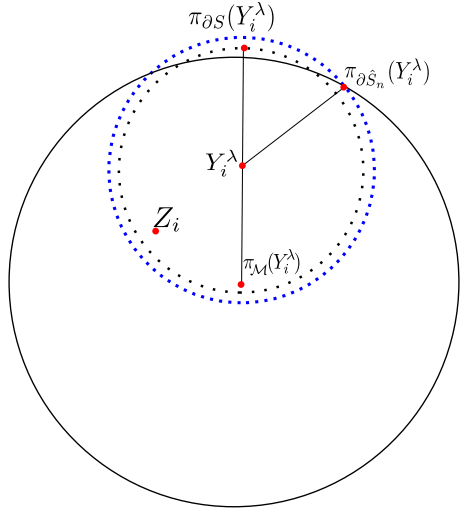}
		\caption{In solid black line $\mathcal{B}(\pi_\M(Y_1^\lambda),R_1-a_n)$, in dashed line $\mathcal{B}(Y_i^\lambda,\hat{l})$ and $\mathcal{B}(Y_i^\lambda,l)$.}%
		\label{fignoise}
	\end{figure}
	 To find an upper bound for such value, let us first note that the points $\pi_{\partial \hat{S}_n}(Y_i^\lambda)$, $Y_i^\lambda$ and $\pi_{\partial \hat{S}_n}(Y_i^\lambda)+R_1\hat{\eta}_i$
	are aligned. And the points	$\pi_{\partial S}(Y_i^\lambda)$, $Y_i^\lambda$ and $\pi_\mathcal{M}(Y_i^\lambda)$,  are aligned. So all of them are
	in the same plane $\Pi$.  Let us now apply a translation T in order to get, 
	$T(\pi_{\partial S}(Y_1^\lambda))=0$. Let us consider in $\Pi$ a coordinate system $(x,y)$ such that $\pi_{\M}(Y_i^\lambda)=(0,-R_1)$. 
	
	Let $(x_1,y_1)$ be the coordinates of the point $\pi_{\partial \hat{S}_n}(Y_i^\lambda)$. From \eqref{eq1th4}  we get
	\begin{align}
	x_1^2+(y_1+l)^2&\leq (l+a_n)^2 \label{ineq1} \\
	x_1^2+(y_1+R_1)^2 &\geq (R_1-a_n)^2\label{ineq3}
	\end{align}
	
	If we multiply \eqref{ineq3} by $-l$, we get $-l(x_1^2+y_1^2)-2y_1lR_1\leq -la_n^2+2a_nlR_1$ and if we multiply \eqref{ineq1} 
	by $R_1$ we get $R_1(x_1^2+y_1^2)+2y_1lR_1\leq 2R_1a_nl+a_n^2R_1$.
	Then, if we sum this two inequalities we get,
	\begin{equation} \label{eqalg1}
	x_1^2+y_1^2\leq \frac{4lR_1}{R_1-l}a_n+a_n^2 \leq 4R_1^2a_n^{2/3}+a_n^2.
	\end{equation}
	 Notice that $Z_i\in \Pi$,  let us denote 
	 $(x,y)$ the coordinates of $Z_i$ in $\Pi$, then $$x=x_1-\hat{R}_n\frac{x_1}{\|Y_i^\lambda-\pi_{\partial \hat{S}_n}(Y_i^\lambda)\|}=x_1-\hat{R}_n\frac{x_1}{\hat{l}}$$
	and
	$$y=y_1-\hat{R}_n\frac{l+y_1}{\|Y_i^\lambda-\pi_{\partial \hat{S}_n}(Y_i^\lambda)\|}=y_1-\hat{R}_n\frac{l+y_1}{\hat{l}}.$$
	 Since the coordinates of $\pi_\M(Y^\lambda_i)$ are $(0,-R_1)$ we get that 
	  \begin{multline} \label{eqalg2}
	   \|Z_i-\pi_\M(Y^\lambda_i)\|^2=\|(x,y+R_1)\|^2=(x_1^2+y_1^2)\left(\frac{\hat{l}-\hat{R}_n}{\hat{l}}\right)^2+\left(R_1-\hat{R}_n\frac{l}{\hat{l}}\right)^2\\
	   +2y_1\left|\frac{\hat{l}-\hat{R}_n}{\hat{l}}\right|\left(R_1-\hat{R}_n\frac{l}{\hat{l}}\right).
	  \end{multline}   
	   Observe that $|l-\hat{l}|\leq a_n$ and $|R_1-\hat{R}_n|\leq e_n$ eventually almost surely. We can bound 
	    $|\frac{\hat{l}-\hat{R}_n}{\hat{l}}|\leq 2$ and 
	   $\hat{l}\geq \lambda R_1/2$, then 
	 \begin{equation}\label{eqalg2.0}
	    \left|R_1-\hat{R}_n\frac{l}{\hat{l}}\right|=\left|\frac{R_1(\hat{l}-l)-l(\hat{R}_n-R_1)}{\hat{l}}\right|\leq 2\frac{a_n+e_n}{\lambda}.
	 \end{equation}
	 Finally by equations \eqref{eqalg1}, \eqref{eqalg2} and \eqref{eqalg2.0}, 
	if $R_1-l\geq a_n^{1/3}$  (note that this is used in the proof of \eqref{eqalg1}), there exists $C_0$ such that 

	\begin{equation} \label{eqalg3}
	\|Z_i-\pi_\M(Y^\lambda_i)\|\leq C_0\sqrt{a_n^{2/3}+a_n^{1/3}e_n+e_n^2}.
	\end{equation}
	where (see \eqref{eqalg1}) we are using $y_1=\mathcal{O}(a_n^{1/3})$ here.
	That concludes the proof of \eqref{thalgstep1}.\\

	Let us finally prove that $\M\subset B(\mathcal{Z}_m,a_n+e_n+2\eps_n)$ eventually, a.s. 
	As indicated at the beginning of the proof,  we have $d_H(\mathcal{Y}_n,S)\leq \eps_n$ eventually a.s., thus for all $x\in \M$, there exists $Y_i\in \mathcal{Y}_n$ such that
	$\|x-Y_i\|\leq \eps_n$. For $n$ large enough we have $Y_i\in \mathcal{Y}_m^{\lambda}$. Following the same ideas used to prove \eqref{distcase1} we obtain
	$\|Z_i-Y_i \|\leq \eps_n+a_n+e_n$. 
	By triangular inequality we get
	\begin{equation}\label{eqalg4}
	 \M\subset B(\mathcal{Z}_m,a_n+e_n+2\eps_n)\text{ eventually, a.s.}
	\end{equation}
	Combining \eqref{distcase1}, \eqref{eqalg3} and \eqref{eqalg4} we obtain,
	$$d_H(\mathcal{Z}_m,\M) = 
	\mathcal{O}\left(\max(a_n^{1/3},e_n,\sqrt{a_n^{2/3}+a_n^{1/3}e_n+e_n^2},\eps_n)\right)=\mathcal{O}\left(\max(a_n^{1/3},e_n,\eps_n)\right).$$	
	
	\end{proof}

\begin{remark} Note that, when $\mathring{\M}=\emptyset$,  the result simplifies since, according to Theorem \ref{Th:noisy1} we can take $e_n=2\epsilon_n$ and, according to \cite{crc:04}  (Prop. 1 and Th. 4) $a_n=\epsilon_n$. Therefore, in this case $b_n=a_n^{1/3}$.
\end{remark}

The two following corollaries give the exact convergence rate for the denoising process introduced before, using the centers of the boundary balls (Corollary \ref{denoiseBB}), and the boundary of the $r$-convex hull (Corollary \ref{denoiseCr}), as estimators of the boundary of the support.
\begin{corollary}\label{denoiseBB} 
Let $\M\subset \mathbb{R}^d$ be a compact set such that $\emph{reach}(\M)=R_0>0$.
Let  $\mathcal{Y}_n=\{Y_1,\dots,Y_n\}$ be an iid sample of a distribution 
$P_Y$ with support $B(\M,R_1)$ for some $0<R_1<R_0$. Assume that $P_Y$ is 
absolutely continuous with respect to the Lebesgue measure and the density $f$, 
is bounded from below by a constant $f_0>0$.
Let $\eps_n=c(\log(n)/n)^{1/d}$ and  $c>(6/(f_0\omega_d))^{1/d}$.

Given $\lambda\in (0,1)$, let  $\mathcal{Z}_n$ be the points obtained after the 
denoising process using $\hat{R}_n$ to estimate
$R_1$ and $\{Y_i,i\in I_{bb}\}$ as an estimator of $\partial S$ where
 $I_{bb}=\{j:\mathcal{B}(Y_j,\eps_n) \text{ is a boundary ball}\}$. Then,
$$d_H(\mathcal{Z}_m,\M)=\mathcal{O}\big((\log(n)/n)^{1/(3d)}\big),\ \mbox{a.s.}$$
\end{corollary}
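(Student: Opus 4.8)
The plan is to combine Theorem \ref{thalg} with the convergence rates already established for the two auxiliary estimators used in this particular instantiation of the denoising algorithm. Recall that Theorem \ref{thalg} gives $d_H(\mathcal{Z}_m,\M)=\mathcal{O}\left(\max(a_n^{1/3},e_n,\eps_n)\right)$ where $a_n$ is the rate for estimating $\partial S$ and $e_n$ is the rate for estimating $R_1$. So the whole proof reduces to identifying $a_n$ and $e_n$ for the present choice of estimators and then simplifying the maximum.

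First I would verify that we are indeed in the hypotheses of Theorem \ref{thalg}: $\M$ is compact with $\mathrm{reach}(\M)=R_0>0$, the sample $\mathcal{Y}_n$ is iid on $S=B(\M,R_1)$ with $0<R_1<R_0$, and the density is bounded below by $f_0>0$ --- all of which are assumed in the corollary. Next, I would pin down $a_n$: here $\partial S$ is estimated by the set of centers of the boundary balls $\{Y_i : i\in I_{bb}\}$ of the Devroye--Wise estimator $\hat S_n(\eps_n)$. By Proposition 1 and Theorem 4 in \cite{crc:04} (applied exactly as in the proof of Theorem \ref{Th:noisy1}, using that $\mathrm{reach}(\overline{S^c})>0$ and $\mathrm{reach}(\partial S)>0$ by Corollary 4.9 in \cite{fed59}, so that $S$ is standard and satisfies the outside rolling condition), one gets $d_H(\{Y_i:i\in I_{bb}\},\partial S)=\mathcal{O}(\eps_n)$ a.s.; that is, $a_n=\eps_n=c(\log n/n)^{1/d}$. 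For $e_n$, the estimator of $R_1$ is $\hat R_n=\max_{Y_i}\min_{j\in I_{bb}}\|Y_i-Y_j\|$, and Theorem \ref{Th:noisy1}(i) gives $|\hat R_n-R_1|\le 2\eps_n$ eventually a.s., so $e_n=2\eps_n$, of the same order as $a_n$.

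Then I would plug these into the bound of Theorem \ref{thalg}:
\begin{equation*}
d_H(\mathcal{Z}_m,\M)=\mathcal{O}\left(\max(a_n^{1/3},e_n,\eps_n)\right)
=\mathcal{O}\left(\max(\eps_n^{1/3},\eps_n)\right)=\mathcal{O}\left(\eps_n^{1/3}\right),
\end{equation*}
since $\eps_n\to 0$ makes $\eps_n^{1/3}$ the dominant term. Finally, substituting $\eps_n=c(\log n/n)^{1/d}$ gives $\eps_n^{1/3}=\mathcal{O}\big((\log n/n)^{1/(3d)}\big)$, which is the claimed rate. The one subtlety to check --- and the only place where anything could go wrong --- is that Theorem \ref{thalg} requires $a_n$ and $e_n$ to be \emph{exactly} the rates appearing in its hypotheses (i.e. that the $\hat S_n$ and $\hat R_n$ chosen here are legitimate instances of the abstract estimators in the algorithm of Subsection \ref{subsec:algorithm}); this is immediate once we observe that $\mathrm{reach}(\M)>R_1$ forces the geometric regularity of $S$ needed for both \cite{crc:04} and Theorem \ref{Th:noisy1} to apply, exactly as was done inside the proof of Theorem \ref{thalg} itself. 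Hence the proof is essentially a one-line corollary, with the cube-root bottleneck $\eps_n^{1/3}$ — inherited from the worst case $R_1-l\le a_n^{1/3}$ in the proof of Theorem \ref{thalg} — being the term that determines the final exponent $1/(3d)$.
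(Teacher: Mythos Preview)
Your proof is correct and matches the paper's own approach: the corollary is a direct specialization of Theorem~\ref{thalg} with $a_n=\eps_n$ (via \cite{crc:04}, as in the proof of Theorem~\ref{Th:noisy1}) and $e_n=2\eps_n$ (Theorem~\ref{Th:noisy1}(i)), exactly as indicated in the Remark preceding the corollary. The only implicit assumption you (and the paper's statement) pass over is that invoking Theorem~\ref{Th:noisy1}(i) for the rate $e_n=2\eps_n$ requires $\mathring{\M}=\emptyset$, which is understood from the denoising context.
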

 
Using the assumption of $r$-convexity for $\M$ (see Definitions  \ref{def-rconvexity} and \ref{def-reach} and the subsequent comments) in the construction of the set estimator, we can replace $\hat R_n$ with $\tilde R_n$ (see Theorem \ref{thnoise1}). Then, at the cost of some additional complexity in the numerical implementation, a faster convergence rate can be obtained. This is made explicit in the following result.  
\begin{corollary}\label{denoiseCr} 
Let $\M\subset \mathbb{R}^d$ be a compact $d'$-dimensional  set (in the sense of Theorem \ref{thnoise1}, i)  such that $\emph{reach}(\M)=R_0>0$.
Let  $\mathcal{Y}_n=\{Y_1,\dots,Y_n\}$ be an iid sample of a distribution 
$P_Y$ with support $B(\M,R_1)$ for some $0<R_1<R_0$. Assume that $P_Y$ is 
absolutely continuous with respect to the Lebesgue measure and the density $f$, 
is bounded from below by a constant $f_0>0$.

For a given $\lambda\in (0,1)$, let $\mathcal{Z}_n$ be the set of the points obtained after 
the denoising process, based on the
estimator $\partial C_r(\mathcal{Y}_n)$ of $\partial S$ (for some $r$ with $0<r<\min(R_0-R_1,R_1)$) and the
estimator $\tilde{R}_n$ of $R_1$.

Then,

$$d_H(\mathcal{Z}_m,\M)=\mathcal{O}\big((\log(n)/n)^{2/(3(d+1))}\big)  \mbox{ a.s.} $$
 
 \end{corollary}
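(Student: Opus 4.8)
The plan is to combine the general denoising bound of Theorem \ref{thalg} with the specific convergence rate established for the $r$-convex hull estimator in Theorem \ref{thnoise1}. Recall that Theorem \ref{thalg} states that the denoised sample satisfies $d_H(\mathcal{Z}_m,\M)=\mathcal{O}\left(\max(a_n^{1/3},e_n,\eps_n)\right)$, where $a_n$ is the rate in the estimation of $\partial S$ (i.e.\ $d_H(\partial\hat S_n,\partial S)\le a_n$ eventually a.s.) and $e_n$ is the rate in the estimation of $R_1$ (i.e.\ $|\hat R_n-R_1|\le e_n$ eventually a.s.), while $\eps_n=c(\log(n)/n)^{1/d}$. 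So the proof is essentially a matter of plugging in the correct values of $a_n$ and $e_n$ for the $r$-convex-hull-based procedure.

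First I would verify that the hypotheses of Theorem \ref{thalg} are met: $\M$ compact with $\mathrm{reach}(\M)=R_0>0$, $0<R_1<R_0$, and $P_Y$ absolutely continuous with density bounded below by $f_0>0$ — all of which are assumed in the corollary. Next I would identify $a_n$: by \eqref{eqthrhull} in the proof of Theorem \ref{thnoise1} (which invokes Theorem 3 in \cite{rod07}, using that both $S=B(\M,R_1)$ and $\overline{S^c}$ are $r$-convex for $r=\min(R_1,R_0-R_1)>0$, and that every path-connected component of $S$ has nonempty interior), we have $d_H(\partial C_r(\mathcal{Y}_n),\partial S)=\mathcal{O}((\log(n)/n)^{2/(d+1)})$ a.s., so we may take $a_n=(\log(n)/n)^{2/(d+1)}$. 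Then I would identify $e_n$: since $\M$ is $d'$-dimensional in the sense of Theorem \ref{thnoise1}(i), that theorem gives $|\tilde R_n-R_1|=\mathcal{O}((\log(n)/n)^{\min(1/(d-d'),\,2/(d+1))})$ a.s., so $e_n=(\log(n)/n)^{\min(1/(d-d'),2/(d+1))}$.

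Finally I would carry out the rate comparison in $\max(a_n^{1/3},e_n,\eps_n)$. We have $a_n^{1/3}=(\log(n)/n)^{2/(3(d+1))}$. Note $\eps_n=(\log(n)/n)^{1/d}$ up to a constant, and since $1/d > 2/(3(d+1))$ for all $d\ge 1$ (equivalently $3(d+1)>2d$, i.e.\ $d+3>0$), the term $\eps_n$ is negligible compared to $a_n^{1/3}$. For $e_n$: the exponent is $\min(1/(d-d'),2/(d+1))\ge 2/(d+1) > 2/(3(d+1))$, so $e_n$ is also negligible compared to $a_n^{1/3}$. Therefore the dominant term is $a_n^{1/3}=(\log(n)/n)^{2/(3(d+1))}$, which yields $d_H(\mathcal{Z}_m,\M)=\mathcal{O}\big((\log(n)/n)^{2/(3(d+1))}\big)$ a.s., as claimed.

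There is no real obstacle here — the corollary is a direct specialization of Theorem \ref{thalg}. The only point requiring a moment's care is the bookkeeping in the exponent comparison: one must check that the denoising procedure's intrinsic $\eps_n$ term and the $R_1$-estimation error $e_n$ are both dominated by $a_n^{1/3}$, so that the $\partial S$-estimation error (amplified by the cube root coming from the geometry of the projection step in Theorem \ref{thalg}) is genuinely the bottleneck. Since $a_n$ here (order $(\log n/n)^{2/(d+1)}$) is much better than the $\eps_n=(\log n/n)^{1/d}$ rate obtained from boundary balls, the cube-root penalty still leaves a rate strictly better than the $(\log n/n)^{1/(3d)}$ of Corollary \ref{denoiseBB}, consistent with the comment preceding the statement.
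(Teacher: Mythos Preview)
Your approach is exactly what the paper intends: the corollary is stated without proof because it is a direct specialization of Theorem \ref{thalg} with the rates $a_n$ and $e_n$ supplied by Theorem \ref{thnoise1} (via \eqref{eqthrhull} and \eqref{ineqthbis}), and your bookkeeping of the three competing exponents is the right way to finish.

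One small slip to fix: you write ``the exponent is $\min(1/(d-d'),2/(d+1))\ge 2/(d+1)$'', but of course a minimum is $\le$ each of its arguments, and this inequality can fail (e.g.\ $d=5$, $d'=1$ gives $\min(1/4,1/3)=1/4<1/3$). The conclusion you need is still true, but the correct justification is that \emph{each} of $1/(d-d')$ and $2/(d+1)$ separately exceeds $2/(3(d+1))$: for the first, $3(d+1)>2(d-d')$ reduces to $d+3+2d'>0$; for the second, $3(d+1)>d+1$ is immediate. Hence their minimum also exceeds $2/(3(d+1))$, and $e_n$ is dominated by $a_n^{1/3}$ as you claim.
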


\section{Estimation of lower-dimensional measures}\label{sec:minkowski}

\subsection{Noiseless model}

In this section, we go back to the noiseless model, that is, we assume that the sample points $X_1,\ldots,X_n$ are drawn according to a distribution whose support is $\M$. The target is to estimate the $d'$-dimensional Minkowski content of $\M$, as given by
\begin{equation} \label{minkest}
\lim_{\epsilon\rightarrow 0} \frac{\mu_d(B(\M,\epsilon))}{\omega_{d-d'}\epsilon^{d-d'}}=L_{0}(\M)<\infty.
\end{equation}
This is just (alongside with Hausdorff measure, among others) one of the possible ways to measure lower-dimensional sets; see \cite{mat95} for background.   

In recent years, the problem of estimating the $d'$-dimensional measures of a compact set from a random sample has received some attention in the literature. The simplest situation corresponds to the full-dimensional case $d'=d$. Any estimator $\M_n$ of $\M$ consistent with respect to the \textit{distance in measure}, that is $\mu_d(\M_n\Delta \M) \to 0$ (in prob. or a.s., where $\Delta$ stands for the symmetric difference), will provide a consistent estimator for $\mu_d(\M)$.  In fact, as a consequence of Th. 1 in \cite{dw:80}
(recall that $S$ is compact here) this will the always the case (in probability) when $\M_n$ is the \textit{offset} estimator (\ref{dw}), provided that $\mu_d$ is absolutely continuous (on $\M$)  with respect to $P_X$ together with $r_n\to 0$ and $nr_n^d\to\infty$. 

Other more specific estimators of $\mu_d(\M)$ can be obtained by imposing some shape assumptions on $\M$, such as convexity or  $r$-convexity, which are incorporated to the estimator $\M_n$; see \cite{ari16}, \cite{bal15},  \cite{par11}.

Regarding the estimation of lower-dimensional measures, with $d'<d$, the available literature mostly concerns the problem of estimating $L_0(\M)$, $\M$ being the boundary of some compact support $S$. The sample model is also a bit different, as it is assumed that we have sample points \textit{inside and outside} $S$. Here, typically, $d'=d-1$; see \cite{cue07}, \cite{cue13}, \cite{jim11}.  

Again, in the case $\M=\partial S$ with $d=2$, under the extra assumption of $r$-convexity for $S$, the consistency of the plug-in estimator $L_0(\partial C_r({\mathcal X}_n))$ of $L_0(\partial S)$ is proved in \cite{cue12} under the usual \textit{inside} model (points taken on $S$). Finally, in \cite{ber14}, assuming an \textit{outside} model (points drawn in $B(S,R)\setminus S$), estimators of $\mu_d(S)$ and $L_0(\partial S)$ are proposed, under the condition of \textit{polynomial volume} for $S$ 

From the perspective of the above references, our contribution here (Th. \ref{consistency} below) could be seen as a sort of lower-dimensional extension of the mentioned results  of type $\mu_d(\M_n)\to\mu_d(\M)$ regarding volume estimation. But, obviously, in this case the Lebesgue measure $\mu_d$ must be replaced with a lower-dimensional counterpart, such as the Minkowski content (\ref{minkest}). We will also need the following lower-dimensional version of the standardness property given in Definition \ref{estandar}.

\begin{definition} \label{stand} A Borel probability measure 
	 defined on a $d'$-dimensional set $\M\subset \mathbb{R}^d$ (considered with the topology induced by $\mathbb{R}^d$) is said to be standard with respect to the $d'$-dimensional Lebesgue measure $\mu_{d'}$ if there exist $\lambda$ and $\delta$ such that, for all $x\in \M$,
	$$P_X(\mathcal{B}(x,r))=\mathbb{P}(X\in \mathcal{B}(x,r)\cap \M)\geq \delta \mu_{d'}(\mathcal{B}(x,r)), \ \mbox{\it for}\ 0\leq r\leq \lambda.$$
\end{definition}

\begin{remark} \label{remstand} Observe that, by Lemma 5.3 in (\cite{sma08}) this condition is fulfilled if $P_X$ has a density $f$ bounded from below and $\M$ is a manifold with positive \textit{condition number} (also known as positive reach). Standardness of the distribution has also been used in \cite{cue04}, \cite{chazal:13}, \cite{aam15}. 
\end{remark}

\begin{theorem} \label{consistency} Let $\mathcal{X}_n=\{X_1,\dots,X_n\}$ be an iid sample drawn according to a distribution $P_X$ on a set $\M\subset \mathbb{R}^d$. Let us assume that the distribution $P_X$ is standard with respect to the $d'$-dimensional Lebesgue measure (see \ref{stand}) and that there exists the $d'$ Minkowski content $L_0(\M)<\infty$ of $\M$, given by (\ref{minkest}).
	Let us take $r_n$ such that $r_n\rightarrow 0$ and $(\log(n)/n)^{1/d'}=o(r_n)$, then
	\begin{itemize}
		\item[(i)] 		\begin{equation} \label{minkest2}
		\lim_{n\rightarrow \infty}\frac{ \mu_d \big(B(\mathcal{X}_n,r_n)\big)}{\omega_{d-d'}r_n^{d-d'}}=L_0(\M)\quad a.s.
		\end{equation}	
		\item[(ii)]  If $\emph{reach}(\M)=R_0>0$, then
	$$\frac{ \mu_d \big(B(\mathcal{X}_n,r_n)\big)}{\omega_{d-d'}r_n^{d-d'}}-L_0(\M)=\mathcal{O}\Big(\frac{\beta_n}{r_n}+r_n\Big),$$
		where $\beta_n=\mathcal{O}\big(\log(n)/n\big)^{1/d'}$.	
	\end{itemize}

	\begin{proof} (i) First we will see that, following the same ideas as in Theorem 3 in \cite{crc:04} it can be readily proved that, with probability one, for $n$ large enough, \
		\begin{equation} \label{orddh}
		d_H(\mathcal{X}_n,\M)\leq C\beta_n.
		\end{equation}	
		for some large enough constant $C>0$.	 In order to see (\ref{orddh}), let us consider $M_\Delta$ a minimal covering of $\M$, with balls of radius $\Delta$ centred in $N_\Delta$ points belonging to $\M$. Let us prove that $N_\Delta=\mathcal{O}(\Delta^{-d'})$. Indeed, since $M_\Delta$ is a minimal covering it is clear that $\mu_d(B(\M,\Delta))\geq N_\Delta \omega_d (\Delta/2)^d$, and then 
		
		$$\frac{\mu_d(B(\M,\Delta))}{\omega_{d-d'}\Delta^{d-d'}}\geq \frac{ N_\Delta \omega_d(\Delta/2)^{d}}{\omega_{d-d'}\Delta^{d-d'}}=c_1 N_\Delta \Delta^{d'},$$
		$c_1$ being a positive constant. Since there exists $L_0(\M)$ it follows that $N_\Delta=\mathcal{O}(\Delta^{-d'})$. Then the proof of (\ref{orddh}) follows easily from the standardness of $P_X$ and $N_\Delta=\mathcal{O}(\Delta^{-d'})$, so we will omit it.
		
		Now, in order to prove (\ref{minkest2}), let us first prove that, if we take  $\alpha_n=1-C\beta_n/r_n$ ,
		\begin{equation} \label{eqconsistency}
		B(\M, \alpha_n r_n) \subset B(\mathcal{X}_n , r_n)\subset B(\M,r_n) \quad a.s..
		\end{equation}
		To prove this, consider $x_n\in B(\M, \alpha_n r_n)$, then there exists $t_n\in \M$ such that $x_n\in \mathcal{B}(t_n,\alpha_n r_n)$. Since $d_H(\mathcal{X}_n, \M)\leq C\beta_n$ there exists  $y_n\in \mathcal{B}(t_n,C\beta_n)$, $y_n\in \mathcal{X}_n$. It is enough to prove that $x_n\in \mathcal{B}(y_n,r_n)$. But this follows from the fact that, eventually a.s.,
		 
		\begin{equation*}
		\|y_n-x_n\|\leq \|x_n-t_n\|+\|t_n-y_n\|\leq \alpha_n r_n+C\beta_n= r_n.
		\end{equation*}
		 Then, from \eqref{eqconsistency}
		
		\begin{equation} \label{eqth4}
		\alpha_n^{d-d'} \frac{\mu\big(B(\M, \alpha_n r_n)\big)}{\omega_{d-d'} \alpha_n^{d-d'} r_n^{d-d'}} - L_0(\M) \leq
		\frac{\mu \big(B(\mathcal{X}_n, r_n)\big)}{\omega_{d-d'}r_n^{d-d'}}-L_0(\M)\leq \frac{\mu\big(B(\M, r_n)\big)}{\omega_{d-d'} r_n^{d-d'}}-L_0(\M).
		\end{equation}
		Since there exists $L_0(\M)$, the right hand side of \eqref{eqth4} goes to zero.
		To prove that the left hand side of \eqref{eqth4} goes to zero, let us observe that, as $\alpha_n=1-C\beta_n/r_n$, and $\alpha_n^{d-d'}=1-\mathcal{O}(\beta_n/r_n)$, then
		\begin{equation} \label{eq2th4}
		\alpha_n^{d-d'} \frac{\mu\big(B(\M, \alpha_n r_n)\big)}{\omega_{d-d'} \alpha_n^{d-d'} r_n^{d-d'}} - L_0(\M)=\frac{\mu\big(B(\M, \alpha_n r_n)\big)}{\omega_{d-d'} \alpha_n^{d-d'} r_n^{d-d'}}-\mathcal{O}(\beta_n/r_n)-L_0(\M),
		\end{equation}
		since $\alpha_n\rightarrow 1$ and $\beta_n/r_n\rightarrow 0$ we get 
		$$\lim_{n\rightarrow \infty} \frac{\mu\big(B(\M, \alpha_n r_n)\big)}{\omega_{d-d'} r_n^{d-d'}}=L_0(\M)\quad a.s..$$
		(ii)  The assumption $\emph{reach}(\M)=R_0>0$ allow us to ensure that $\M$ has a polynomial volume in the interval $[0,r_0)$. This means that, for $r<R_0$, $\mu \big(B(\M,r)\big)=P_d(r)$ where $P_d(r)$ is a polynomial of degree at most $d$; this is a classical result due to \citet[Th. 5.6]{fed59}. Since we assume that the $d'$-Minkowski content $L_0(\M)$ is finite, this polynomial volume condition entails that the coefficient to the $d-d'$ term is $\omega_{d-d'}L_0(\M)$. Then,
		$$\frac{\mu(B(\M,r_n))}{\omega_{d-d'}r_n^{d-d'}}=L_0(\M)+r_n A(\M)+o(r_n),$$ 
		for some constant $A(\M)$.
		Now the proof follows from \eqref{eqth4} and \eqref{eq2th4}.
	\end{proof}
\end{theorem}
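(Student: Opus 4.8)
The plan is to reduce the problem to two ingredients: the small-$\eps$ behaviour of $\mu_d(B(\cdot,\eps))$, which is governed by the very definition \eqref{def-min} of Minkowski content, and a sharp almost-sure bound on how well the sample fills $\M$ in Hausdorff distance. So the first step I would carry out is to show that, with probability one and for $n$ large, $d_H(\mathcal{X}_n,\M)\le C\beta_n$ with $\beta_n=(\log n/n)^{1/d'}$ for a suitable constant $C$. The engine here is the covering estimate $N_\Delta=\mathcal{O}(\Delta^{-d'})$, where $N_\Delta$ is the cardinality of a minimal covering of $\M$ by balls of radius $\Delta$ centred on $\M$: a maximal $\Delta$-packing of $\M$ is at once a $\Delta$-net and has $(\Delta/2)$-balls that are disjoint and lie inside $B(\M,\Delta/2)$, so $N_\Delta\,\omega_d(\Delta/2)^d\le\mu_d(B(\M,\Delta/2))$, and finiteness of $L_0(\M)$ forces the right-hand side to be $\mathcal{O}(\Delta^{d-d'})$, giving the claim. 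Feeding this into the standardness hypothesis (which gives $P_X(\mathcal{B}(t,\Delta))\ge\delta\mu_{d'}(\mathcal{B}(t,\Delta))$ for $\Delta\le\lambda$ and $t\in\M$), a union bound over the $N_\Delta$ covering balls taken with $\Delta$ of order $\beta_n$, and the Borel--Cantelli lemma, yields the Hausdorff rate exactly as in Theorem~3 of \cite{crc:04}.

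Granting this, put $\alpha_n=1-C\beta_n/r_n$; the hypothesis $(\log n/n)^{1/d'}=o(r_n)$ makes $\alpha_n\to1$. I would then verify the double inclusion $B(\M,\alpha_n r_n)\subset B(\mathcal{X}_n,r_n)\subset B(\M,r_n)$ a.s.\ for $n$ large: the right inclusion is immediate from $\mathcal{X}_n\subset\M$, and for the left one, given $x\in B(\M,\alpha_n r_n)$ pick $t\in\M$ with $\|x-t\|\le\alpha_n r_n$ and, using $d_H(\mathcal{X}_n,\M)\le C\beta_n$, a point $y\in\mathcal{X}_n$ with $\|t-y\|\le C\beta_n$, so $\|x-y\|\le\alpha_n r_n+C\beta_n=r_n$. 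Dividing the $\mu_d$-measures of these three nested sets by $\omega_{d-d'}r_n^{d-d'}$ and using monotonicity of $\mu_d$ gives the sandwich $$\alpha_n^{d-d'}\,\frac{\mu_d\big(B(\M,\alpha_n r_n)\big)}{\omega_{d-d'}(\alpha_n r_n)^{d-d'}}\;\le\;\frac{\mu_d\big(B(\mathcal{X}_n,r_n)\big)}{\omega_{d-d'}r_n^{d-d'}}\;\le\;\frac{\mu_d\big(B(\M,r_n)\big)}{\omega_{d-d'}r_n^{d-d'}}.$$ Since $r_n\to0$ and $\alpha_n r_n\to0$, the upper bound tends to $L_0(\M)$ by \eqref{def-min}, and so does $\mu_d(B(\M,\alpha_n r_n))/(\omega_{d-d'}(\alpha_n r_n)^{d-d'})$; as $\alpha_n^{d-d'}\to1$, the squeeze gives part (i).

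For part (ii) I would upgrade the limit to a first-order expansion. When $\text{reach}(\M)=R_0>0$, Federer's tube formula \cite[Th.~5.6]{fed59} makes $\eps\mapsto\mu_d(B(\M,\eps))$ a polynomial of degree at most $d$ on $[0,R_0)$; finiteness of the $d'$-dimensional Minkowski content forces every power $\eps^{j}$ with $j<d-d'$ to have vanishing coefficient and the coefficient of $\eps^{d-d'}$ to equal $\omega_{d-d'}L_0(\M)$, whence $\mu_d(B(\M,\eps))/(\omega_{d-d'}\eps^{d-d'})=L_0(\M)+A(\M)\eps+o(\eps)$ for some constant $A(\M)$. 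Inserting this into the sandwich of part (i), the upper side contributes an error $\mathcal{O}(r_n)$, while replacing $r_n$ by $\alpha_n r_n$ on the lower side costs $\mathcal{O}(1-\alpha_n^{d-d'})=\mathcal{O}(\beta_n/r_n)$; adding the two yields the stated rate $\mathcal{O}(\beta_n/r_n+r_n)$.

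The step I expect to be the real obstacle is the first one: getting the almost-sure Hausdorff rate $d_H(\mathcal{X}_n,\M)=\mathcal{O}((\log n/n)^{1/d'})$ for a set that is only assumed $d'$-dimensional and standard, with no manifold or reach hypothesis available in part (i). This forces one to extract the covering bound $N_\Delta=\mathcal{O}(\Delta^{-d'})$ purely from the finiteness of $L_0(\M)$, and to choose $C$ large enough relative to $\delta$ and $\omega_{d'}$ so that the union-bound probabilities are summable. Once this is secured, the remainder is the soft nesting argument together with, for (ii), the classical tube formula.
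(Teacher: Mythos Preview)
Your proposal is correct and follows essentially the same route as the paper: obtain $d_H(\mathcal{X}_n,\M)=\mathcal{O}(\beta_n)$ from the covering bound $N_\Delta=\mathcal{O}(\Delta^{-d'})$ (deduced from finiteness of $L_0(\M)$) together with standardness and Borel--Cantelli, then sandwich $B(\mathcal{X}_n,r_n)$ between $B(\M,\alpha_n r_n)$ and $B(\M,r_n)$ with $\alpha_n=1-C\beta_n/r_n$, and for (ii) invoke Federer's tube formula to get the $\mathcal{O}(r_n)$ term. The only cosmetic difference is that you derive $N_\Delta=\mathcal{O}(\Delta^{-d'})$ via a maximal packing whose half-radius balls sit in $B(\M,\Delta/2)$, whereas the paper argues directly with a minimal covering inside $B(\M,\Delta)$; both yield the same estimate.
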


\begin{remark} In the case of sets with positive reach, part (b) 
suggests to take $r_n^2 =\max_i\min_{j\neq i}\|X_i-X_j\|$ since we know by Theorem 1 in
\cite{penrose:99} that $r_n^2=\mathcal{O}\big((\log(n)/n)^{1/d'})$  that gives the optimal convergence rate.
\end{remark}
\subsection{Noisy Model}

The estimation of the Minkowski content in the noisy model has been tackled in \cite{ber14}, where the random sample is assumed to have uniform distribution in the parallel set $U$.  
In this section we will see that even if the sample is not uniformly distributed on $B(\M,R_1)$ for some $0<R_1<R_0={\rm reach}(\M)$, it is still possible, by applying first the de-noising algorithm introduced in Section \ref{sec:alg}, to estimate $L_0(\M)$.
Following the notation in Section \ref{sec:alg}, let $\mathcal{Y}_n$ be an iid sample of a random variable $Y$ with support $B(\M,R_1)$, let us denote $\mathcal{Z}_m$ the de-noised sample defined by \eqref{denoise}. The estimator is defined as in \eqref{minkest2} but replacing $\mathcal{X}_n$ with  $\mathcal{Z}_m$. Although the subset $\mathcal{Z}_m$ is not an iid sample (since the random variables $Z_i$ are not independent), the consistency is based on the fact that $\mathcal{Z}_m$ converge in Hausdorff distance to $\M$, as we will prove in the following theorem.

\begin{theorem} With the hypothesis and notation of Theorem \ref{thalg}, if $\max(a_n^{1/3},e_n,\eps_n)=o(r_n)$ where $\eps_n=c(\log(n)/n)^{1/d}$  with $c>(6/(f_0\omega_d))^{1/d}$. Then,
		\begin{equation} \label{minknoise}
		\lim_{n\rightarrow \infty} \frac{\mu_{d}(B(\mathcal{Z}_m,r_n))}{\omega_{d-d'}r_n^{d-d'}}=L_0(\M)\quad a.s..
		\end{equation}
\end{theorem}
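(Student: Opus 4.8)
The plan is to deduce the result entirely from the Hausdorff consistency already established in Theorem \ref{thalg} and then mimic, almost verbatim, the sandwiching argument in the proof of Theorem \ref{consistency}(i), with $\mathcal{Z}_m$ playing the role of $\mathcal{X}_n$. Theorem \ref{thalg} supplies a deterministic sequence $b_n=\mathcal{O}\big(\max(a_n^{1/3},e_n,\eps_n)\big)$ with $d_H(\mathcal{Z}_m,\M)\le b_n$ eventually, a.s.; under the standing hypothesis $\max(a_n^{1/3},e_n,\eps_n)=o(r_n)$ this forces $b_n/r_n\to 0$, hence for $n$ large $\mathcal{Z}_m\neq\emptyset$ and $\alpha_n:=1-b_n/r_n\in(0,1)$ with $\alpha_n\to 1$.

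First I would establish, eventually a.s., the two-sided inclusion
$$B(\M,\alpha_n r_n)\subset B(\mathcal{Z}_m,r_n)\subset B(\M,r_n+b_n).$$
The right inclusion is immediate from $\mathcal{Z}_m\subset B(\M,b_n)$ and the triangle inequality. For the left one, $d_H(\mathcal{Z}_m,\M)\le b_n$ gives that every $x\in\M$ lies within $b_n$ of some $Z_i$, so a point within $\alpha_n r_n$ of $\M$ lies within $\alpha_n r_n+b_n=r_n$ of $\mathcal{Z}_m$. Applying $\mu_d$, dividing by $\omega_{d-d'}r_n^{d-d'}$, and inserting the appropriate powers of $\alpha_n r_n$ and $r_n+b_n$ yields
$$\alpha_n^{\,d-d'}\,\frac{\mu_d\big(B(\M,\alpha_n r_n)\big)}{\omega_{d-d'}(\alpha_n r_n)^{d-d'}}\ \le\ \frac{\mu_d\big(B(\mathcal{Z}_m,r_n)\big)}{\omega_{d-d'}r_n^{d-d'}}\ \le\ (1+b_n/r_n)^{d-d'}\,\frac{\mu_d\big(B(\M,r_n+b_n)\big)}{\omega_{d-d'}(r_n+b_n)^{d-d'}}.$$

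Then I would pass to the limit. Since $\alpha_n r_n\to 0$ and $r_n+b_n\to 0$, the existence of the $d'$-dimensional Minkowski content (a hypothesis carried over into Theorem \ref{thalg} from Theorem \ref{consistency}) forces both flanking quotients to converge to $L_0(\M)$, while $\alpha_n^{\,d-d'}\to 1$ and $(1+b_n/r_n)^{d-d'}\to 1$. The squeeze theorem then gives \eqref{minknoise}.

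The only point needing care is that $\mathcal{Z}_m$ is \emph{not} an i.i.d. sample, so Theorem \ref{consistency} cannot be invoked directly; but this is harmless, since the displayed chain of inclusions and inequalities is purely deterministic once we are on the event $\{d_H(\mathcal{Z}_m,\M)\le b_n\}$, which holds eventually with probability one by Theorem \ref{thalg}. Accordingly, I expect the main (and rather mild) obstacle to be bookkeeping: aligning the various ``eventually, a.s.'' qualifiers and checking that the particular sequence $b_n$ produced by Theorem \ref{thalg} does satisfy $b_n=o(r_n)$ — which is precisely what the assumption $\max(a_n^{1/3},e_n,\eps_n)=o(r_n)$ guarantees.
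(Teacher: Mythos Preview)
Your proposal is correct and follows essentially the same route as the paper: invoke Theorem~\ref{thalg} to get $d_H(\mathcal{Z}_m,\M)\le b_n$ with $b_n/r_n\to 0$, set $\alpha_n=1-b_n/r_n$, sandwich $B(\mathcal{Z}_m,r_n)$ between two parallel sets of $\M$, and apply the Minkowski-content limit on both sides. The only difference is that on the upper side you use $B(\M,r_n+b_n)$ whereas the paper writes $B(\M,r_n)$; your version is in fact the careful one, since $\mathcal{Z}_m$ need not lie exactly on $\M$, and it changes nothing in the limit because $(1+b_n/r_n)^{d-d'}\to 1$.
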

\begin{proof}The proof is analogous to the one in Theorem \ref{consistency}. Observe that in Theorem \ref{thalg} we proved that $d_H(\mathcal{Z}_m,\M)\leq b_n$, for some $b_n=\mathcal{O}(\max(a_n^{1/3},e_n,\eps_n))$, then $b_n/r_n\rightarrow 0$. As we did Theorem \ref{consistency} if we take $\alpha_n=1-b_n/r_n$, then, with probability one, 
	\begin{equation*}
	B(\M, \alpha_n r_n) \subset B(\mathcal{Z}_m , r_n) \subset B(\M,r_n),
	\end{equation*} then we get
		\begin{equation*}
		\alpha_n^{d-d'} \frac{\mu\big(B(\M, \alpha_n r_n)\big)}{\omega_{d-d'} \alpha_n^{d-d'} r_n^{d-d'}} - L_0(\M) \leq
		\frac{\mu \big(B(\mathcal{Z}_n, r_n)\big)}{\omega_{d-d'}r_n^{d-d'}}-L_0(\M)\leq \frac{\mu\big(B(\M, r_n)\big)}{\omega_{d-d'} r_n^{d-d'}}-L_0(\M),
		\end{equation*}
		from where it follows 
		\begin{equation*}
		\alpha_n^{d-d'} \frac{\mu\big(B(\M, \alpha_n r_n)\big)}{\omega_{d-d'} \alpha_n^{d-d'} r_n^{d-d'}} - L_0(\M)=\frac{\mu\big(B(\M, \alpha_n r_n)\big)}{\omega_{d-d'} \alpha_n^{d-d'} r_n^{d-d'}}-\mathcal{O}(b_n/r_n)-L_0(\M).
		\end{equation*}
		 Since $\alpha_n\rightarrow 1$ and $b_n/r_n\rightarrow 0$ we get \eqref{minknoise}. 
	\end{proof}
	
\section{Computational aspects and simulations}\label{sec:comput}

We discuss here some theoretical and practical aspects regarding the implementation of the algorithms. We present also some simulations and numerical examples. 

\subsection{Identifying the boundary balls}
The cornerstone of the practical use of Theorem 1 is the effective identification of the boundary balls. The following proposition provides the basis for such identification, in terms of the Voronoi cells of the sample points.  
Recall that, given a finite set $\{x_1,\dots,x_n\}$, the Voronoi cell associated with the point $x_i$ is defined by ${\rm \text{Vor}}(x_i)=\{x:d(x,x_i)\leq d(x,x_j) \text{ for all } i\neq j\}$.
\begin{proposition} \label{Prop:propalg} 
	Let $\mathcal{X}_n=\{X_1,\ldots,X_n\}$ be an $iid$ sample of points, in $\mathbb{R}^d$, drawn according to a distribution $P_X$, absolutely continuous with respect to 
	the Lebesgue measure. Then, with probability one, for all $i=1,\dots,n$ and all $r>0$, $\sup\{\|z-X_i\|,z\in \emph{Vor}(X_i)\}\geq r$ if and only if $\mathcal{B}(X_i,r)$  
	is a boundary ball for the Devroye-Wise estimator (\ref{dw}).
	
	\begin{proof} Let us take $r>0$ and $X_i$ such that there exists $z\in \partial \mathcal{B}(X_i,r)\cap \mbox{Vor}(X_i)\neq \emptyset$, 
		let us prove that $z\in \partial \hat{S}_n(r)$. Observe that since $z\in \mbox{Vor}(X_i)$, $d(z,\mathcal{X}_n\setminus X_i)\geq r$  thus $d(z,\mathcal{X}_n)=r$. 
		Reasoning by contradiction suppose that $z\in \mathring{\hat{S}}_n$ then, with probability one, there exists $j_0$ such that $z\in\mathring{\mathcal{B}}(X_{j_0},r)$ and so
		$\|z-X_{j_0}\|<r$ that is a contradiction.
		
		Now to prove the converse implication let us assume that $\mathcal{B}(X_i,r)$ is a boundary ball, 
		then there exists $z\in \partial\mathcal{B}(X_i,r)$ such that $z\in \partial \hat{S}_n(r)$. Let us prove that $d(z,\mathcal{X}_n\setminus X_i)\geq r$ 
		(from where it follows that $z \in \mbox{Vor}(X_i)$ ). Suppose that $d(z,\mathcal{X}_n\setminus X_i)<r$, then there exists $X_j\neq X_i$ such that $d(z,X_j)<r$ and 
		then $\mathcal{B}\big(z,r-d(z,X_j)\big)\subset \mathring{\hat{S}}_n(r)$.
	\end{proof}
\end{proposition}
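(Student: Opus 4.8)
The statement is an ``if and only if'', so I would prove the two implications separately and dispatch the easy one first.

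\textbf{Direction ($\Leftarrow$): if $\mathcal{B}(X_i,r)$ is a boundary ball then $\rho_i:=\sup_{z\in\mathrm{Vor}(X_i)}\|z-X_i\|\ge r$.} This needs no probabilistic input. Pick $y\in\partial\mathcal{B}(X_i,r)$ with $y\in\partial\hat S_n(r)$. Each open ball $\mathring{\mathcal{B}}(X_j,r)$ is an open subset of $\hat S_n(r)$, so $y\notin\mathring{\mathcal{B}}(X_j,r)$ for every $j$, i.e. $\|y-X_j\|\ge r$ for all $j$; equivalently, if $\|y-X_j\|<r$ for some $j$ then $\mathcal{B}\big(y,r-\|y-X_j\|\big)\subset\hat S_n(r)$ is a neighbourhood of $y$, contradicting $y\in\partial\hat S_n(r)$. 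Since $\|y-X_i\|=r$, we get $\|y-X_i\|\le\|y-X_j\|$ for all $j\neq i$, hence $y\in\mathrm{Vor}(X_i)$ and $\rho_i\ge\|y-X_i\|=r$.

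\textbf{Direction ($\Rightarrow$): if $\rho_i\ge r$ then $\mathcal{B}(X_i,r)$ is a boundary ball.} Put $G_i=\mathrm{int}\,\mathrm{Vor}(X_i)=\{w:\|w-X_i\|<\|w-X_j\|\ \forall j\neq i\}$; this is open and convex, contains $X_i$ (a.s. the $X_j$ are distinct), and $\overline{G_i}=\mathrm{Vor}(X_i)$, so $\sup_{G_i}\|\cdot-X_i\|=\rho_i$. If $r<\rho_i$, the image of the connected set $G_i$ under the continuous map $w\mapsto\|w-X_i\|$ is an interval containing $0$ with supremum $\rho_i$, hence contains $[0,\rho_i)\ni r$; choose $z\in G_i$ with $\|z-X_i\|=r$. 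Then $\|z-X_j\|>r$ for every $j\neq i$, so $z$ has a neighbourhood on which $\hat S_n(r)$ coincides with $\mathcal{B}(X_i,r)$, whence $z\in\partial\mathcal{B}(X_i,r)\subset\partial\hat S_n(r)$ and $\mathcal{B}(X_i,r)$ is a boundary ball. If on the other hand $r>\rho_i$, then any $w\in\partial\mathcal{B}(X_i,r)$ has $\|w-X_i\|=r>\rho_i$, so $w\notin\mathrm{Vor}(X_i)$, i.e. $\|w-X_j\|<r$ for some $j\neq i$ and $w\in\mathring{\hat S}_n(r)$; thus $\partial\mathcal{B}(X_i,r)\cap\partial\hat S_n(r)=\emptyset$, which matches the ``iff'' since here the statement to be proved is that $\mathcal{B}(X_i,r)$ is \emph{not} a boundary ball.

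\textbf{The boundary case $r=\rho_i$, and where the hard work lies.} When $\rho_i<\infty$ the cell $\mathrm{Vor}(X_i)$ is compact, the supremum is attained at a vertex $v$ with $\|v-X_i\|=\rho_i=r$, and since $v\in\mathrm{Vor}(X_i)$ we have $d(v,\mathcal{X}_n)=r$, so $v$ lies in no open ball $\mathring{\mathcal{B}}(X_j,r)$. To finish it suffices to show $v\notin\mathring{\hat S}_n(r)$, i.e. that $v$ is not absorbed into the interior of the \emph{union} even though it sits on the boundary of each ball through it. I expect this to be the main obstacle, and it is exactly the point where the absolute continuity of $P_X$ enters: almost surely the sample is in general position (no $d+2$ points cospherical, circumcentres of $(d+1)$-subsets non-degenerate), and for such configurations one must argue that $\mathring{\hat S}_n(r)=\bigcup_j\mathring{\mathcal{B}}(X_j,r)$ — a point lying in $\mathring{\hat S}_n(r)$ but on the boundary of every ball through it would have to be equidistant, at distance $r$, from at least $d+1$ sample points and lie in their convex hull, and the general-position argument has to exclude this (the delicate part being uniformity in both $i$ and $r$). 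Once that probabilistic fact is secured, combining it with the three cases above yields the equivalence for all $i$ and all $r>0$, almost surely.
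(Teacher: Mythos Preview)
Your argument for ``boundary ball $\Rightarrow\rho_i\ge r$'' and for the strict case ``$\rho_i>r\Rightarrow$ boundary ball'' (choosing $z$ in the \emph{open} Voronoi cell, so that no other closed ball even meets $z$) is correct and in fact cleaner than the paper's, which does not isolate the strict case and simply asserts that, with probability one, $z\in\mathring{\hat S}_n(r)$ forces $z\in\mathring{\mathcal B}(X_{j_0},r)$ for some $j_0$. You are also right that the entire difficulty sits at $r=\rho_i$, where the only candidate witness is a Voronoi vertex $v$.

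The gap is that the general-position fix you sketch cannot work, because the obstruction is not a null event. General position rules out $d{+}2$ cospherical sample points, but $d{+}1$ of them lying on a common sphere about their Delaunay circumcentre $v$ is generic, and nothing prevents $v$ from lying in the \emph{interior} of their convex hull. When it does, the vectors $X_j-v$ ($j\in J$) are not contained in any closed half-space, and an elementary expansion of $\|v+tu-X_j\|^2$ shows that a full neighbourhood of $v$ is covered by $\bigcup_{j\in J}\mathcal B(X_j,\rho_i)$, so $v\in\mathring{\hat S}_n(\rho_i)$. A robust two-dimensional instance: put $X_1$ near the origin and $X_2,\dots,X_7$ near the vertices of a regular hexagon about it; the farthest vertex $v$ of $\mathrm{Vor}(X_1)$ is the circumcentre of a nearly equilateral Delaunay triangle $X_1X_jX_{j+1}$ and lies strictly inside it, so for $r=\rho_1$ the ball $\mathcal B(X_1,r)$ is \emph{not} a boundary ball even though $\rho_1\ge r$; every other point of $\partial\mathcal B(X_1,\rho_1)$ lies outside $\mathrm{Vor}(X_1)$ and hence in some open $\mathcal B(X_k,\rho_1)$. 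This set of configurations is open in $(\mathbb R^2)^7$ and thus has positive probability under any absolutely continuous $P_X$, so the ``for all $r$'' equivalence genuinely fails at $r=\rho_i$. The paper's proof glosses over exactly this point; the statement is rescued by replacing $\ge$ by $>$ (equivalently, excluding the finitely many values $r\in\{\rho_1,\dots,\rho_n\}$), which is all the downstream algorithm needs since almost surely the fixed radius $r_n$ misses that finite set.
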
 

\subsection{An algorithm to detect empty interior in the noiseless case using Theorem \ref{Th:noiseless}}

In order to use in practice Theorem 1 to detect lower-dimensionality in the noiseless case, we need to fix a sequence $r_n\downarrow 0$ under the conditions indicated in  Theorem \ref{Th:noiseless} (ii).  Note that this requires to assume lower bounds for the ``thickness'' constant $\rho({\mathcal M})=\sup d(x,\partial{\mathcal M})$ and the  standardness constant $\delta$ (see Definition \ref{def-stand}) as well as an upper bound for the radius of the outer rolling ball. 

Now, according to Theorem \ref{Th:noiseless}, and Proposition \ref{Prop:propalg}, we will use the following algorithm. 
\begin{itemize}
	\item[1)] For $i=1,\dots,n$, let $V^i=\{V_1^i,\ldots V_{k_i}^i\}$ be the vertices of $\text{Vor}(X_i)$,
	\item[2)] Let $\delta_i=\sup\{\|z-X_i\|,z\in \text{Vor}(X_i)\}=\max\{\|X_i-V_k^i\|,1\leq k\leq k_i\}$, since $\text{Vor}(X_i)$ is a convex polyhedron. In the case that
	$\text{Vor}(X_i)$ is an unbounded cell we put $\delta_i=\infty$. Define $\delta_0=\min_i \delta_i$.
	\item[3)] Decide $\mathring{\M}\neq \emptyset$ if and only if $\delta_0 \geq r_n$.
\end{itemize}

\subsection{On the estimation of the maximum distance to the boundary}
Theorems \ref{Th:noisy1} and \ref{thnoise1} involve the calculation of quantities such as $d(x,\partial \hat{S}_n(\epsilon_n))$ and $d(x,\partial C_r({\mathcal Y}_n))$, where $\hat{S}_n(\epsilon_n)$ is a Devroye-Wise estimator of type (\ref{dw}) and $C_r({\mathcal Y}_n)$ is the $r$-convex hull \eqref{rhull} of ${\mathcal Y}_n$. 

It is somewhat surprising to note that, in spite of the much simpler structure of $\hat{S}_n(\epsilon_n)$ when compared to $C_r({\mathcal Y}_n)$, the distance to the boundary $d(x,\partial C_r({\mathcal Y}_n))$ can be calculated in a simpler, more accurate way than the analogous quantity $d(x,\partial \hat{S}_n(\epsilon_n))$ for the Devroye-Wise estimator $\hat{S}_n(\epsilon_n))$. 

Indeed note that $d(x,\partial C_r(\mathcal{Y}_n))$ is relatively simple to calculate; this is done in \cite{ber12} in the two-dimensional case although can be in fact used in any dimension. 
Observe first that $\partial C_r(\mathcal{Y}_n))$ is included in a finite union of spheres of radius $r$, with centres in $Z=\{z_1,\dots,z_m\}$. Then $d(x,\partial C_r(\mathcal{Y}_n))=\min_{z_i\in Z}\|x-z_i\|-r$.  In order to find $Z$ we need to compute the Delaunay triangulation. Recall that the Delaunay triangulation, $\text{Del}(\mathcal{Y}_n)$, is defined as follows. Let $\tau \subset \mathcal{Y}_n$,
$$\tau \in \text{Del}(\mathcal{Y}_n) \quad \text{ if and only if} \quad  \bigcap_{Y_i\in \tau} \text{Vor}(Y_i)\neq \emptyset.$$
Observe finally, for any dimension, $\bigcap_{Y_i\in \tau} \text{Vor}(Y_i)\neq \emptyset$ is a segment or a half line. If $\tau_i$ is the $d$-dimensional simplex with vertices $\{Y_{i_1},\ldots,Y_{i_{d}}\}\subset \partial \mathcal{B}(z_i,r)$, the point $z_i$ can be obtained as $\bigcap_{Y^i_j\in \tau_i} \text{Vor}(Y_i)\cap \mathcal{B}(Y^i_1,r)$.

\subsection{Experiments}

The general aim of these experiments is not to make an extensive, systematic empirical study. We are just trying to show that the methods and algorithm proposed here can be implemented in practice. 

\

\noindent \textit{Detection of full dimensionality}.
 We consider here a simple illustration of the use of Theorem 1 and the associated algorithm.  In each case, we draw 200 samples of sizes $n=$ 
50, 100, 200, 300, 400, 500, 1000, 2000, 5000, 10000 on the 
$A$-parallel set around  the unit sphere, $\partial \mathcal{B}(0,1)\subset\mathbb{R}^d$;  that is, the sample data are selected  on 
 $\mathcal{B}(0,1+A)\setminus \mathring{\mathcal{B}}(0,1-A)$.  The width parameter $A$ takes the values 
$A=0,0.01,0.05,0.1,\ldots,0.05$. 
Table \ref{table:sphere} provides the minimum sample sizes to ``safely decide'' the correct answer. This means to correctly decide on, at least 190 out of 200 considered samples, that the support is lower dimensional (in the case $A=0$)
or that it  is full dimensional (cases with $A>0$).

We have used the boundary balls procedure (here and in the denoising experiment below for $A=0$) with $r=2\max_i(\min_{j\neq i}\Vert X_j-X_i\Vert)$.

The results look quite reasonable: the larger the dimension $d$ and the smaller the width parameter $A$, the harder the detection problem. 

\begin{table}[ht]
	\begin{center}	
		\begin{tabular}{c|ccc}
			$A$	    & $d=2$ & $d=3$ & $d=4$ \\
			\cline{2-4} 
			0	    & $\leq 50$ & $\leq 50$ & $\leq 50$ \\ 
			0.01	& $[51,100]$ & $[1001, 2000]$  & $>10000$ \\ 
			0.05	& $\leq 50$  & $[201, 300]$ & $[1001, 2000]$ \\ 
			0.1	    & $\leq 50$  & $[51, 100]$ & $[101, 200]$ \\ 
			0.2	    & $\leq 50$  & $\leq 50$ & $[51, 100]$ \\ 
			0.3	    & $\leq 50$  & $\leq 50$ & $[51, 100]$  \\ 
			0.4	    & $\leq 50$  & $\leq 50$  & $\leq 50$  \\ 
			0.5	    & $\leq 50$  & $\leq 50$  & $\leq 50$
		\end{tabular}
		\footnotesize\caption{Minimum sample sizes required to detect 
			lower dimensionality for different values of the dimension $d$ and the width parameter $A$.}\label{table:sphere}
	\end{center}
\end{table}

\

\noindent \textit{Denoising}.
We draw points on  
$\mathcal{B}(0,1.3)\setminus\mathring{\mathcal{B}}(0,0.7)$  in $\mathbb{R}^2$ and $\mathbb{R}^3$.

In order to evaluate the effectiveness of the denoising procedure we define  the random variable $e=\Vert Y\Vert-1$ from the denoised data $Y$ and also from the original data. Note that the ``perfect'' denoising would correspond to $e=0$. The Figure \ref{figdensity} shows the kernel
estimators of both densities of $e$ for the case $d=2$ (left panel) and for $d=3$ (right panel). These estimators for the denoised case are based on $m=100$ values of $e$ extracted from samples of sizes $n=$100, 1000, 10000. The density estimators for the initial distribution are based on samples of size 100.  Clearly, when the denoised sample of size $m=100$ is based on a very large sample, with $n=10000$, the denoising process is better, as suggested by the fact that the corresponding density estimators are strongly concentrated around 0. The slight asymmetry in the three dimensional case, accounts for the fact that the ``external'' volume $\mathcal{B}(0,1.3)\setminus\mathcal{B}(0,1)$ is larger than the ``internal'' one 
$\mathcal{B}(0,1)\setminus\mathcal{B}(0,0.7)$.

\begin{figure}[ht]
	\begin{center}
		\includegraphics[height=5cm,width=7cm]{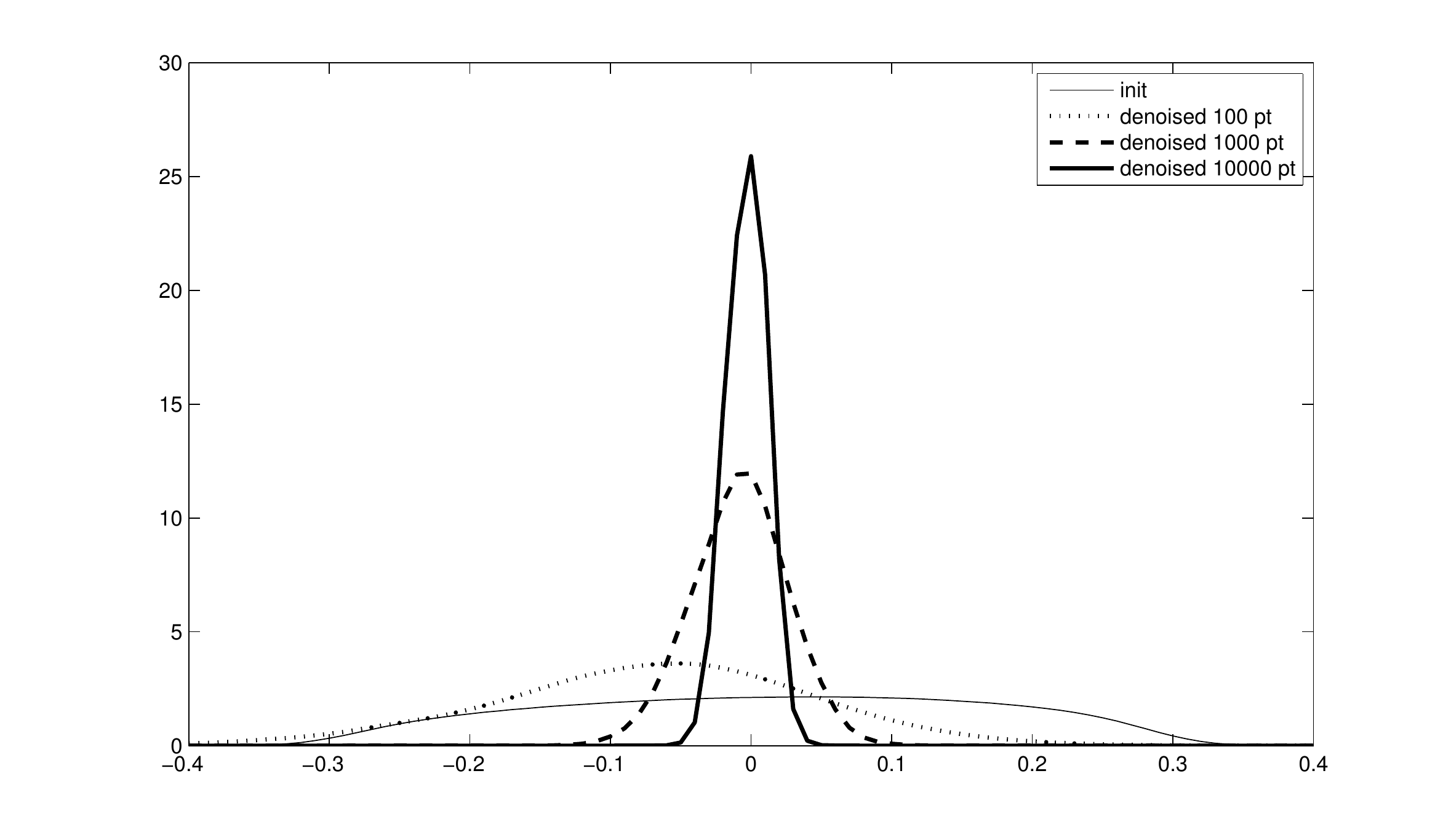}
		\includegraphics[height=5cm,width=7cm]{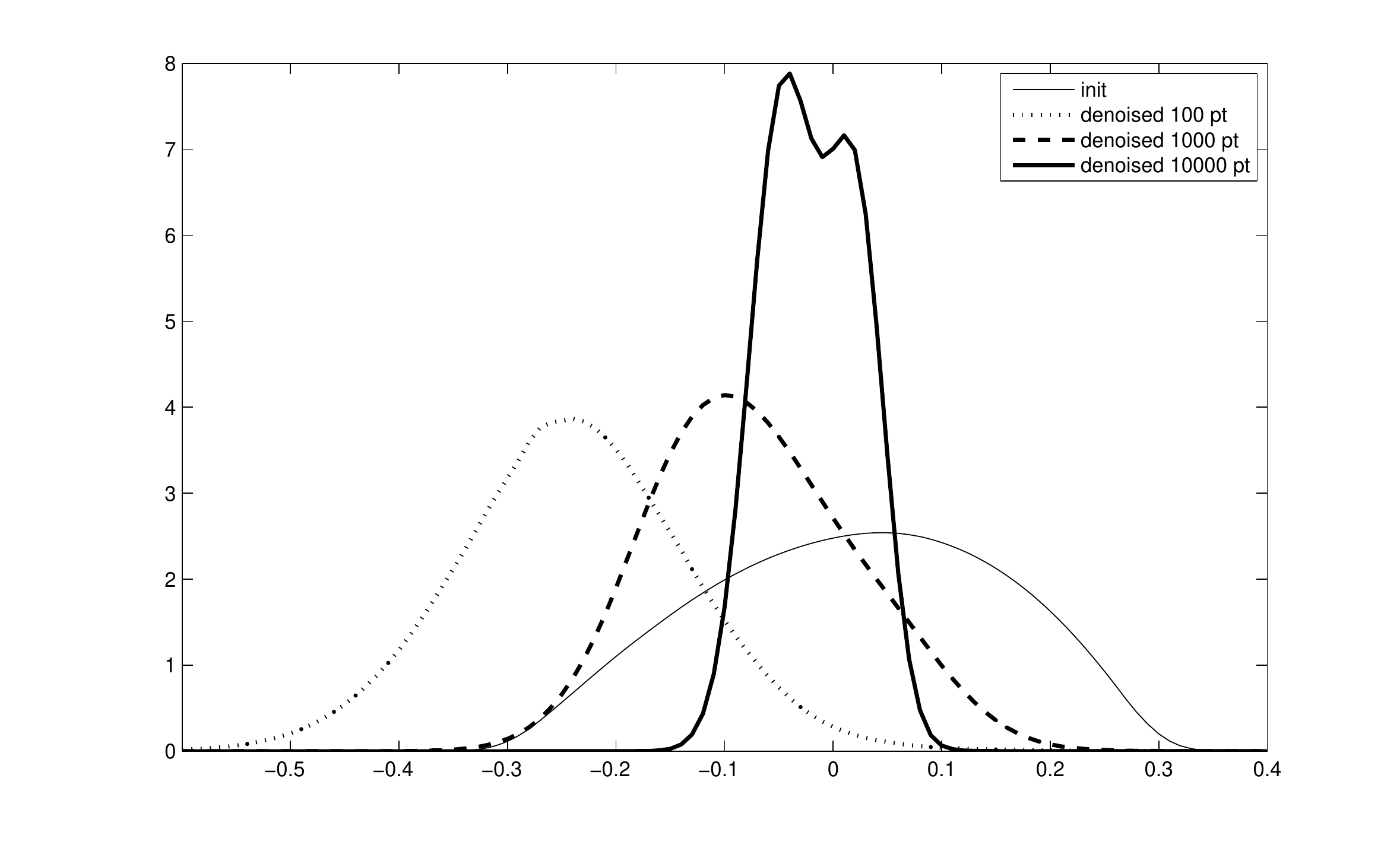}
	\end{center}
	\caption{Estimated density functions of the random variable $e=\Vert Y\Vert-1$ for $d=2$ (left) and $d=3$ right, with and without denoising.}
	\label{figdensity}
\end{figure}

Figures \ref{figrsquare} and \ref{figtref} provide a more visual idea on the result of the denoising algorithm. They correspond, respectively,   to the set $\mathcal{B}(S_{L_3},0.3)$ (where $S_{L_3}=\{(x,y),|x|^3+|y|^3=1\}$)
and to $\mathcal{B}(T,0.3)$, where $T$ is  the so-called \textit{Trefoil Knot}, a well-known curve with interesting topological and geometric properties.

\begin{figure}[ht]
	\begin{center}
		\includegraphics[height=6cm,width=6cm]{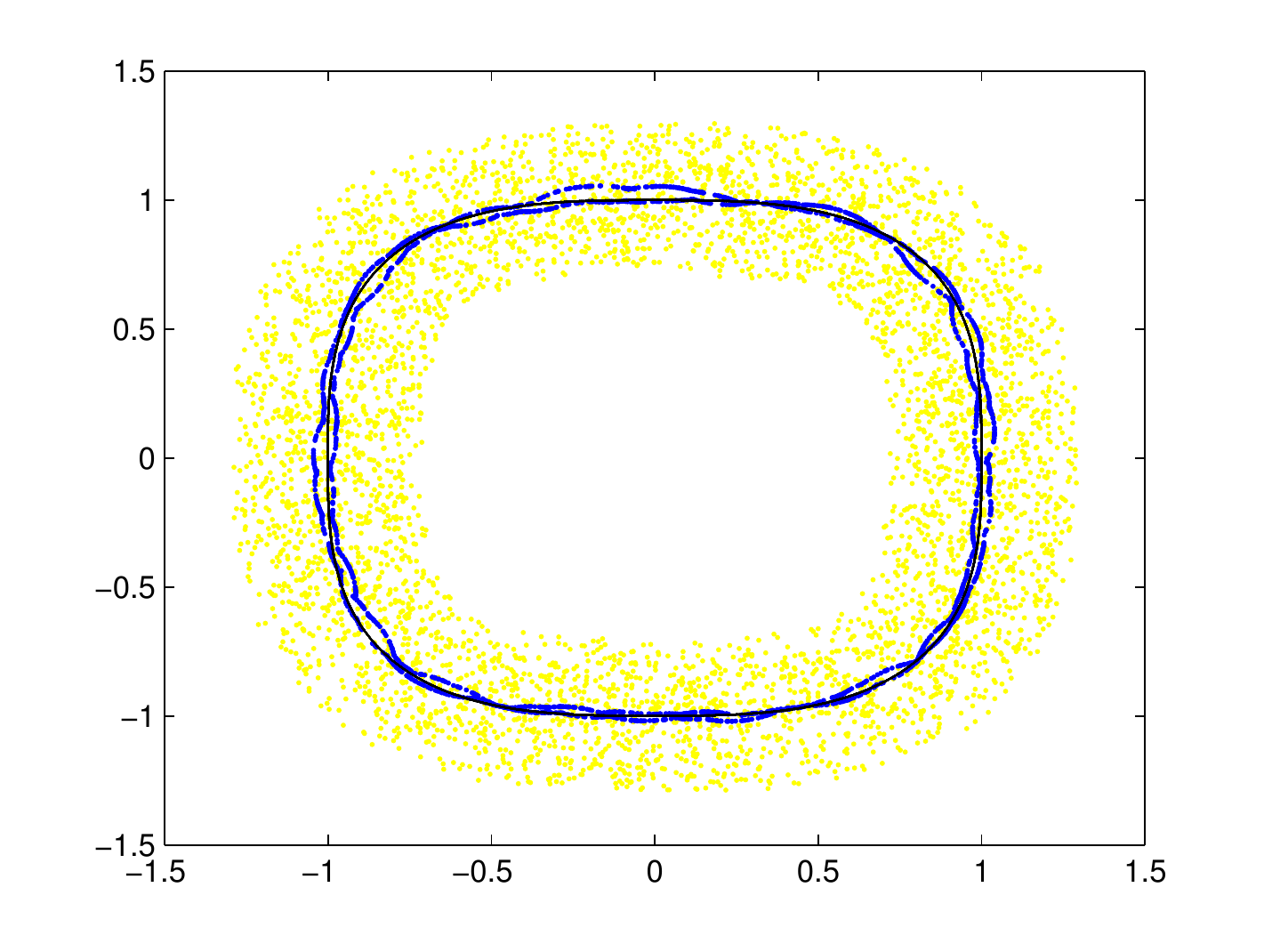}
		\includegraphics[height=6cm,width=6cm]{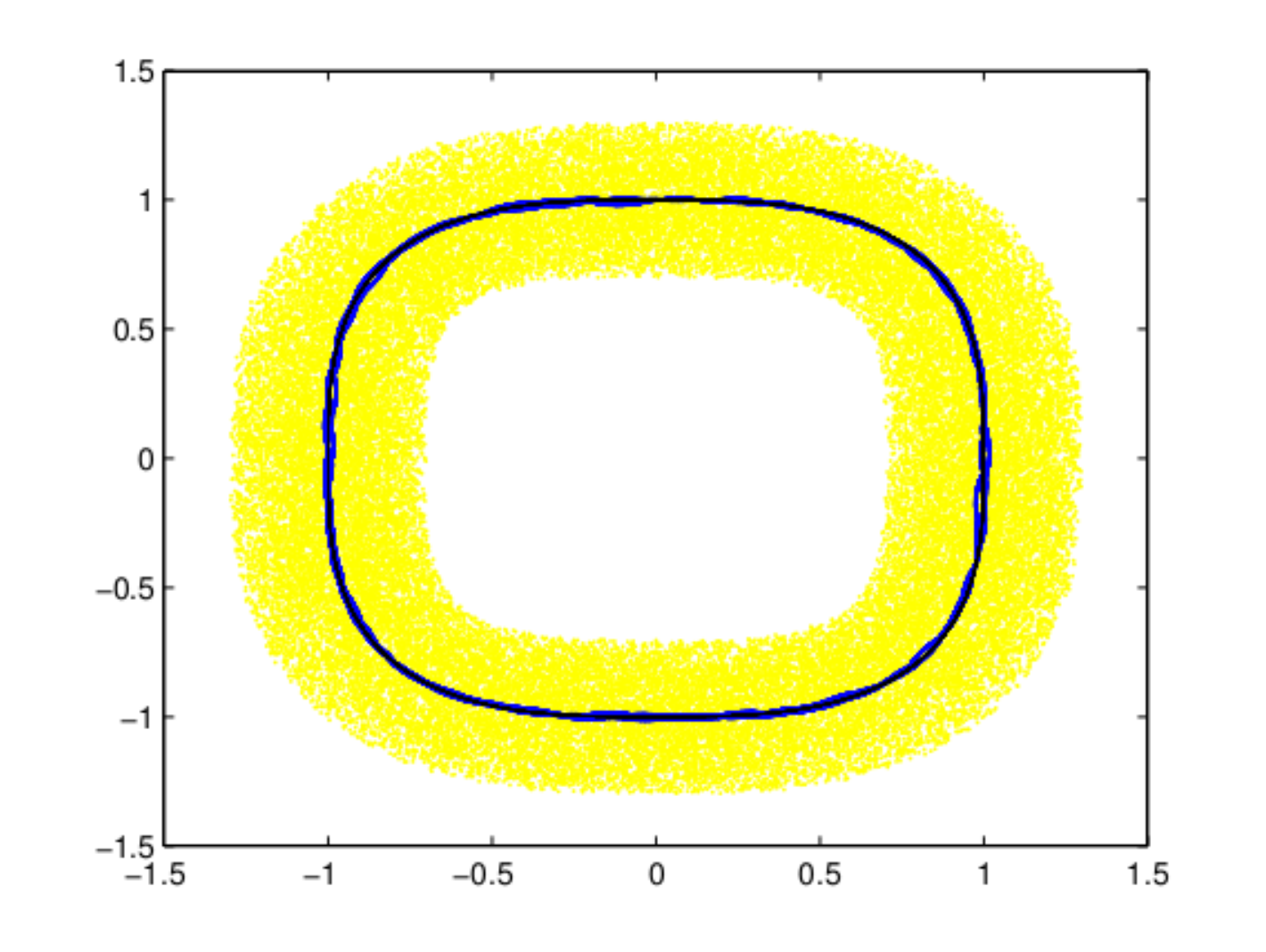}
	\end{center}
	\caption{The yellow background is made of $5000$ points (left) and $50000$ points (right) drawn on on $\mathcal{B}(S_{L_3},0.3)$, with $S_{L_3}=\{(x,y),|x|^3+|y|^3=1\}$. The blue points are the result
		of the denoising process. The black line corresponds to the original set $S_{L_3}$}
	\label{figrsquare}
\end{figure}

\begin{figure}[ht]
	\begin{center}
		\includegraphics[height=10cm,width=6cm]{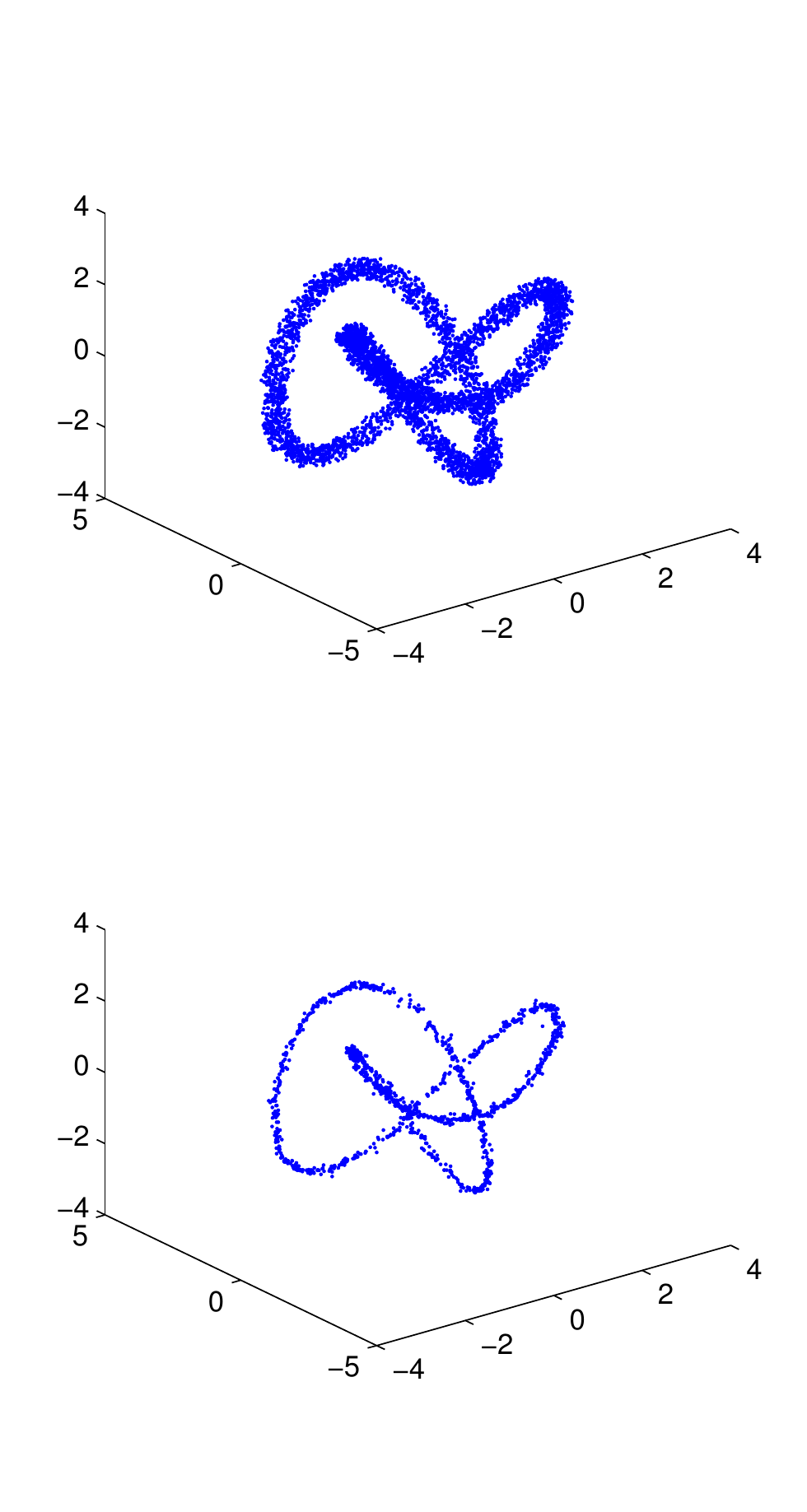}
		\includegraphics[height=10cm,width=6cm]{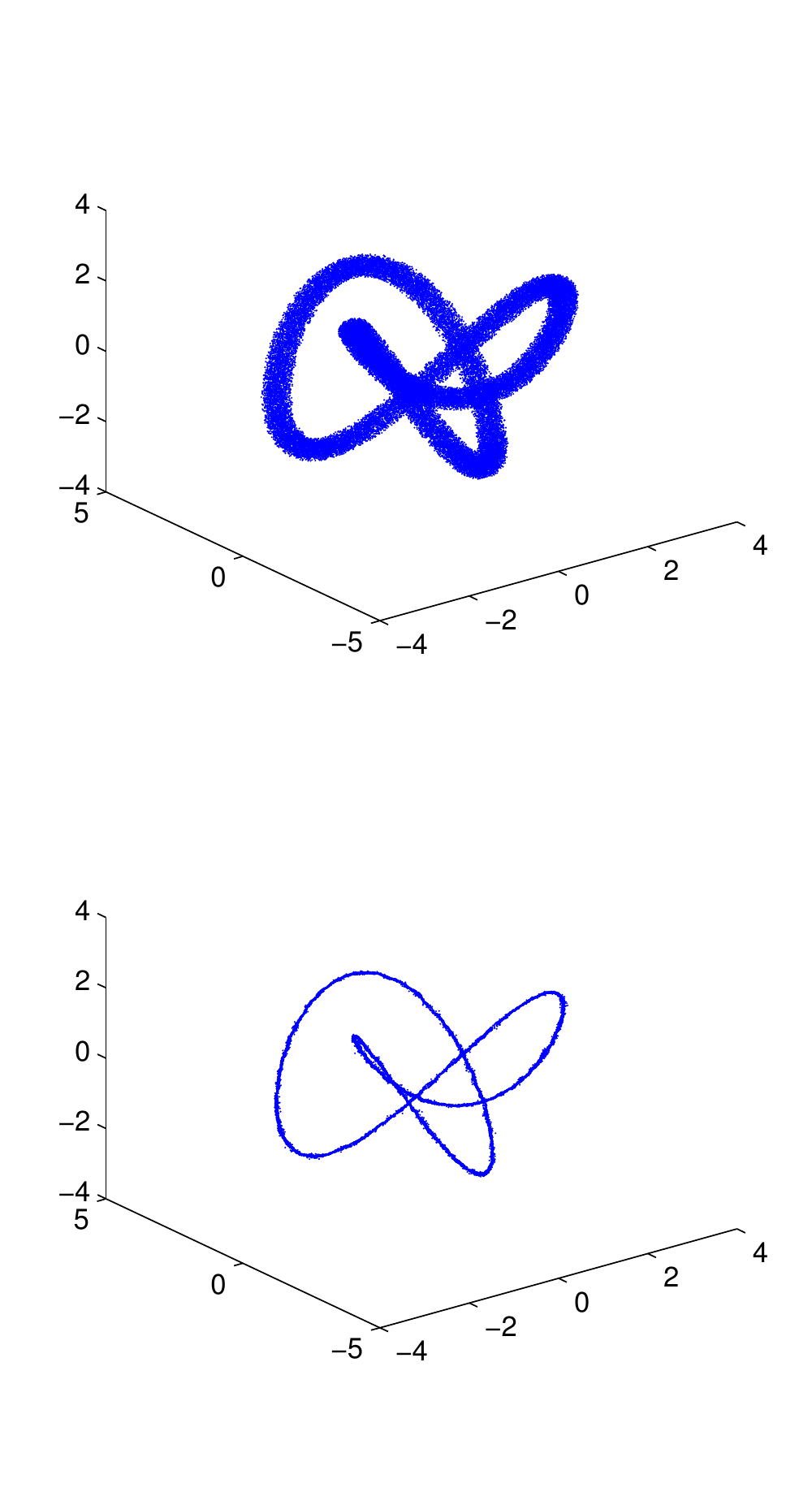}	 		
	\end{center}
	\caption{The upper panel shows $5000$ noisy points (left) and $50000$ noisy points (right) drawn on $\mathcal{B}(T,0.3)$. The lower panel shows the result of the corresponding denoising process.}
	\label{figtref}
\end{figure}

\newpage

\noindent \textit{Minkowski contents estimation}.
Finally in Table \ref{table:minkowski} we show, just as a tentative experiment, some results about the Minkowki contents estimation, again in the case of noiseless data
($R_1=0$) and noisy points (with $R_1$=0.2) drawn around a sphere  for different values for $n$
and different dimensions.  

\begin{table}[ht]
	\begin{center}	
		\begin{tabular}{|c|cccc|}
			\hline
			&$n=10^3$&$n=10^4$ & $n=10^5$ & $n=10^6$\\
			\hline
			$d=2$ &	$0.11$ 	&$0.1$	&$0.08$	&$0.07$\\
			$d=3$ &	$0.14$	&$0.14$&$0.13$&$0.12$\\
			$d=4$ &	$0.16$	&$0.16$&$0.16$&$0.15$\\
			\hline
			
		\end{tabular}
		\footnotesize\caption{Radius $r_0(n,d)$ for Minkowski contents estimation when $R_1=0.2$}
		\label{table:rad}
	\end{center}
\end{table}

\begin{table}[ht]
	\begin{center}	
		\begin{tabular}{|cc|cccc|}
			\hline
			$d$ &	$R_1$ &	 $n=10^3$&$n=10^4$ & $n=10^5$ & $n=10^6$\\
			\hline
			$2$ &	$0$  &	$0.38$ 	&$0.34$	&$0.32$	&$0.3$\\
			$2$ &	$0.2$ &	$27.29$	&$11.79$&$5.33$&$2.37$\\
			\hline
			$3$ &	$0$ &	$4$	&$0.88$	&$0.37$	&$0.32$\\
			$3$ &	$0.2$ &	$37.03$	&$28.76$&$19.95$&$11.35$\\
			\hline
			$4$ &	$0$  &	$16.1$	&$4.34$	&$1.23$	&$0.45$\\
			$4$ &	$0.2$ &	$91.37$	&$54.85$&$26.69$&$25.88$\\
			\hline
			
		\end{tabular}
		\footnotesize\caption{Relative errors (in percentage) for Minkowski contents estimation}
		\label{table:minkowski}
	\end{center}
\end{table}

For every $R_1,n,d$ we estimate the Minkowski contents 
using a radius $r=0.5\break \sqrt{\max_i (\min_{j\neq i}\|X_i-X_j\|)}$  (see Theorem \ref{th:choosern})  when $R_1=0$ and with a 
a deterministic radius $r=r_0(n,d)$ ( slowly decreasing with the dimension,  see Table \ref{table:rad}) when $R_1=0.2$. 
The values of the estimators have been calculated via a Monte Carlo Method based on $10^5$ points uniformly drawn on $\mathcal{B}(0,1+2r)\setminus \mathring{\mathcal{B}}(0,1-2r)$.
For every $R_1,n,d$ the experiment has been done $100$ times. Table \ref{table:minkowski} entries provide
the average relative error (in percentage) in the estimation of the boundary Minkowski contents $L$.
That is, the entries are $100\cdot err(R_1,d)$ where 
$err(R_1,d)=\frac{1}{L}\sqrt{\sum_i (L_{i}(R_1,d)-L)^2/100}$, $L$ being the correct value of the boundary length in each case, that is $L=2\pi$, $4\pi$, $2\pi^2$, for $d=2,3,4$, respectively.

 Even if we disregard the intrinsic difficulties associated with the Monte Carlo approximation,  the outputs of Table \ref{table:minkowski} suggest that the denoising-based methodology for the estimation of the Minkowski content from noisy observations, is not accurate for large dimensions. Note however that the problem is intrinsically difficult, as shown by the convergence rates obtained in the noiseless case. Note also that the noise level $R_1=0.2$ is quite large, especially for $d=3,4$.   In any case, the results displayed in Figure \ref{figtref} suggest a quite reasonable performance of the denoising procedure, for other descriptive or image analysis purposes. Clearly, more research would be needed to reach more definitive conclusions.

\newpage

\section*{Acknowledgements} This research has been partially supported by MATH-AmSud grant
16-MATH-05 SM-HCD-HDD (C. Aaron and A. Cholaquidis) and Spanish grant MTM2016-78751-P (A. Cuevas).  We are grateful to Luis Guijarro and Jes\'us Gonzalo (Dept. Mathematics, UAM, Madrid) for useful conversations and advice.

\end{document}